\theoremstyle{plain}
\newtheorem*{theorem*}{Theorem}
\newtheorem*{remark*}{Remark}
\newtheorem*{example*}{Example}
\newtheorem{lemma}{Lemma}[subsection]
\newtheorem{proposition}[lemma]{Proposition}
\newtheorem{corollary}[lemma]{Corollary}
\newtheorem{theorem}[lemma]{Theorem}
\newtheorem*{conjecture*}{Conjecture}
\newtheorem{introtheorem}{Theorem}
\newtheorem{introcorollary}[introtheorem]{Corollary}
\theoremstyle{definition}
\newtheorem{definition}[lemma]{Definition}
\theoremstyle{remark}
\newtheorem{remark}[lemma]{Remark}
\newcommand{\Hom}{\operatorname{Hom}}
\newcommand{\triv}{{\mathbbm 1}}
\newcommand{\id}{\operatorname{Id}}
\renewcommand{\Im}{\operatorname{Im}}
\newcommand{\Ind}{\operatorname{Ind}}
\newcommand{\Ker}{\operatorname{Ker}}
\newcommand{\Ext}{\operatorname{Ext}}
\newcommand{\Aut}{{\operatorname{Aut}}}
\newcommand{\End}{\operatorname{End}}
\newcommand{\Ob}{\operatorname{Ob}}
\newcommand{\Set}{\mathtt{Set}}
\newcommand{\Com}{\mathtt{Com}}
\newcommand{\Grps}{\mathtt{Grps}}
\newcommand{\cA}{\mathcal{A}}
\newcommand{\cC}{\mathcal{C}}
\newcommand{\cD}{\mathcal{D}}
\newcommand{\cT}{\mathcal{T}}
\newcommand{\cV}{\mathcal{V}}
\newcommand{\ex}{\mathit{ex}} 
\newcommand{\faith}{\mathit{faith}} 
\newcommand{\fg}{\mathfrak{g}}
\newcommand{\fX}{\mathfrak{X}}
\newcommand{\fY}{\mathfrak{Y}}
\newcommand{\fZ}{\mathfrak{Z}}
\newcommand{\Fun}{\mathrm{Fun}}
\newcommand{\Map}{\operatorname{Map}}
\newcommand{\Mod}{\operatorname{Mod}}
\newcommand{\sVect}{\mathtt{sVect}}
\newcommand{\InnaA}[1]{{#1}}
\newcommand{\InnaB}[1]{{#1}}
\newcommand{\InnaC}[1]{{#1}}
\newcommand{\InnaD}[1]{{#1}}
\def\quotient#1#2{%
    \raise1ex\hbox{$#1$}\Big/\lower1ex\hbox{$#2$}%
}
\begin{document}
 \renewcommand{\v}{\varepsilon} \newcommand{\p}{\rho}
\newcommand{\m}{\mu}

\def\Pic{{\bf Pic}}
\def\re{{\bf re}}
\def\e{{\bf e}}
\def\a{\alpha}
\def\ve{\varepsilon}
\def\b{\beta}
\def\D{\Delta}
\def\d{\delta}
\def\ad{\operatorname{ad}}
\def\f{{\varphi}}
\def\ga{{\gamma}}
\def\L{\Lambda}
\def\lo{{\bf l}}
\def\s{{\bf s}}
\def\A{{\bf A}}
\def\B{{\bf B}}
\def\cB{{\mathcal {B}}}
\def\C{{\mathbb C}}
\def\F{{\bf F}}
\def\G{{\mathfrak {G}}}
\def\g{{\mathfrak {g}}}
\def\gl{{\mathfrak {gl}}}
\def\tg{{\tilde{\mathfrak {g}}}}
\def\b{{\mathfrak {b}}}
\def\q{{\mathfrak {q}}}
\def\f{{\mathfrak {f}}}
\def\k{{\mathfrak {k}}}
\def\l{{\mathfrak {l}}}
\def\m{{\mathfrak {m}}}
\def\n{{\mathfrak {n}}}
\def\o{{\mathfrak {o}}}
\def\p{{\mathfrak {p}}}
\def\s{{\mathfrak {s}}}
\def\t{{\mathfrak {t}}}
\def\r{{\mathfrak {r}}}
\def\z{{\mathfrak {z}}}
\def\h{{\mathfrak {h}}}
\def\H{{\mathcal {H}}}
\def\O{\Omega}
\def\M{{\mathcal {M}}}
\def\T{{\mathbb {T}}}
\def\N{{\mathbb {N}}}
\def\U{{\mathcal {U}}}
\def\Z{{\mathbb Z}}
\def\P{{\mathcal {P}}}
\def\GVM{ GVM }
\def\iff{ if and only if  }
\def\add{{\rm add}}
\def\ld{\ldots}
\def\vd{\vdots}
\def\mod{{\rm mod}}
\def\len{{\rm len}}
\def\cd{\cdot}
\def\dd{\ddots}
\def\q{\quad}
\def\qq{\qquad}
\def\ol{\overline}
\def\tl{\tilde}
\def\xxRep{\text{$\underline{\operatorname{Rep}}$}}
\def\xxrep{\text{$\overline{\operatorname{Rep}}$}}
\def\Rep{\operatorname{\xxRep}}
\def\rep{\operatorname{\xxrep}}
\def\rp{\operatorname{Rep}}
\def\tV{\tilde{V}}
\def\tT{\tilde{T}}
\def\tX{\tilde{X}}
\def\tY{\tilde{Y}}

 \title{Deligne categories and the limit of categories $\rp(GL(m|n))$}
\author{ Inna Entova-Aizenbud, Vladimir Hinich and Vera Serganova }

\address{ Dept. of Mathematics, Hebrew University of Jerusalem, Givat Ram,
Jerusalem 91904, Israel.} 
\email{inna.entova@mail.huji.ac.il}
\address{ Dept. of Mathematics, University of Haifa, Mount Carmel, Haifa 31905, Israel.} 
\email{hinich@math.haifa.ac.il}
\address{ Dept. of Mathematics, University of California at Berkeley,
Berkeley, CA 94720.} 
\email{serganov@math.berkeley.edu}

\begin{abstract}
For each integer $t$ a tensor category $\cV_t$ is constructed, such that exact tensor functors $\cV_t\rightarrow \cC$ classify dualizable $t$-dimensional objects
in $\cC$ not annihilated by any Schur functor. This means that $\cV_t$ is the ``abelian envelope'' of the Deligne category $\cD_t=\rp(GL_t)$. Any tensor functor $\rp(GL_t)\longrightarrow\cC$ is proved to factor either through $\cV_t$ or through
one of the classical categories $\rp(GL(m|n))$ with $m-n=t$. The universal property
of $\cV_t$ implies that it is equivalent to the categories
 $\rp_{\cD_{t_1}\otimes\cD_{t_2}}(GL(X),\epsilon)$, ($t=t_1+t_2$, $t_1$ not integer) suggested by Deligne as candidates for the role of abelian envelope. 
\end{abstract}

\maketitle
\setcounter{tocdepth}{3}

\section{Introduction}
%
%

%
%
\subsection{}
In this paper we construct, for each integer $t$, a tensor category $\mathcal{V}_t$ satisfying a remarkable universal property: given a tensor category $\mathcal{C}$, the exact tensor functors $\mathcal{V}_t \rightarrow \mathcal{C}$ classify the $t$-dimensional objects in $\mathcal{C}$ not annihilated by any Schur functor. 


In \cite{DM}, Deligne and Milne constructed a family of rigid symmetric monoidal categories $\rp(GL_t)$ (denoted $\cD_t$ in this paper), parameterized by $t \in \mathbb{C}$; for $t$ non-integer, these are semisimple tensor categories satisfying the mentioned above universal property\footnote{The condition on Schur functors is void in this case.}.

For $t\in\Z$ the category $\cD_t$ is Karoubian but not abelian. The category $\mathcal{V}_t$ is abelian; it is built of pieces of categories of representations of Lie supergroups $GL(m|n)$ with $m-n = t$, and $\cD_t$ admits an embedding into $\mathcal{V}_t$ as a full rigid symmetric monoidal subcategory.

\subsection{}

%
Representation theory has evolved from the study of representations of groups by matrices to the study of the categories of representations of groups and, more generally supergroups. The Tannaka reconstruction theory allows one to recover the original (super) group from the category of its finite-dimensional representations, and Deligne's results (\cite{D1, D2}) give criteria on a tensor category to be the category of representations of a (super) group.

Not all tensor categories are categories of representations of groups or supergroups; obviously, a category with objects of non-integer dimensions are not such. In the category of finite dimensional representations of a group, the objects will obviously have non-negative integral dimensions, while for the supergroups, this dimension might be negative, but will still be an integer. 


In \cite{DM}, \cite{D}, Deligne and Milne constructed families of rigid symmetric monoidal categories $\rp(GL_t), \rp(Osp_t), \rp(S_t)$ ($t \in \mathbb{C}$) whose objects have not necessarily integral dimension. These categories possess some nice universal proerties in the $2$-category of symmetric monoidal categories. Additionally, their explicit description reflects many features of the classical representation theory.

%

The above categories are all Karoubian; they are semisimple at generic values of the parameter $t$ (in particular, when $t \notin \mathbb{Z}$).

When $t\in \mathbb{Z}$ ($t \in \mathbb{Z}_{\geq 0}$ for the family $\rp(S_t)$), the categories in question are not abelian; for $t=n \in \mathbb{Z}_{\geq 0}$, they admit a symmetric monoidal (SM for short) functor to the classical categories $\rp(GL_n), \rp(O_n), \rp(S_n)$, so that the classical categories appear as quotients of the respective Deligne categories.

On the other hand, there exist faithful SM functors from the 
Deligne categories at special values of $t$ to tensor (abelian) categories.  
Deligne conjectured that for each family and each value of $t$, there exists a
universal tensor category admitting an embedding of the Deligne category, and
suggested a construction of such a tensor category. In the case of the family
$\rp(S_t)$, this conjecture was proved by Comes and Ostrik in \cite{CO}. They also
gave an alternative construction of the tensor category using $t$-structures, which
leads to a rather explicit description of the abelian envelope of $\rp(S_t)$ (see \cite{En}).

\subsection{}
From now on, we will concentrate on the family $\rp(GL_t)$ which we denote $\cD_t$ for the sake of simplicity. 
The starting point of the present paper is the study of singular support for representations of Lie superalgebras, due to Duflo and Serganova, \cite{DS}. 

Let $\g$ be a Lie superalgebra, and let $x\in\g$ be an odd element satisfying $[x,x]=0$. One can define
a functor $\mathcal H$ from the category of $\g$-modules to the category of $\g_x$-modules, where $\g_x:=\Ker\ad_x/\Im \ad_x$. This functor sends a $\g$-module $M$ to
$M_x:=\Ker x/\Im x$. It is easy to see that this functor is symmetric monoidal.

We will apply this construction to $\g=\mathfrak{gl}(m|n)$ and $x$ an 
$(m+n)\times (m+n)$ matrix with $1$ in the upper-right corner and zero elsewhere. Then
$\g_x$ is isomorphic to $\mathfrak{gl}(m-1|n-1)$. 

This yields a collection of SM functors $$ \mathcal H: \rp(\mathfrak{gl}(m|n)) \longrightarrow \rp(\mathfrak{gl}(m-1|n-1))$$
The functors $\mathcal H$ are not exact, but are exact on certain subcategories of $\rp(\mathfrak{gl}(m|n))$. This allows us to construct a new  tensor category 
$\cV_t$, $t:=m-n$, together with a collection of SM functors $F_{m, n}: \cV_t \longrightarrow \rp(\mathfrak{gl}(m|n))$ which are compatible with the 
functors $ \mathcal H$. Note that the SM functors $F_{m, n}$ are not exact.

The category $\cV_t$ should be seen as an inverse limit of the system $(\rp(\mathfrak{gl}(m|n)), \mathcal H)$.

\subsection{}
The category $\cV_t$ is the main object of our study. We study the properties of this category using a variety of tools from the representation theory of the Lie supergroup $GL(m|n)$ and the Lie algebra $\gl(\infty)$ (see \cite{DPS}). We give a description of the isomorphism classes of simple objects in $\cV_t$, classify the blocks of this category, and show that $\cV_t$ is a union of highest weight categories.


%
The category $\cV_t$ admits a distinguished object $V_t$ of dimension $t$ coming from the standard representation $\mathbb{C}^{m|n}$ of $\mathfrak{gl}(m|n)$ ($m-n = t$). One has $F_{m,n}(V_t)=\C^{m|n}$ for all $m,n$.

Since the category $\cD_t$ is freely generated as a Karoubian symmetric monoidal category by an object of dimension $t$, there is a canonical SM functor $I: \cD_t  \longrightarrow \cV_t$. We prove that this functor is fully faithful (see Proposition \ref{prop:deligneconnection}). 


The tensor category $\cV_t$ enjoys the following remarkable universal property (see Theorems \ref{Mthm:uni-prop}, \ref{Mthm:uni-2}). 
\begin{introtheorem}\label{introthrm:universal}
 Let $\mathcal{C}$ be \InnaC{a tensor} category with an object $C$ of integral dimension $t$ which is not annihilated by any Schur functor. There exists an essentially unique exact SM functor $\cV_t \longrightarrow \mathcal{C}$ carrying $V_t$ to $C$. 
\end{introtheorem}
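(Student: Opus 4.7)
The plan is to extract from the data $(\cC, C)$ an SM functor $F_0: \cD_t \to \cC$ using the fact that $\cD_t$ is the free Karoubian rigid symmetric monoidal category on a $t$-dimensional dualizable object, then extend $F_0$ along the fully faithful embedding $I: \cD_t \hookrightarrow \cV_t$ (Proposition~\ref{prop:deligneconnection}) to obtain the desired exact SM functor $F: \cV_t \to \cC$ sending $V_t$ to $C$.

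The core step is to extend $F_0$ across $I$. A natural route is to combine it with the dichotomy promised in the abstract: any SM functor $\cD_t \to \cC$ factors either through $I$ or through a quotient $\cD_t \twoheadrightarrow \rp(GL(m|n))$ for some $m, n$ with $m - n = t$. To rule out the latter possibility under our hypotheses, observe that in $\rp(GL(m|n))$ the standard module $\C^{m|n}$ is annihilated by the Schur functor attached to any partition lying outside the $(m,n)$-hook; hence factoring through $\rp(GL(m|n))$ would force $C$ itself to be killed by such a Schur functor, contradicting the assumption. Consequently $F_0$ must factor through $\cV_t$, yielding the SM functor $F$.

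The remaining work is verifying exactness of $F$ and uniqueness. For exactness I would use the structure of $\cV_t$ as a union of highest weight categories established earlier in the paper: it suffices to check that the short exact sequences relating simple, standard and costandard objects are sent to short exact sequences in $\cC$. Since every standard $\Delta_\lambda$ and costandard $\nabla_\lambda$ can be written as the image or kernel of an explicit morphism between objects of $I(\cD_t)$ built from Schur functors applied to $V_t$ and its dual, these descriptions transport to $\cC$ via $F_0$; the non-annihilation hypothesis on $C$ guarantees that no such kernel collapses unexpectedly in $\cC$, so the required exactness survives. Uniqueness is then immediate: the object $V_t$ generates $\cV_t$ as an abelian symmetric monoidal category, so any two exact SM functors agreeing on $V_t$ must be naturally isomorphic.

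The chief obstacle is the factorization dichotomy itself, or equivalently the direct construction of the extension through the highest weight structure of $\cV_t$; this is where the classification of blocks of $\cV_t$ and the precise vanishing behaviour of Schur functors on $\C^{m|n}$ play the decisive role. The Schur hypothesis appears at exactly one moment, but it enters essentially: it is the unique mechanism that forbids $F_0$ from collapsing onto a classical supergroup quotient, and simultaneously the mechanism that keeps the standard-costandard resolutions non-degenerate in $\cC$.
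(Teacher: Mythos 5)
Your proposal has a circularity problem at its core: the factorization dichotomy you invoke — ``any SM functor $\cD_t \to \cC$ factors either through $I$ or through a quotient $\cD_t\twoheadrightarrow \rp(GL(m|n))$'' — is precisely Theorem~\ref{introthrm:Deligne_conj}, and the paper \emph{derives} that dichotomy from the universal property you are trying to prove (via Theorem~\ref{Mthm:uni-prop} and the Tannakian formalism in Section~\ref{sec:Deligne_conj}). You cannot use it as input. The paper instead constructs the extension $U:\cV_t\to\cC$ directly, bypassing any dichotomy, using three intrinsic properties of the embedding $I:\cD_t\hookrightarrow\cV_t$: it is fully faithful (Proposition~\ref{prop:deligneconnection}); every object of $\cV_t$ is the image of a map $I(f)$ for some $f$ in $\cD_t$ (Proposition~\ref{prop:presentation}); and every epimorphism in $\cV_t$ splits after tensoring with $I(T)$ for some nonzero $T\in\cD_t$ (Proposition~\ref{prp:splitting-ses}). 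The Schur-functor hypothesis enters through Lemma~\ref{Mlem:preexact}: faithfulness of $F_0$ together with the splitting property forces $F_0$ to send deflations to epimorphisms and inflations to monomorphisms (the nonzero object $F_0(T)$ is faithfully flat, so a surjection after tensoring with it was already a surjection). Once $F_0$ is pre-exact, the extension $U$ is defined by sending $M$ to the image of $F_0(f)$ where $I(f)$ presents $M$; well-definedness and functoriality are established by showing the ``category of presentations'' is nonempty with connected nerve (Proposition~\ref{prop:C-connected}), and uniqueness of the extension likewise falls out of this bookkeeping, not merely from ``$V_t$ generates.''

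Your suggested fallback via the highest weight structure — checking exactness on short exact sequences of standards, costandards, simples — is closer in spirit, since those objects do admit presentations from $I(\cD_t)$. But your phrase ``the non-annihilation hypothesis guarantees that no such kernel collapses unexpectedly'' is exactly the gap: the mechanism by which that hypothesis controls kernels is the splitting lemma plus faithful flatness of nonzero objects in a tensor category, and you would still need to prove that the assignment on objects is functorial and independent of choices, which is where the bulk of the paper's Section~\ref{sec:proof_univ_prop} lives. Without identifying Propositions~\ref{prop:presentation} and~\ref{prp:splitting-ses} as the two load-bearing lemmas, the proposal does not close.
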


The proof of this theorem is based on two properties of the category $\cV_t$: 
\begin{itemize}
 \item Any object in $\cV_t$ can be presented as an image of an arrow $I(f)$, $f \in Mor(\cD_t)$.
 \item For any epimorphism $X\to Y$ in $\cV_t$ there exists a nonzero object $T\in\cD_t$
such that the epimorphism $X\otimes I(T)\to Y\otimes I(T)$ splits.
\end{itemize}

\subsection{}\label{introsec:abelian_env}
Another important result proved in this paper is connected to Deligne's philosophy 
of ``abelian envelope'': for special (that is, integral) values of $t$, one should not
expect to find a universal SM functor from $\cD_t$ to a tensor category $\mathcal{A}$
so that any SM functor $\cD_t \rightarrow \mathcal{B}$ to a tensor category
$\mathcal{B}$ factors through an exact SM functor 
$\mathcal{A} \longrightarrow \mathcal{B}$. Instead, one has a collection of such
functors $\cD_t$ to $\mathcal{A}_i$ so that any SM functor 
$\cD_t \rightarrow \mathcal{B}$ factors through one of $\mathcal{A}_i$. Among the
functors $\cD_t \longrightarrow \mathcal{A}_i$ only one is faithful, and it is called
 the abelian envelope of $\cD_t$.

If one considers the Deligne category $\rp(S_t)$ ($t = n \in \N$) instead of $\cD_t$, then it was shown in \cite{CO} that only two categories $\mathcal{A}_i$ appear: the classical category $\rp(S_n)$ and the abelian envelope ${\rp}^{ab}(S_{t=n})$. Here is our second result (see Theorem \ref{thrm:Deligne_conj}).

\begin{introtheorem}\label{introthrm:Deligne_conj} 
Let $\mathcal{T}$ be a \InnaC{tensor} category, and let $X$ be an object in $\mathcal{T}$
 of integral dimension $t$. Consider the canonical SM functor 
$$F_X:
\cD_t\longrightarrow \mathcal{T}
$$ carrying the $t$-dimensional generator of $\cD_t$ to $X$.
\begin{enumerate}[label=(\alph*)]
 \item If $X$ is not annihilated by any Schur functor then $F_X$ uniquely 
factors through the embedding $I:\cD_t\to\cV_t$ and gives rise to an exact SM functor $$\cV_t \longrightarrow \mathcal{T}$$
 sending $V_t$ to $X$.
 \item If $X$ is annihilated by some Schur functor then there exists a unique pair $m, n \in \mathbb{Z}_+$, $m-n=t$, such that $F_X$ factors through the SM functor $\cD_t\longrightarrow \rp(\gl(m|n))$ and gives rise to an exact SM functor 
 $$ \rp(\gl(m|n)) \longrightarrow \mathcal{T}$$
 sending the standard representation $\mathbb{C}^{m |n}$ to $X$.
\end{enumerate}

\end{introtheorem}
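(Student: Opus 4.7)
Part (a) will be immediate from Theorem \ref{introthrm:universal}: its hypotheses on $X$ are exactly those of (a), so it yields an essentially unique exact SM functor $G:\cV_t\to\mathcal{T}$ with $G(V_t)=X$. The composition $G\circ I:\cD_t\to\mathcal{T}$ is an SM functor sending the $t$-dimensional generator of $\cD_t$ to $X$, so by the universal property of $\cD_t$ as the free Karoubian rigid SM category on an object of dimension $t$ it must coincide with $F_X$. Uniqueness of the exact functor $\cV_t\to\mathcal{T}$ extending $F_X$ is part of Theorem \ref{introthrm:universal}.

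For part (b), the strategy is to invoke the classical characterization, due to Deligne, of dualizable objects in an abelian SM category (over $\mathbb{C}$) that are annihilated by some Schur functor. The input I plan to use is that such an $X$ carries an essentially unique ``super-rank'' $(m,n)\in\Z_{\geq 0}^2$ and, via super-Tannakian reconstruction, there is an essentially unique exact SM functor
$$\Phi:\rp(\gl(m|n))\longrightarrow\mathcal{T}$$
sending the standard representation $\mathbb{C}^{m|n}$ to $X$. Comparing categorical dimensions one has $m-n=\dim X=t$, which pins down the pair $(m,n)$ once $t$ is fixed. The desired factorization of $F_X$ is then a formal consequence: the composition $\cD_t\to\rp(\gl(m|n))\xrightarrow{\Phi}\mathcal{T}$ is an SM functor sending the generator of $\cD_t$ to $X$, and therefore coincides with $F_X$ by the universal property of $\cD_t$. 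Uniqueness of $(m,n)$ and of the resulting factorization both reduce to the uniqueness statement in Deligne's theorem.

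The main obstacle is the classical super-Tannakian input used in (b): one must verify that the pair $(m,n)$ is canonically attached to $X$ and that the resulting functor out of the honest abelian tensor category $\rp(\gl(m|n))$ (rather than some Karoubian model) is automatically exact. Granting this, the remaining points --- matching the distinguished objects, verifying compatibility with tensor and duality structure --- follow formally from the universal property of $\cD_t$ and of $\Phi$. A secondary thing to be careful about is the dichotomy itself: one should confirm that the two cases (a) and (b) are genuinely exclusive, i.e.\ that an object of the form $\mathbb{C}^{m|n}\in\rp(\gl(m|n))$ is always annihilated by some Schur functor, so that the ``not annihilated'' hypothesis of (a) rules out the super case of (b).
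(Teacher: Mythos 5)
Your part (a) is correct and matches the paper's proof: Theorem \ref{introthrm:universal} (in the form of Theorem \ref{Mthm:uni-prop}) gives the extension of $F_X$ along $I$, and uniqueness is built into that theorem. The paper in fact proves more (Theorem \ref{thrm:Deligne_conj}(a) gives an equivalence onto the tensor subcategory $\rp_\cT(GL(X),\epsilon)\subset\cT$), but for the version stated in the introduction your argument suffices.

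For part (b) you have the right framework but, as you yourself flag, you have not resolved the super-Tannakian input, and this is where the real work lies. Two specific issues. First, the aside ``comparing categorical dimensions one has $m-n=\dim X=t$, which pins down the pair $(m,n)$ once $t$ is fixed'' is wrong: $t=m-n$ has infinitely many solutions $(m,n)\in\Z_{\geq0}^2$, so $t$ alone determines nothing; $(m,n)$ must be extracted from $X$ itself. Second, and more substantively, you invoke ``super-Tannakian reconstruction'' to produce an exact functor $\rp(\gl(m|n))\to\cT$ sending $\C^{m|n}$ to $X$, but without saying which theorem applies and to which category. The paper's resolution is to pass to the tensor subcategory $\rp_\cT(GL(X),\epsilon)\subset\cT$ of subquotients of mixed tensor powers of $X$: by \cite[Proposition 0.5(ii)]{D2} the Schur-annihilation of $X$ propagates to every object of this subcategory, so by \cite[Theorem 0.6]{D2} the subcategory is super-Tannakian, i.e.\ admits a super-fiber functor $S:\rp_\cT(GL(X),\epsilon)\to\sVect$. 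The pair $(m,n)$ is then read off from $S(X)\cong\C^{m|n}$ (with $m-n=t$ as a consistency check), and the relative Tannakian reconstruction (Proposition \ref{prop:Delignes_fund_grps} and Corollary \ref{lem:equiv_relative_repr_cat}) gives the equivalence $\rp_\cT(GL(X),\epsilon)\simeq\rp(\gl(m|n))$; composing the inverse equivalence with the canonical exact SM forgetful functor $\rp_\cT(GL(X),\epsilon)\hookrightarrow\cT$ yields your $\Phi$. Finally, your concern about the exclusivity of the two cases is legitimate and is confirmed in the paper by the Lemma preceding Theorem \ref{thrm:Deligne_conj}: $F_X$ is faithful exactly when $X$ is not Schur-annihilated, so in case (b) no factorization through the faithful embedding $I:\cD_t\to\cV_t$ can exist.
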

This answers a question posed by Deligne in \cite[Section 10]{D}.
The proof of this theorem is based on Theorem \ref{introthrm:universal} (in the form of Theorem \ref{Mthm:uni-prop}) and on the Tannakian formalism as described by Deligne in \cite{D1}.  

We remind once again that when $t  \notin \mathbb{Z}$, the Deligne category $\cD_t$ is semisimple, and thus a tensor category. This clearly implies that for $t  \notin \mathbb{Z}$, $\cD_t$ satisfies a property similar to Theorem \ref{introthrm:universal} (the condition on Schur functors is void in this case). This was detailed by Ostrik in \cite[Appendix B]{D}.
 
%
%

As a corollary, we identify $\cV_t$ with the following category constructed by Deligne.
Let $t_1\in\C-\Z$ and let $t_2=t-t_1$.
The category $\cD_{t_1}\otimes\cD_{t_2}$ has a canonical object $X$ of dimension $t$;
One has
\begin{introcorollary}(see Corollary~\ref{cor:Deligne_conj1})
  For any $t_1 \in \C-\Z$, there is a unique canonical equivalence
 $$\cV_t\longrightarrow\rp_{\cD_{t_1}\otimes\cD_{t_2}}(GL(X), \epsilon)$$
 carrying $V_t$ to $X$.
\end{introcorollary}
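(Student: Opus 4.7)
The plan is to apply Theorem \ref{introthrm:Deligne_conj}(a) with the target $\mathcal{T}:=\rp_{\cD_{t_1}\otimes\cD_{t_2}}(GL(X),\epsilon)$ and then identify the resulting functor as an equivalence. First I would verify the hypotheses on $X\in\mathcal{T}$: by construction $X$ is a dualizable object of integral dimension $t=t_1+t_2$, so the only nontrivial check is that $X$ is not annihilated by any Schur functor. I expect this to follow by pulling back along the forgetful functor $\mathcal{T}\to \cD_{t_1}\otimes\cD_{t_2}$, where $X$ acquires a summand of the form (up to a twist) $V_{t_1}\boxtimes\triv$; since $t_1\notin\Z$, the category $\cD_{t_1}$ is semisimple and standard character computations show no nonzero Schur functor vanishes on $V_{t_1}$, whence none vanishes on $X$. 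Theorem \ref{introthrm:Deligne_conj}(a) then produces an essentially unique exact symmetric monoidal functor
\[
\Phi:\cV_t\longrightarrow \mathcal{T}
\]
sending $V_t$ to $X$; essential uniqueness is exactly the ``canonical'' part of the corollary.

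To see that $\Phi$ is an equivalence, I would argue by a symmetric application of universal properties. Deligne's construction should equip $\mathcal{T}$ with its own universal property mirroring Theorem \ref{introthrm:universal}: for any abelian symmetric monoidal category $\mathcal{C}$ carrying a dualizable $t$-dimensional object $C$ not annihilated by any Schur functor, there is an essentially unique exact SM functor $\mathcal{T}\to\mathcal{C}$ taking $X$ to $C$. Granting this, applying the two universal properties in opposite directions yields functors $\Phi$ and $\Psi$ whose composites agree with the identities by the uniqueness clauses, and hence $\Phi$ is an equivalence (and its inverse is unique up to unique isomorphism).

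The main obstacle is therefore establishing the universal property of $\mathcal{T}$ itself. I would attack this via the Tannakian formalism of \cite{D1}: given $(\mathcal{C},C)$ satisfying the Schur condition, one first builds an internal copy of $\cD_{t_1}\otimes\cD_{t_2}$ inside $\mathcal{C}$ (using Schur nonvanishing of $C$ together with the semisimplicity of $\cD_{t_i}$ for $t_i\notin\Z$, which rigidly identifies $\cD_{t_i}$ via its own universal property) through which the canonical functor $\cD_t\to\mathcal{C}$ factors, and then realizes $C$ as a representation of the internal affine group $GL(X)$ in this copy, compatibly with the parity sign $\epsilon$. Tannakian reconstruction internal to $\cD_{t_1}\otimes\cD_{t_2}$ then produces the required exact SM functor $\mathcal{T}\to\mathcal{C}$. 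Verifying both the existence of the internal embedding of $\cD_{t_1}\otimes\cD_{t_2}$ and its interaction with the sign $\epsilon$ (so that the constructed functor is genuinely symmetric monoidal) is the technical heart of the argument, and is where I expect the real work to lie.
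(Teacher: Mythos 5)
Your verification that $X$ is not annihilated by any Schur functor is fine (and in fact more self-contained than the paper's, which quotes Deligne's result that $\cD_t\to\cD_{t_1}\otimes\cD_{t_2}$ is fully faithful). The gap is in the step that is supposed to show $\Phi$ is an equivalence. You correctly identify the obstacle but the proposed route around it does not work, for a structural reason: the universal property you want $\cT=\rp_{\cD_{t_1}\otimes\cD_{t_2}}(GL(X),\epsilon)$ to satisfy — that for every $(\cC,C)$ with $C$ dualizable of dimension $t$ and Schur--nonvanishing there is an essentially unique exact SM functor $\cT\to\cC$ sending $X$ to $C$ — is exactly Deligne's conjecture about $\cT$, i.e.\ precisely what one is trying to prove by showing $\cT\simeq\cV_t$. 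Establishing it independently is not a shortcut, it is the whole problem, and the circularity is genuine. Moreover, the concrete mechanism you sketch fails at the first step: "building an internal copy of $\cD_{t_1}\otimes\cD_{t_2}$ inside $\cC$ through which $\cD_t\to\cC$ factors" would require a canonical decomposition of $C$ into pieces of dimensions $t_1$ and $t_2$, and a general $C$ has no such decomposition. There is nothing in the Schur-functor hypothesis that produces one.

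The route the paper actually takes is asymmetric and avoids this entirely. Once the universal property of $\cV_t$ (Theorem~\ref{Mthm:uni-prop}/\ref{Mthm:uni-2}) gives the exact SM functor $U:\cV_t\to\cD_{t_1}\otimes\cD_{t_2}$ sending $V_t\mapsto X$, one observes that the \emph{same} universal property implies $\pi(\cV_t)\cong GL(V_t)$ (the evaluation map on functors of points is an isomorphism because automorphisms of $i_A\circ I:\cD_t\to\Mod_A$ are determined by their value on the generator, which is exactly what uniqueness of extensions along $I$ says). Then Deligne's relative Tannakian reconstruction (Theorem~8.17 of \cite{D1}, quoted as Proposition~\ref{prop:Delignes_fund_grps} and Corollary~\ref{lem:equiv_relative_repr_cat}) applied to $U$ immediately yields an equivalence $\cV_t\xrightarrow{\ \sim\ }\rp_{\cD_{t_1}\otimes\cD_{t_2}}(GL(U(V_t)),\epsilon)=\rp_{\cD_{t_1}\otimes\cD_{t_2}}(GL(X),\epsilon)$. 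No second universal property, and no internal reconstruction inside $\cC$, is needed. This one-sided Tannakian argument is the missing idea in your proposal.
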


\subsection{}

Our results can be illustrated as follows.

The description of functors from Deligne categories to tensor categories looks like a category-theory version of the description of homomorphisms from a 
commutative ring to fields. Homomorphisms between fields are injective, simlarly to exact SM functors between tensor categories. This indicates that tensor 
categories should be considered as analogues of fields.

If $A$ is a commutative ring, any ring homomorphism 
$A\longrightarrow K$ to a field uniquely factors through a homomorphism
$A\longrightarrow k(\mathfrak{p})$ where $\mathfrak{p}$ is a prime ideal of $A$ 
and $k(\mathfrak{p})$ is the residue field of the localization $A_{\mathfrak{p}}$.
If $A$ is a domain, there is a fraction field $k(\mathfrak{p})$ with $\mathfrak{p}=0$, the only one for which the map $A\to k(\mathfrak{p})$ is injective. This is analogous to abelian envelope in our context (see \ref{introsec:abelian_env}). From this point of view, $\rp(S_t)$ looks like a local domain of dimension one.

On the other hand, $\cD_t=\rp(GL_t)$ is more curious. The existence of SM functors
$ \mathcal H: \rp(\mathfrak{gl}(m|n)) \longrightarrow \rp(\mathfrak{gl}(m-1|n-1))$
carrying the standard representation to the standard representation, implies that
the kernels of SM functors $\cD_t\to\rp(\mathfrak{gl}(m|n))$ form an infinite 
descending chain. The functor $\cD_t\to \rp(GL(m|n))$ is surjective when $m=0$ or $n=0$ --- this is the case where the category of representations of the 
supergroup (superalgebra) is semisimple. This means that the ``prime spectre'' of $\cD_t$ 
consists of the zero ideal (corresponding to $\cV_t$) and an infinite decreasing sequence of   the kernels of the functors $\cD_t\to\rp(GL(t+i|i))$ or 
for all $i\in\N$ such that $t+i\in\N$.

\subsection{Structure of the paper}

Sections \ref{sec:Deligne_prelim}, \ref{sec:rep_superalgebras}, \ref{sec:infty_rep_prelim}, we give a short overview of the required results from the theory of representations of superalgebras and the theory of Deligne categories.
Section \ref{sec:specialization_functor} sets the scene for the construction of the category $\cV_t$. Section \ref{sec:specialization_functor} describes the specialization functor from the category of representations of $\gl(\infty|\infty)$ to $\rp(\gl(m|n))$. Section \ref{ssec:V_lambda_objects} studies the standard objects in the subcategories 
$\rp^k(\gl(m|n))$ of the categories $\rp(\gl(m|n))$ of which $\cV_t$ will be ``glued''. This section contains some technical results which are important for the proofs of Theorem \ref{thrm:functorequiv} and Proposition \ref{prop:presentation}.

In Section \ref{sec:homology_functor} we construct the Duflo-Serganova homology functors $\mathcal H:\rp(\gl(m|n)) \to \rp(\gl(m-1|n-1)) $. We study their behaviour on the subcategories $\rp^k(\gl(m|n))$, showing that for $m, n>>k$, they induce equivalences $\rp^k(\gl(m|n))\to\rp^k(\gl(m-1|n-1))$.

In Section \ref{sec:maincategory} we construct the category $\cV_t$. 
We then study the properties of $\cV_t$: the functor from the Deligne category, blocks, translation functors, the highest weight structure of the ind-completion, and some auxilary results which are crucial in the proof of Theorem \ref{introthrm:universal} (such as Proposition \ref{prop:presentation}).

In Section \ref{sec:universal_prop} we formulate the universal property of $\cV_t$ (Theorem \ref{introthrm:universal}) in its most general form. 
The proof itself is contained in Section \ref{sec:proof_univ_prop}.

In Section \ref{sec:Deligne_conj} we recall Deligne's theory of Tannakian 
formalism, formulate and prove Theorem \ref{introthrm:Deligne_conj} in its most 
general form.

\section{Notation}

The base field throughout this paper will be $\mathbb{C}$.

\subsection{Tensor categories}
In this paper a {\it tensor category} is a rigid symmetric monoidal abelian $\mathbb{C}$-linear category, where the bifunctor $\otimes$ is bilinear on 
morphisms, and $\End(\triv) \cong \mathbb{C}$. An explicit definition can be found in \cite{D1, EGNO}. 
Note that in such a category the bifunctor $\otimes$ is biexact.

A functor between symmetric monoidal (SM) categories will be called a SM functor if respects the SM structure in the sense of \cite[2.7]{D1} 
(there it is called "foncteur ACU"); similarly, an $\otimes$-natural transformation between SM functors is a transformation respecting the 
monoidal structure in the sense of \cite[2.7]{D1}.

Notice that any such natural transformation is an isomorphism.

As in \cite[2.12]{D1}, a pre-Tannakian category is a tensor category satisfying finiteness conditions: namely, every object has finite length and every $\Hom$-space is finite-dimensional over $\mathbb{C}$.

\subsection{Partitions}
Given an integer $n \geq 0$, a weakly-decreasing sequence 

$\mu =(\mu_1, \mu_2, \ldots, \mu_k)$ of non-negative integers such that $\sum 
\mu_i = n$ is called a partition of $n$. The integer $n$ will be called the size 
of the partition, and $k$ will be called the length of the partition. We will 
often identify partitions which differ only by a tail of zeros.

Given a partition $\mu$, we denote by $\mu +\square$ the set of all partitions $\nu$ such that the vector $\mu-\nu$ (considered as an infinite vector) is a vector in the standard basis of $\mathbb Z^\infty$: this corresponds to the fact that the Young diagram of $\nu$ is obtained from the Young diagram of $\mu$ by adding one square.

A bipartition is an ordered pair of partitions: $\lambda = (\lambda^{\circ}, \lambda^{\bullet})$. 

For a bipartition $\lambda= (\lambda^{\circ}, \lambda^{\bullet})$, we will denote by $\lambda +\square$ (respectively, $\lambda+\blacksquare$) the set of bipartitions $\lambda'= (\lambda'^{\circ}, \lambda'^{\bullet})$ such that $\lambda'^{\circ} \in \lambda^{\circ} +\square$, $\lambda'^{\bullet} = \lambda^{\bullet}$ (resp., $\lambda'^{\circ} = \lambda^{\circ} $, $\lambda'^{\bullet}\in \lambda^{\bullet}+\square$).

\section{Preliminaries on Deligne categories}\label{sec:Deligne_prelim}
Let $\mathcal{D}^\circ_t$ be the free rigid symmetric monoidal $\mathbb{C}$-linear category generated by one object of dimension $t$ (denoted by Deligne as $\rp_0(GL_t)$, see \cite[Section 10]{D}). We denote by $X_t$ and $X^*_t$ the $t$-dimensional generator and its dual; we will also denote $X_t^{p, q} := X_t^{\otimes p} \otimes {X_t^*}^{\otimes q}$ the mixed tensor powers of $X_t$. 

These describe all the objects in $\mathcal{D}^\circ_t$ up to isomorphism. The morphisms are generated by morphisms $\id_{X_t}, ev_{X_t}$ and symmetric braiding under the operations $\circ, \otimes, *$, \cite{BCNR}. An explicit description of the spaces of morphisms in $\mathcal{D}^\circ_t$ in terms of walled Brauer algebras can be found in \cite{D, CW}. 

We will denote by $\mathcal{D}_t$ the Karoubi (additive) envelope of $\mathcal{D}^\circ_t$, which is obtained from $\mathcal{D}^\circ_t$ by adding formal direct sums and images of idempotents. The category $\mathcal{D}_t$ is a Karoubian rigid symmetric monoidal category, also called the Deligne category $\rp(GL_t)$; it is the universal Karoubian additive symmetric monoidal category generated by a dualizable object of dimension $t$. Its structure is studied in \cite{D, CW}.

We list below a few properties of $\mathcal{D}_t$:
\begin{itemize}
 \item For $m, n \in \Z_{\geq 0}$, the category $\mathcal{D}_{t = m-n}$ admits a full symmetric monoidal additive functor $F_{m,n}: \mathcal{D}_{t=m-n} \longrightarrow \rp(\gl(m|n))$.
 \item For $t \not\in \Z$, the category $\mathcal{D}_t$ is a semisimple abelian tensor category.
 \item For any $t$, the indecomposable objects (up to isomorphism) in the category $\mathcal{D}_t$ are parametrized by bipartitions (equivalently, pairs of Young diagrams of any size). 
 
 Let $t = m \in \Z_{\geq 0}$. The functor $F_{m,0}: \mathcal{D}_{t=m} \longrightarrow \rp(GL_m(\C))$ \InnaD{is full and} sends an indecomposable object $X_{\lambda^{\bullet}, \lambda^{\circ}}$ to the irreducible $GL_m(\C)$-representation with highest weight $\sum_i \lambda^{\bullet}_i \varepsilon_i - \sum_j \lambda^{\circ}_j \varepsilon_{m-j+1}$ whenever $\ell(\lambda^{\bullet}) + \ell(\lambda^{\circ}) \leq m$ (here $\ell$ is the number of rows in the Young diagram). Otherwise  $F_{m,0}(X_{\lambda^{\bullet}, \lambda^{\circ}}) = 0$.
 \item In particular, for any partition $\lambda^{\circ}$ of $n$, we have the corresponding idempotent $e_{\lambda^{\circ}} \in \End_{\mathcal{D}_t}( X_t^{\otimes n})$ whose image $S^{\lambda^{\circ}} X_t$ is the Schur functor $S^{\lambda^{\circ}}$ applied to $X_t$. The object $S^{\lambda^{\circ}} X_t$ is indecomposable, and $e_{\lambda^{\circ}}$ form a complete set of primitive idempotents of $\End_{\mathcal{D}_t}( X_t^{\otimes n})$ for different partitions $\lambda^{\circ}$ of $n$.
\end{itemize}

%

\section{Preliminaries on representations of finite-dimensional Lie superalgebras}\label{sec:rep_superalgebras}
In this section, we remind the reader of some representation theory of Lie superalgebras $ \gl(\infty|\infty), \mathfrak{gl}(m|n)$.
\subsection{Superspaces and Lie superalgebras}

Let $V$ be a super-vector space. We denote by $V_{\bar 0}$ the even part of $V$ and by $V_{\bar 1}$ the odd part of $V$.

Let $ m, n$ be the dimensions of the vector spaces $V_{\bar 0}$, $ V_{\bar 1}$. We denote by $\mathfrak{gl}(m|n)$ the general linear Lie superalgebra of all endomophisms of $V$. 

Similarly, let $\tilde V = \C^{\infty} \oplus \Pi \C^{\infty}$ be the super vector space whose even and odd parts are isomorphic to the countably-dimensional spaces $\C^{\infty} = \bigcup_n \C^n$. We denote by $ \gl(\infty|\infty) = \displaystyle\lim_{\rightarrow}\mathfrak{gl}(m|n)$ 
for the finitary general linear Lie superalgebra of endomophisms of $\tilde V$.

In what follows we denote $\tilde\fg=\mathfrak{gl}(\infty|\infty)$ and 
$\fg=\mathfrak{gl}(m|n)$ for short.

Both Lie algebras $\g, \tg$ are equipped with a $\Z$-grading
$$\tg=\tg_{-1}\oplus\tg_{0}\oplus\tg_{1}, \g=\g_{-1}\oplus\g_{0}\oplus\g_{1}$$
with $\tg_0\simeq \mathfrak{gl}(\infty)\oplus\mathfrak{gl}(\infty)$, $\g_0 \cong \gl(m) \oplus \gl(n)$, $\g_1 \cong V_{\bar 0} \otimes V_{\bar 1}^*, \g_{-1} \cong V_{\bar 0}^* \otimes V_{\bar 1}$.

\subsection{Representations of Lie superalgebras}\label{ssec:rep_superalgebras}
We now describe the category of representations of these superalgebras which we will consider.

The category of (all) representations of any Lie superalgebra $\fg$ is left-tensored
over the category of finite dimensional super vector spaces $\sVect$. This means that, given a super vector space $U$ and a representation $X$, the tensor product
representation $U\otimes X$ is defined. In particular, the parity change functor 
$\Pi$ is the tensor product with the odd one-dimensional super space.

It is often possible to define a smaller category of representations $\rp(\g)$
such that the full category of representations has form $\sVect\otimes\rp(\g)$.

The objects of the category $\sVect\otimes\rp(\g)$ are formal direct sums 
$U\oplus \Pi W$ with $U,W\in\rp(\g)$, with 
\begin{equation}
\Hom(U\oplus\Pi W,U'\oplus\Pi W')=\Hom_{\rp(\fg)}(U,U')\oplus\Hom_{\rp(\fg)}(W,W').
\end{equation}

\begin{remark}
 Such "halved" categories of representations are a special case of Deligne's categories $\rp(G,\epsilon)$ of representations $\sVect\otimes\rp(\g)$ of algebraic groups
in a tensor category, see details in Section \ref{sec:Deligne_conj}.

The category $\rp(\mathfrak{gl}(m|n))$ can be defined as the category of finite-dimensional (super) representations of $\g$, integrable over the algebraic supergroup $GL(m|n)$, on which the action of the element $\id_{V_{\bar 0}} - \id_{V_{\bar 1}} \in GL(m|n)$ is compatible with the grading given by the super structure.
\end{remark}


%

\subsection{Mixed tensor powers}\label{ssec:frrep} 
The Lie superalgebra $\g$ acts on $V$ (this action is called the {\it natural} $\g$-module) and on $V^*=(V^*)_{\bar{0}}\oplus (V^*)_{\bar 1}$ (the {\it conatural} $\g$-module).


 We denote by $T^{p,q}:=V^{\otimes p}\otimes (V^*)^{\otimes q}$ the mixed tensor powers of the natural module. 


 The category $\rp(\g)$ can be described as the full subcategory of category of finite-dimensional (super-)representations of $\g$ which are subquotients of direct sums of $T^{p,q}$, $p, q \geq 0$.
\subsection{Highest weight structure}


The category $\rp(\g)$ has a highest weight structure.

We will use the standard basis of weights in $\g$: consider the standard Cartan subalgebra $\h\subset\g_0$ ($\h \cong \C^m \oplus \C^n$), with basis $\{\varepsilon_1,\ldots,\varepsilon_m, \delta_1, \ldots, \delta_n\}$. We write
weights of modules in $\rp(\g)$ in the form
$$\lambda=(\lambda^{\circ}_1, \ldots, \lambda^{\circ}_m,-\lambda^{\bullet}_1,\ldots,-\lambda^{\bullet}_n)=\sum_{i=1}^m\lambda^{\circ}_i\varepsilon_i - \sum_{i=1}^n\lambda^{\bullet}_i\delta_i,\quad \lambda_i\in\mathbb Z.$$

It will be convenient to us to use a slightly unusual choice of simple roots for $\g$:
$$\varepsilon_1-\varepsilon_2,\dots,\varepsilon_m-\delta_n,\delta_n-\delta_{n-1},\dots,\delta_2-\delta_1.$$

Then the set of dominant weights is given by $$\lambda=\sum_{i=1}^m a_i\varepsilon_i-\sum_{j=1}^nb_j\delta_j$$
for some integers $a_1\geq\dots\geq a_m$, $b_1\geq\dots\geq b_n$.

Simple objects in $\g$ are parametrized up to isomorphism by dominant weights: we denote by $L(\lambda)$ the simple highest weight $\g$-module with dominant highest weight
$\lambda$.


We also have the standard and costandard objects in the highest weight category $\rp(\g)$, again parameterized up to isomorphism by dominant weights. These are called Kac and dual Kac modules respectively, and are denoted by $K(\lambda)$, $\check{K}(\lambda)$. Each simple module $L(\lambda)$ is the unique irreducible quotient of the appropriate Kac module. 

The Kac modules are defined as usual via induction: consider the subalgebra $\g_0\oplus\g_1 \subset \g$, and let $L_{0}(\lambda)$ be the simple $\g_0$-module with highest weight $\lambda$
and trivial action of $\g_1$. Then the induced module $U(\g)\otimes_{U(\g_0\oplus\g_1)}L_{0}(\lambda)$ is defined to be the Kac module $K(\lambda)$. Similarly, the dual Kac module $\check{K}(\lambda)$ can be defined via coinduction.


We call $\lambda$ {\it positive} if $a_i\geq 0$, $b_j\geq 0$ for all $i=1,\dots,m$, $j=1,\dots,n$ and {\it negative} if {\it both} $a_m$ and $b_n$ are negative.

There is a bijection between the set of positive weights and the bipartitions (we denote both by the same symbol), $\lambda=(\lambda^\circ,\lambda^\bullet)$,
with $\lambda^\circ=(a_1,\dots,a_m),\lambda^{\bullet}=(b_1,\dots,b_n)$.

\subsection{Weight diagrams and block decomposition}
Recall the notion of weight diagram $f_\lambda$ associated with any dominant weight $\lambda$. Let $t=m-n$, $c_i=a_i+t-i$, $d_j=b_j-j$. 
The weight diagram corresponding to $\lambda$ is the function $f_\lambda:\mathbb Z\to\{\times,>,<,\circ\}$ defined by
\begin{equation}\label{eq:weightdiagrams}
f_\lambda(s)=\begin{cases} \circ,\,\text{if}\,c_i\neq s,\,d_j\neq s\,\text{for all}\,i,j,\\
>,\,\text{if}\,c_i=s,\,\text{for some}\,i,\,d_j\neq s\,\text{for all}\,j,\\
<,\,\text{if}\,d_j=s,\,\text{for some}\,j,\,c_i\neq s\,\text{for all}\,i,\\
\times,\,\text{if}\,c_i=d_j=s\,\text{for some}\,i,j.
\end{cases}
\end{equation}
Usually a weight diagram is represented by a picture. For instance, if $m=2,n=1$ and $\lambda$ is the highest weight of the standard module,
we have $\lambda=\varepsilon_1$ and $f_\lambda$ is
$$\circ\dots\circ\times\circ>\circ\dots,$$
with $f_\lambda(1)=>$. If $\mu$ is the highest weight of the costandard module, then $\mu=-\delta_1$ and $f_\mu$ is
$$\circ\dots\circ>\times\circ\dots,$$
with $f_\mu(0)=\times$.

The {\it core} diagram $\bar {f}_\lambda$ is obtained from $f_\lambda$ by replacing all $\times$ by $\circ$. The core diagrams enumerate the blocks in 
$\rp(\g)$, since $\chi_\lambda=\chi_\mu$ if and only if $\bar f_\lambda=\bar f_\mu$. So we have a block decomposition
\begin{equation}\label{eq:finblock}
\rp(\g)=\bigoplus_{\chi}\rp(\g)^\chi,
\end{equation}
where summation is taken over all core diagrams $\chi$.

\begin{remark}\label{rem:weight1} It is useful to note that $\lambda$ is positive if and only if $f_\lambda(s)=\circ$ for all $s<-n$. 
\end{remark}

\subsection{On translation functors}
If $\chi$ is a core diagram, we denote by $u(\chi)$ the total 
number of symbols $>$ and $<$ in $\chi$. The degree of atypicality of $\chi$ equals $\frac{m+n-u(\chi)}{2}$.
We will need the following lemma from \cite[Lemma 7]{GS} and \cite[Theorem 3.2]{Ssd}.

\begin{lemma}\label{lem:GS} Let $\g=\mathfrak{gl}(m|n)$ and $\rp(\g)^\chi$ and $\rp(\g)^\theta$ be two blocks associated with core 
diagrams $\chi$ and $\theta$.
Consider the translation functors $T_{\chi,\theta},T^*_{\chi,\theta}:\rp(\g)^\chi\to \rp(\g)^\theta$
%

defined by
$$T_{\chi,\theta}(M)=(M\otimes V)^\theta,\quad T^*_{\chi,\theta}(M)=(M\otimes V^*)^\theta, $$
where $(\cdot)^\theta$ stands for projection onto the block with core $\theta$.

The functors $T_{\chi,\theta}$, $T^*_{\theta,\chi}$ are biadjoint and satisfy the following properties:
\begin{enumerate}[label=(\alph*)]
 \item If $u(\chi)=u(\theta)$ and $\theta$ is obtained from $\chi$ by moving $>$ one position right or $<$ one position left, then
$T_{\chi,\theta}$ defines an equivalence between the abelian categories  $\rp(\g)^\chi$ and $\rp(\g)^\theta$. 

\item If $u(\chi)=u(\theta)$ and $\theta$ is obtained from $\chi$ by moving $<$ one position right or $>$ one position left, then
$T^*_{\chi,\theta}$ defines an equivalence between abelian categories $\rp(\g)^\chi$ and $\rp(\g)^\theta$.   

\item Assume $u(\theta)=u(\chi)-2$ and there exists $s$ such that  $\theta(r)=\chi(r)$ if $r\neq s,s+1$, $\chi(s)=<$, $\chi(s+1)=>$ and
$\theta(s)=\theta(s+1)=\circ$. If $P(\lambda)$ is the projective cover
of $L(\lambda)$ in $\rp(\g)$, then $T^*_{\chi,\theta}(P(\lambda))=P(\mu)$, where 
$$f_\mu(r)=\begin{cases}f_\lambda(r),\,\text{if}\,\, r\neq s,s+1\\
\times,\,\text{if}\,\, r=s\\
\circ,\,\text{if}\,\, r=s+1\end{cases}.$$

\item Assume $u(\theta)=u(\chi)-2$ and there exists $s$ such that  $\theta(r)=\chi(r)$ if $r\neq s,s+1$, $\chi(s)=>$, $\chi(s+1)=<$ and
$\theta(s)=\theta(s+1)=\circ$.
Then $T_{\chi,\theta}(P(\lambda))=P(\mu)$, where 
$$f_\mu(r)=\begin{cases}f_\lambda(r),\,\text{if}\,\, r\neq s,s+1\\
\times,\,\text{if}\,\, r=s\\
\circ ,\,\text{if}\,\, r=s+1\end{cases}.$$

\item Let $u(\chi)\geq u(\theta)$ and $L$ be a simple module in $\rp(\g)^\theta$. Then $T_{\theta,\chi}(L)$ (or $T^*_{\theta,\chi}(L)$) is either simple or
zero. Furthermore, if $T_{\theta,\chi}(L)\simeq T_{\theta,\chi}(L')\neq 0$ (or $T^*_{\theta,\chi}(L)\simeq T^*_{\theta,\chi}(L')\neq 0$) for some simple
$L$ and $L'$ in $\rp(\g)^\theta$, then $L\simeq L'$.

\end{enumerate}
\end{lemma}  

Let us recall that the 
multiplicity of a simple module in a Kac module is at most one and it can be calculated using cap diagrams (see, for instance, \cite{MS}).
We equip $f_\lambda$ with caps following the rule
\begin{enumerate}
\item The left end of a cap is at $\times$ and the right end is at $\circ$;
\item Caps do not overlap;
\item There is no $\circ$ inside a cap \InnaA{which is not the endpoint of some 
other cap};
\item Every $\times$ is the left end of exactly one cap.
\end{enumerate}

Let us recall the following \cite{MS}.
\begin{proposition}\label{prop:multinkac} The multiplicity $[K(\lambda):L(\mu)]$ is at most one. It is $1$ if and only if one can obtain $\lambda$ from
$\mu$ by moving some $\times$ from the left end of its cap to the right end.
\end{proposition}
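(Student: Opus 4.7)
The plan is to prove Proposition \ref{prop:multinkac} by reducing via BGG reciprocity to the computation of Kac-module multiplicities inside projective covers, then leveraging the action of translation functors on these projectives as encoded in Lemma \ref{lem:GS}.

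First I would invoke BGG reciprocity for $\rp(\g)$: one has $[P(\mu):K(\lambda)] = [K(\lambda):L(\mu)]$, where the left-hand side counts the multiplicity of $K(\lambda)$ as a subquotient in a standard filtration of the projective cover $P(\mu)$. The problem thus reduces to analyzing the Kac filtration of $P(\mu)$, for which Lemma \ref{lem:GS} provides the essential tools. As a base case, in a typical block one has $P(\mu) = K(\mu) = L(\mu)$, so the multiplicity is $1$ exactly when $\lambda = \mu$, and in that case the weight diagram $f_\mu$ contains no $\times$'s and no caps, matching the statement vacuously.

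Next I would use the equivalences of Lemma \ref{lem:GS}(a), (b), which preserve the abelian structure (and hence Kac filtrations), to reduce each block to a convenient canonical form in which the $\times$ symbols of $f_\mu$ occupy adjacent positions to the $>$ or $<$ needed for the creation moves of parts (c), (d). Then I would induct on the degree of atypicality using Lemma \ref{lem:GS}(c), (d), which say that certain translation functors send a projective cover in a less atypical block to a projective cover in a more atypical block where a new $\times$ has been created from an adjacent $<>$ or $><$ pair. Combining this with the short exact sequences describing $K(\nu) \otimes V$ and $K(\nu) \otimes V^*$ (analogous to Corollary \ref{cor:tp} but at the level of Kac modules), one can track how the Kac filtration of $P(\mu)$ evolves. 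The crucial observation is that each step adds Kac factors corresponding precisely to a single cap-move: a $\times$ at the left end of a cap may be swapped to the circle at the right end, and nothing else is allowed because the structure of weight diagrams prevents the resulting data from violating the non-overlapping, no-$\circ$-inside-a-cap rules.

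The main obstacle will be the bookkeeping needed to show that exactly this set of $\lambda$ arises, each with multiplicity one, without overcounting. Concretely, one must verify that the order in which caps are opened by successive translation functors does not matter (so that the set of resulting Kac factors depends only on $\mu$), and that the cap-nesting rules are forced by the combinatorics of $>,<,\times,\circ$ interactions under parts (c), (d) of Lemma \ref{lem:GS}. A cleaner alternative is to invoke the Kazhdan-Lusztig combinatorics of Brundan and Stroppel for $\mathfrak{gl}(m|n)$, in which cap diagrams directly index a dual basis to the one indexing Kac modules, giving the claim almost tautologically at the price of importing heavier external machinery; since the paper cites \cite{MS}, this is the most efficient route.
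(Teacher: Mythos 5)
The paper offers no proof of this proposition: the line immediately preceding it reads ``Let us recall the following \cite{MS}'', and the result is simply imported from Musson--Serganova. There is therefore no internal argument to compare your sketch against, and the ``cleaner alternative'' you mention at the end of your proposal---citing the known cap/arc combinatorics---is in fact exactly what the paper does. Your closing sentence gets this right.

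As a freestanding proof attempt, your skeleton (BGG reciprocity; block equivalences from Lemma~\ref{lem:GS}(a),(b); induction on atypicality via Lemma~\ref{lem:GS}(c),(d)) does reflect the standard route in the literature, but it leaves the real work undone. Two concrete issues. First, tensoring a Kac module with $V$ or $V^*$ does \emph{not} produce a two-step short exact sequence as in Corollary~\ref{cor:tp}; $K(\nu)\otimes V$ has a Kac filtration whose subquotients run over \emph{all} dominant $K(\nu+\varepsilon_i)$ and $K(\nu+\delta_j)$, so describing it as ``analogous to Corollary~\ref{cor:tp}'' misstates the setup. Second, the ``bookkeeping'' you flag as the main obstacle is precisely the theorem: one must show that iterating (c),(d) yields exactly the cap-move multiplicities and no others, that the answer is independent of the order in which caps are created, and that the non-overlap and no-interior-$\circ$ rules emerge from the $>,<,\times,\circ$ dynamics. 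Deferring this to ``the structure of weight diagrams prevents\dots'' is a placeholder, not an argument. Since the paper cites \cite{MS} rather than reproving the combinatorics, the correct thing to do here is simply to cite as well rather than reconstruct the proof.
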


\subsection{Contragredient duality}
In what follows we also use the contragredient duality functor $\check{}:\rp(\g)\to \rp(\g)$ defined as follows. For any $M\in \rp(\g)$ we set
$$\check M:=(M^*)^\sigma,$$
where the automorphism $\sigma$ of $\g$ is the negative supertransposition. 
We will use the following facts, which are well-known:
\begin{lemma}
 \mbox{}
 \begin{itemize}
  \item If $L\in \rp(\g)$ is simple, then $\check L\simeq L$. Moreover, any highest weight module $L$ such that $\check L$ is isomorphic to $L$
is simple. 
\item The contragredient duality interchanges the Kac module $K(\lambda)$ and the dual Kac module $\check{K}(\lambda)$.
 \end{itemize}

\end{lemma}

\subsection{A filtration on \texorpdfstring{$\rp(\g)$}{Rep(g)}}

Finally, we define a filtration on $\rp(\g)$ by full subcategories $\rp^k(\g)$.
Let $\rp^k(\g)$ be the abelian subcategory of  $\rp(\g)$ consisting of all subquotients of finite direct sums of $\displaystyle\bigoplus_{p+q\leq k}T^{p,q}$.

We call a weight $\lambda$ $k$-{\it admissible} if $L(\lambda)$ belongs $\rp^k(\g)$. A central character $\chi$ is $k$-{\it admissible} if $\chi=\chi_\lambda$
for some $k$-admissible $\lambda$.

Note that $\check{T}^{p,q}\simeq T^{p,q}$, hence  $\rp^k(\g)$ is closed under 
contragredient duality.

\begin{remark}\label{rem:weight2} If a block $\rp(\g)^\chi$
is $k$-admissible for some $k$ then the core diagram $\chi$ must satisfy $\chi(s)=\circ$ for all $s<-n$.
\end{remark}

\section{The category \texorpdfstring{$\T_{\tg}$}{of representations of infinite-rank Lie superalgebra}}\label{sec:infty_rep_prelim}
\subsection{}
Recall that the natural representation $\tV$ of
$\tg$ is countably-dimensional. We will consider its restricted dual $\tV_*$ (also countably-dimensional), and the non-degenerate pairing $ev:\tV \otimes \tV_* \to \C$. The superspace $\tV_*$ obviously carries an action of $\tg$ induced by the action on $\tV$, and is called the conatural representation.
By $\tT^{p,q}$ we denote the $\tg$-module $\tV^{\otimes p}\otimes (\tV_*)^{\otimes q}$, analogue of the mixed tensor power $T^{p,q}$. 

Let $\T_{\tg}$ be the full subcategory of $\tg$-modules consisting of all subquotients of finite direct sums of $\tT^{p,q}$, $p, q \geq 0$. 
This category has an intrinsic characterization: it consists of integrable $\tg$-modules of finite length with prescribed parity
of weight spaces
such that the annihilator of any vector is a finite corank subalgebra in $\tg$, see \cite{Sr}. 

\begin{remark}\label{rem-inv} Note that we have the natural identification $\tg=\tV\otimes\tV_*$. One can see that in contrast with the finite-dimensional case
$\Hom_{\tg}( \mathbb C,\tg)=0$ since $\tg$ does not contain the identity matrix. It is proven in \cite{PS} that in general $\Hom_{\tg}( \mathbb C,\T^{p,q})=0$
if $(p,q)\neq (0,0)$.
\end{remark}
 
 \begin{remark}
  This subcategory is of course not rigid.
 \end{remark}

It is proved  in \cite{Sr} that
$\T_{\tg}$ is equivalent, as a monoidal category, to the similar category $\T_{\mathfrak{gl}(\infty)}$ for the Lie algebra $\mathfrak{gl}(\infty)$. 

This category was also studied by Sam and Snowden and they proved that it is universal in the class of symmetric monoidal categories satisfying additional
properties, see \cite{SS}.

\subsection{Abelian structure}
The module $\bigoplus\tT^{p,q}$ is an injective cogenerator in $\T_{\tg}$. If $\mu$ is a partition of
length $p$, then we denote by $\tV(\mu)$ (respectively, $\tV_*(\mu)$) the image of a Young projector $\pi_\mu$ in $\tT^{p,0}$ (respectively, $\tT^{0,p}$).

It follows from \cite{DPS, Sr} that for any bipartition $\lambda=(\lambda^\circ,\lambda^\bullet)$ of length $(p,q)$, the module 
$\tY(\lambda):=\tV(\lambda^\circ)\otimes\tV_*(\lambda^\bullet)\subset \tT^{p,q}$ is
indecomposable injective in $\mathbb T_{\tilde g}$ with simple socle which we denote by $\tV(\lambda)$. 

It was proved in \cite{DPS} that these modules $\tV(\lambda)$, where 
$\lambda$ is a bipartition, describe all the isomorphism classes of simple objects in $\T_{\tg}$. The module $\tilde{Y}(\lambda)$ will then be the injective hull of $\tV(\lambda)$.

We will use the fact that the simple socle $\tV(\lambda)$ is the intersection of kernels
of all contraction maps
$\tY(\lambda)\to\tT^{p,q}\to \tT^{p-1,q-1}$. Moreover, one can describe the socle filtration of $\tY(\lambda)$,
see \cite{PS}.

\subsection{A filtration on \texorpdfstring{$\T_\tg$}{the category of representations of the infinite-rank Lie algebra}}
By analogy with the finite-dimensional case, we define a filtration on $\T_\tg$ by full subcategories $\T^k_\tg$.
We denote by $\T^k_\tg$  the full abelian subcategory of $\T_\tg$ with injective cogenerator
$\displaystyle\bigoplus_{p+q\leq k}\tT^{p,q}$. 

Clearly, $\T_{\tg}$ is the direct limit  $\displaystyle\lim_{\rightarrow}\T^k_\tg$.


\section{\texorpdfstring{The functor $\mathcal R_{\g}$}{The specialization functor} and the filtration \texorpdfstring{$\rp(\g)$ of $\rp^k(\g)$}{}}\label{sec:specialization_functor}

In this section we describe the specialization functor $\mathcal R_{\g}$ between the categories of representations of $\tg:=\mathfrak{gl}(\infty|\infty)$ and $\g:=\mathfrak{gl}(m|n)$. This functor is left-exact, and takes the natural representation $\tV$ to $V$. We prove that this is an SM functor, which respects the filtrations on $\T_\tg,\rp(\g)$. Furthermore, we show that for $k$ small enough, the functor $ R_{\g}:\T^k_{\tg}\to \rp^k(\g)$ is exact, and use it to describe a highest-weight structure on the subcategories $\rp^k(\g)$.

\subsection{\texorpdfstring{The functor $\mathcal R_{\g}$}{The specialization functor}}
Let $V$ be a $(m|n)$-dimensional subspace of $\tV$ and $W$ be a complementary infinite-dimensional  subspace such that $\tV=V\oplus W$.
Consider the dual decomposition $\tV_*=W^\perp\oplus V^\perp$. Then $W^\perp\simeq V^*$ and $V^\perp\simeq W_*$.
We identify $\g$ with $V\otimes V^*$ and consider the subalgebra $\k\subset \tg$, defined as
$\k:=W\otimes W_*$. Clearly, $\k$ is isomorphic to $\tg$ and $\g$ is the centralizer of $\k$ in $\tg$. 
We define the functor $$\mathcal R_{\g}:\T_\tg\to\rp(\g)$$ by 
$$\mathcal R_{\g}(M):=M^\k,$$ 
the space of $\k$-invariants of $M$.

\begin{lemma}\label{lem:invariants}

\mbox{}

\begin{enumerate}
\item\label{itm:inv1} If $M$ is a submodule in $\tT^{p,q}$, then $\mathcal 
R_{\g}(M)=M\cap T^{p,q}$;
\item\label{itm:inv2} $\mathcal R_{\g}(\T^k_{\tg})\subset \rp^k(\g)$;
\item\label{itm:inv3} $\mathcal R_{\g}$ is a SM functor.
\end{enumerate}
\end{lemma}

\begin{remark}
 As it was shown in \cite{SS}, there is an essentially unique SM left-exact functor $\T_\tg\to\rp(\g)$ taking $\tV$ to $V$. 
\end{remark}

\begin{proof} Consider the decomposition of $\tV$ and $\tV_*$ with respect to $\g\oplus\k$ action. We have
$$\tV=V\oplus W,\quad \tV_*=V^*\oplus W_*.$$
Note that $(W^{\otimes p}\otimes (W_*)^{\otimes q})^\k=0$ for all $p,q\neq 0$ by Remark \ref{rem-inv}. 
$$(\tT^{p,q})^\k=T^{p,q}.$$
Since $\mathcal R_{\g}$ is left exact, we obtain \eqref{itm:inv1}.

The assertion \eqref{itm:inv2} is an immediate consequence of \eqref{itm:inv1}.

To prove \eqref{itm:inv3} it suffices to consider the case $M\subset 
\tT^{p,q},\,N\subset \tT^{r,s}$. Then
$$\mathcal R_{\g} (M\otimes N)=(M\otimes N)\cap T^{p+r,q+s}=(M\cap T^{p,q})\otimes (N\cap T^{r,s})=\mathcal R_{\g}(M)\otimes \mathcal R_{\g}(N).$$
\end{proof}

\begin{theorem}\label{thrm:invariants} 

\mbox{}

\begin{enumerate}
\item\label{itm:thrm_inv1} If $k<\min(m,n)$, then the functor $\mathcal 
R_{\g}:\T^k_{\tg}\to \rp^k(\g)$ is exact.
\item\label{itm:thrm_inv2} Let $2k<\min(m,n)$ and $\lambda$ be a bipartition 
with $|\lambda|:=|\lambda^\circ|+|\lambda^\bullet|\leq k$ and 
$V(\lambda):=\mathcal R_{\g}\tV(\lambda)$. 
Then $V(\lambda)$ is a highest weight module with unique 
irreducible quotient $L(\lambda)$.
\item\label{itm:thrm_inv3} Let $2k<\min(m,n)$, then any simple module in 
$\rp^k(\g)$ is isomorphic to $L(\lambda)$ for some bipartition $\lambda$ with 
$|\lambda|\leq k$.
\end{enumerate}
\end{theorem}
\begin{proof}  Let $\g_0$, $\k_0$ and $\tg_0$ be the even parts of $\g$, $\k$ and $\tg$ respectively. One defines $\T_{\tg_0}$, $\T^k_{\tg_0}$, $\rp(\g_0)$ and $\rp^k(\g_0)$
similarly to the corresponding categories for $\tg$ and $\g$. General results about $\T_{\tg_0}$ can  be found in \cite{Sr}. 
Taking $\k_0$-invariants defines the
SM functor $\mathcal R_{\g_0}: \T_{\tg_0}\to\rp(\g_0)$. A simple direct calculation shows that 
$$\mathcal R_{\g_0}(\tT^{p,q})=T^{p,q}=\mathcal R_{\g}(\tT^{p,q}),$$
hence 
$$\mathcal R_{\g_0}(M)=\mathcal R_{\g}(M)$$
for any $M\in \T_{\tg}$. 
In particular, the following diagram of functors
\begin{equation}
\begin{CD}
\T^k_{\tg}@>\mathcal R_{\g}>>  \rp^k(\g)\\
@VVV @VVV \\
\T^k_{\tg_0}@>\mathcal R_{\g_0}>>\rp^k(\g_0)
\end{CD}\notag
\end{equation}
where the vertical arrows are the restriction functors, is commutative. 

To show \eqref{itm:thrm_inv1} it suffices to prove that $\mathcal R_{\g_0}: 
\T^k_{\tg_0}\to\rp^k(\g_0)$ is exact. Note that 
$$\T^k_{\tg_0}=\bigcup_{r+s=k}\T^r_{\mathfrak{\gl}(\infty)}\boxtimes \T^s_{\mathfrak{\gl}(\infty)},$$
where $\boxtimes$ stands for exterior tensor product and $\mathcal R_{\g_0}=\mathcal R_{\mathfrak{gl}(m)}\boxtimes \mathcal R_{\mathfrak{gl}(n)}$.

We will show first that if $s\leq l$, then $\mathcal R_{\mathfrak{gl}(l)}:\T^s_{\mathfrak{\gl}(\infty)}\to\rp(\mathfrak{gl}(l))^s$ is exact.
Simple objects of $\T^s_{\mathfrak{\gl}(\infty)}$ are of the form $\tV(\lambda)$ for some bipartition
$\lambda$ such that $|\lambda|\leq s$. Furthermore we have $$\mathcal R_{\mathfrak{gl}(l)}(\tV(\lambda))=V(\lambda),$$
and $V(\lambda)$ is a simple $\mathfrak{gl}(l)$-module. Recall that the multiplicity of $V(\lambda)$ in 
$T_{\mathfrak{gl}(\infty)}^{p,q}$
(for $p+q\leq s$) equals the multiplicity of  $V(\lambda)$ in 
$T^{p,q}$, see \cite{PS}. Since $\displaystyle\bigoplus_{p+q\leq s}\tT^{p,q}$ is an injective cogenerator of $\T^s_{\mathfrak{\gl}(\infty)}$,
$\mathcal R_{\mathfrak{gl}(l)}(\tT^{p,q})=T^{p,q}$ and $\mathcal R_{\mathfrak{gl}(l)}$ is left exact, we obtain the statement.
In fact, since $\rp_{\mathfrak{gl}(l)}^s$ is semisimple, the functor $\mathcal R_{\mathfrak{gl}(l)}$ is the semisimplification functor. Then 
$\mathcal R_{\g_0}=\mathcal R_{\mathfrak{gl}(m)}\boxtimes \mathcal 
R_{\mathfrak{gl}(n)}$ is also the semisimplification, and \eqref{itm:thrm_inv1} 
follows.
 
Let us prove \eqref{itm:thrm_inv2}. Recall the decomposition 
$\g=\g_{-1}\oplus\g_0\oplus \g_1$, where $\g_0=\mathfrak{gl}(V_{\bar 
0})\oplus\mathfrak{gl}(V_{\bar 1})$ and
$$\g_{-1}\simeq V_{\bar 0}^*\boxtimes V_{\bar 1}$$ as a $\g_0$-module. Set 
$$U(\lambda):=V_{\bar 0}(\lambda^\circ)\boxtimes V^*_{\bar 1}(\lambda^\bullet).$$
Then $U(\lambda)$ is a simple highest weight  $\g_0$-submodule of $V(\lambda)$ 
and \eqref{itm:thrm_inv2} is equivalent to the fact that
$$V(\lambda)=U(\g_{-1})U(\lambda).$$

Next set $$S^{p,q}:=V_{\bar 0}^{\otimes p}\boxtimes (V_{\bar 1}^*)^{\otimes q}.$$
Let $\phi$ be the direct sum of all contraction maps $T^{p,q}\to T^{p-1,q-1}$. Then by definition $\Ker\phi$ is a direct sum of $V(\lambda)$ with some 
multiplicities and similarly $S(\lambda)$ is a direct sum of 
$U(\lambda)$ with the same multiplicities.
To prove \eqref{itm:thrm_inv2} it suffices to show that for all $p,q$ such that 
$p+q<k$ we have 
$$
U(\g_{-1})S^{p,q}=\Ker \phi,
$$
Note that $U(\g_{-1})$ is isomorphic to the exterior algebra $\Lambda(\g_{-1})$. Define a $\g_0$-map $$\gamma:\Lambda(\g_{-1})\otimes U(\lambda)\to \Ker\phi$$ by setting
$\gamma(x\otimes u):=xu$. It is easy to see that $$\gamma (\Lambda^{p+q+1}(\g_{-1})\otimes S^{p,q})=0.$$ Hence we need to show that
$$\gamma: \Lambda^{\leq p+q}(\g_{-1})\otimes S^{p,q}\to \Ker\phi$$ is surjective.

Let $\tl S^{p,q}:=\tV_{0}^{\otimes p}\boxtimes (\tV_{1}^*)^{\otimes q}$,
$\tilde\phi$  be the direct sum of all contraction maps $\tT^{p,q}\to \tT^{p-1,q-1}$. Consider the map 
$$\tilde\gamma:\Lambda^{\leq p+q}(\tg_{-1})\otimes \tl S^{p,q}\to \Ker\tilde\phi$$
defined in the manner similar to $\gamma$. It is proved in \cite{Sr} that $\tilde\gamma$ is surjective.
Note that both
$\Lambda^{\leq p+q}(\tg_{-1})\otimes \tl S^{p,q}$ and $\Ker\tilde\phi$ are objects of $\T^{2k}_{\tg_0}$, furthermore
$$\mathcal  R_{\g_0}(\Lambda^{\leq p+q}(\tg_{-1})\otimes \tl S^{p,q})=\Lambda^{\leq p+q}(\g_{-1})\otimes S^{p,q},\quad
\mathcal  R_{\g_0}(\tilde\phi)=\phi.$$
Since $\mathcal  R_{\g_0}:\T^{2k}_{\tg_0}\to\rp(\g_0)^{2k}$ is exact  by 
\eqref{itm:thrm_inv1}, we 
obtain that $\mathcal  R_{\g_0}(\Ker\tilde\phi)=\Ker\phi$ and $\gamma=\mathcal  R_{\g_0}(\tilde\gamma)$.
Again the exacteness of  $\mathcal  R_{\g_0}:\T^{2k}_{\tg_0}\to\rp^{2k}(\g_0)$ implies that $\gamma$ is also surjective.

To prove \eqref{itm:thrm_inv3} we will show that under assumption 
$2k\leq\min(m,n)$ any simple subquotient in $V(\lambda)$ with $|\lambda|\leq k$ 
is isomorphic to $L(\mu)$ with
$|\mu|\leq k$. We proceed by induction in $l:=\min(p,q)$, where $p=|\lambda^\circ|,q=|\lambda^\bullet|$. 
Note that the case $l=0$ follows from Sergeev--Schur--Weyl duality.

Consider the submodule $Z(\lambda):=V(\lambda^\circ)\otimes V^*(\lambda^\bullet)$ in  $T^{p,q}$. 
Note that $\check Z(\lambda)\simeq Z(\lambda)$ and $V(\lambda)=Z(\lambda)\cap \Ker\phi$. 
Let $I(\lambda)$ be the sum of images of all coevaluation maps $T^{p-1,q-1}\to T^{p,q}\xrightarrow{p} Z(\lambda)$, here 
$p$ denotes the Young projector on $Z(\lambda)$. 
Consider the contragredient invariant form
$\omega:Z(\lambda)\times Z(\lambda)\to\mathbb C$. Then $V(\lambda)^\perp=I(\lambda)$, hence the restriction of $\omega$ on $V(\lambda)$ has the kernel
$I(\lambda)\cap V(\lambda)$. Therefore  
$V(\lambda)/(I(\lambda)\cap V(\lambda))$ is a contragredient highest weight module, hence it is isomorphic to $L(\lambda)$. 
Since $I(\lambda)$ is a submodule in a direct sum of several copies of $T^{p-1,q-1}$, we conclude by induction assumption that 
all simple subquotients of $I(\lambda)$ are isomorphic to $L(\mu)$ with $|\mu|\leq k$.
That implies the statement for $V(\lambda)$.

\end{proof}
\begin{remark} As follows from Theorem 
\ref{thrm:invariants}\eqref{itm:thrm_inv3}, if $2k<\min(m,n)$ then any 
$k$-admissible weight is positive with additional condition
$a_1+\dots+a_m+b_1+\dots +b_n\leq k$.
\end{remark}

\subsection{Standard modules in subcategories \texorpdfstring{$\rp^k(\g)$}{}}\label{ssec:V_lambda_objects}
In Theorem \ref{thrm:invariants}, we have defined modules $V(\lambda)$ which play the role of standard modules in the subcategories $\rp^k(\g)$. We now describe the actions of translation functors $\otimes V$, $\otimes V^*$ on them, and show that $V(\lambda)$ is the maximal quotient of the Kac module $K(\lambda)$ lying in $\rp^k(\g)$. This will be used in the proof of Theorem \ref{thrm:functorequiv} and in Proposition \ref{prop:presentation}. 

\subsubsection{Action of the translation functors on modules \texorpdfstring{$V(\lambda)$}{V(lam)}}
Recall the following statement from \cite{DPS}.

\begin{lemma}\label{lem:les} Let $\lambda$ be a bipartition. Let $\lambda+\square$ (respectively $\lambda+\blacksquare$) be the set of all bipartitions 
obtained from 
$\lambda$ by adding a box to $\lambda^\circ$ (respectively to $\lambda^\bullet$) and $\lambda-\square$ (respectively $\lambda-\blacksquare$) 
is the set of all bipartitions 
obtained from 
$\lambda$ by removing  a box from $\lambda^\circ$ (respectively from $\lambda^\bullet$).
For any simple object $\tV(\lambda)$ of $\T_\tg$,  there are exact sequences
$$0\to \bigoplus_{\eta\in\lambda+\square}\tV(\eta)\to \tV\otimes \tV(\lambda)\to \bigoplus_{\eta\in\lambda-\blacksquare}\tV(\eta)\to 0,$$
$$0\to \bigoplus_{\eta\in\lambda+\blacksquare}\tV(\eta)\to \tV_*\otimes \tV(\lambda)\to \bigoplus_{\eta\in\lambda-\square}\tV(\eta)\to 0.$$
\end{lemma}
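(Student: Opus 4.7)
My plan is to deduce the two sequences from a careful analysis of the submodule $\tV(\lambda)\subset \tY(\lambda)=\tV(\lambda^\circ)\otimes \tV_*(\lambda^\bullet)$ and of how the new natural factor interacts with contractions after tensoring. I will prove the first sequence in detail; the second follows by the involution swapping $\tV\leftrightarrow \tV_*$ (equivalently, by interchanging the two copies of $\mathfrak{gl}(\infty)$ in $\tg_0$ and swapping the roles of $\lambda^\circ,\lambda^\bullet$).

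First, I compute the ambient product $\tV\otimes\tY(\lambda)$. Setting $p=|\lambda^\circ|$, $q=|\lambda^\bullet|$, one has the standard Pieri identity in $\T_\tg$ (no contractions are available because $\tV(\lambda^\circ)\subset \tT^{p,0}$ is purely covariant): $\tV\otimes \tV(\lambda^\circ)=\bigoplus_{\mu\in\lambda^\circ+\square}\tV(\mu)$. Tensoring with $\tV_*(\lambda^\bullet)$ yields
$$\tV\otimes\tY(\lambda)=\bigoplus_{\mu\in\lambda^\circ+\square}\tY(\mu,\lambda^\bullet),$$
and the socle of the right-hand side is exactly $\bigoplus_{\mu\in\lambda^\circ+\square}\tV(\mu,\lambda^\bullet)$.

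Next, I analyze $\tV\otimes\tV(\lambda)$ as a submodule of this sum. Inside $\tT^{p+1,q}$ there are $(p+1)q$ contraction maps to $\tT^{p,q-1}$: the $pq$ \emph{old} contractions do not involve the new $\tV$-factor, and $\tV\otimes\tV(\lambda)$ automatically lies in their common kernel, because $\tV(\lambda)\subset\tY(\lambda)$ was already annihilated by the old contractions. The remaining $q$ \emph{new} contractions (the new $\tV$ against each $\tV_*$ in $\tV_*(\lambda^\bullet)$), after symmetrization by the Young projector for $\lambda^\bullet$, assemble into a single map
$$\tV\otimes\tV(\lambda)\longrightarrow \bigoplus_{\square'\text{ removable from }\lambda^\bullet}\tY(\lambda^\circ,\lambda^\bullet-\square')$$
whose target is the direct sum of injective hulls appearing in $\bigoplus_{\eta\in\lambda-\blacksquare}\tY(\eta)$. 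Composing with the projection to the socles produces the claimed surjection $\tV\otimes\tV(\lambda)\twoheadrightarrow \bigoplus_{\eta\in\lambda-\blacksquare}\tV(\eta)$. The subobject $\bigoplus_{\eta\in\lambda+\square}\tV(\eta)\subset\tV\otimes\tV(\lambda)$ arises as the intersection of the kernels of \emph{all} (old and new) contractions: these are elements of $\tV\otimes\tY(\lambda)$ that lie in no nontrivial contraction image, and by the characterization of simple objects in $\T_\tg$ as contraction-free subspaces this intersection is precisely $\bigoplus_{\mu\in\lambda^\circ+\square}\tV(\mu,\lambda^\bullet)=\bigoplus_{\eta\in\lambda+\square}\tV(\eta)$.

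The main obstacle, and the place where the most care is required, is verifying that there are no extra composition factors and that the proposed short exact sequence is indeed exact (in particular, that the map to $\bigoplus_{\eta\in\lambda-\blacksquare}\tV(\eta)$ really is surjective and that the kernel is exactly the contraction-free part). To close this I would use a character/multiplicity argument: transport everything along the monoidal equivalence $\T_\tg\simeq \T_{\mathfrak{gl}(\infty)}$ from \cite{Sr}, where both sides become direct sums of $\mathfrak{gl}(\infty)$-modules whose characters can be matched via the classical Pieri and branching rules for partitions. The bipartition bookkeeping then forces the composition multiplicities on both sides of the alleged sequence to agree, and since we have already exhibited a concrete injection and a surjection whose composition is zero, exactness follows. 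The second exact sequence is obtained by running the identical argument with $\tV$ replaced by $\tV_*$, interchanging the roles of covariant and contravariant tensor slots throughout.
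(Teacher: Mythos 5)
The paper does not prove this lemma; it is quoted verbatim from \cite{DPS}, so there is no in-paper argument to compare against. Evaluating your proposal on its own terms:

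Your identification of the subobject is essentially correct, though the reason deserves to be stated explicitly. You have $\tV\otimes\tV(\lambda)=(\tV\otimes\tY(\lambda))\cap\bigcap_{\text{old}}\Ker$, while $\operatorname{soc}(\tV\otimes\tY(\lambda))=(\tV\otimes\tY(\lambda))\cap\bigcap_{\text{all}}\Ker$; since the second intersection involves \emph{more} kernels, the socle of $\tV\otimes\tY(\lambda)$ already sits inside $\tV\otimes\tV(\lambda)$, and it equals $\bigoplus_{\eta\in\lambda+\square}\tV(\eta)$. So the injection is sound, but you should spell out this containment of socles rather than just asserting ``contraction-free subspace.''

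The surjection step, as written, does not work. You propose to compose the new-contraction map with a ``projection to the socles'' of the $\tY(\lambda^\circ,\lambda^\bullet-\square')$. But $\tY(\eta)$ is the injective hull of $\tV(\eta)$: it receives a canonical \emph{inclusion} from its socle, not a projection onto it. There is no natural map $\tY(\eta)\twoheadrightarrow\tV(\eta)$, so the epimorphism you need to $\bigoplus_{\eta\in\lambda-\blacksquare}\tV(\eta)$ has not actually been constructed. Since this is half the exact sequence, this is a real gap. (One can recover it, for instance, by showing the quotient $\tV\otimes\tV(\lambda)/\bigoplus_{\eta\in\lambda+\square}\tV(\eta)$ is semisimple; by the Koszul grading of $\T_\tg$ established in \cite{DPS}, $\Ext^1(\tV(\eta),\tV(\eta'))=0$ whenever $|\eta|=|\eta'|$, and all constituents of the quotient have $|\eta|=|\lambda|-1$. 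But this is substantive input you have not invoked.)

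Finally, the character argument that is supposed to close the proof is too vague to bear the weight you put on it. You need the Pieri identity in $K_0(\T_\tg)$, namely $[\tV][\tV(\lambda)]=\sum_{\eta\in\lambda+\square}[\tV(\eta)]+\sum_{\eta\in\lambda-\blacksquare}[\tV(\eta)]$, as an \emph{independently established} fact; simply gesturing at ``classical Pieri and branching rules'' risks circularity, since this identity is precisely the Grothendieck-group shadow of the lemma being proved. The honest route is to derive it from the Penkov--Styrkas socle multiplicities for the $\tY(\nu)$ (as in \cite{PS}, Theorem 2.3), which the paper itself later quotes. Once the $K_0$ identity, the injection, and the semisimplicity of the quotient are all in place, exactness indeed follows — but at least two of these three are not yet actually proven in your write-up.
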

\begin{corollary}\label{cor:tp} If $2k<\min(m,n)-2$ and $V(\lambda)\in\rp^k(\g)$, then there are exact sequences
$$0\to \bigoplus_{\eta\in\lambda+\square}V(\eta)\to V\otimes V(\lambda)\to \bigoplus_{\eta\in\lambda-\blacksquare}V(\eta)\to 0,$$
$$0\to \bigoplus_{\eta\in\lambda+\blacksquare}V(\eta)\to V^*\otimes V(\lambda)\to \bigoplus_{\eta\in\lambda-\square}V(\eta)\to 0.$$
\end{corollary}

\subsubsection{Comparison of \texorpdfstring{$V(\lambda)$}{modules V} with Kac modules}

\begin{proposition}\label{prop:kacrep} If $2k<\min(m,n)$ and $V(\lambda), V(\mu)\in\rp^k(\g)$, then
\begin{enumerate}
\item\label{itm:kacrep1} \InnaB{The module} $V(\lambda)$ is \InnaB{the} maximal quotient of $K(\lambda)$ 
lying in $\rp^k(\g)$ and $\check V(\mu)$ is \InnaB{the} maximal submodule of $\check 
K(\mu)$ in $\rp^k(\g)$;
\item\label{itm:kacrep2} $\dim\Hom (V(\lambda),\check 
V(\mu))=\delta_{\lambda,\mu}$;
\item\label{itm:kacrep3} $\Ext^1 (V(\lambda),\check V(\mu))=0$.
\end{enumerate}
\end{proposition}
\begin{proof} Part \eqref{itm:kacrep1} is a direct consequence of Lemma 
\ref{lem:multiplicities} below and Theorem \ref{thrm:invariants}.

Part \eqref{itm:kacrep2} follows from the fact that $\dim\Hom 
(K(\lambda),\check K(\mu))=\delta_{\lambda,\mu}$ and \eqref{itm:kacrep1}.

Let us prove \eqref{itm:kacrep3}.
Since $V(\lambda)$ is \InnaB{the} maximal quotient of $K(\lambda)$ which belongs
to $\rp^k(\g)$, we have that $V(\lambda)$ is projective in the
Serre subcategory of $\rp^k(\g)$ containing simples $L(\tau)$ for all
$\tau \leq \lambda$. Therefore $\Ext^1 (V(\lambda),\check V(\mu))\neq 0$ implies 
$\lambda<\mu$. On the other hand, $\check V(\mu)$ is a maximal
submodule of $\check K(\mu)$ lying in $\rp^k(\g)$; hence $\check V(\mu)$ is injective in the corresponding Serre subcategory, and therefore $\Ext^1 (V(\lambda),\check V(\mu))\neq 0$ implies
$\mu<\lambda$. Therefore $\Ext^1 (V(\lambda),\check V(\mu))= 0$.
\end{proof}

\begin{lemma}\label{lem:multiplicities} Assume that $2k<\min(m,n)$. Then for any $k$-admissible weights $\lambda$ and $\mu$ we have
$$[K(\lambda):L(\mu)]=[V(\lambda):L(\mu)].$$
\end{lemma} 
\begin{proof}
We will prove the following equivalent statement:
for any $k$-admissible $\lambda$
$$[K(\lambda)]-[V(\lambda)]=\sum [L(\nu)],$$
where all $\nu$ in the righthand side are non-positive (hence not $k$-admissible).

Let $\lambda$ be some $k$-admissible weight, $\Omega$ be the set of all dominant weights of the form $\lambda+\varepsilon_i$ or
$\lambda+\delta_j$ and $\Omega^+$ be the subset of all positive weights in $\Omega$ and $\Omega^-=\Omega\setminus\Omega^+$.
Corollary \ref{cor:tp} implies that
$$[V(\lambda)\otimes V]=\sum_{\mu\in\Omega^+} [V(\mu)].$$
On the other hand,
$$[K(\lambda)\otimes V]=\sum_{\mu\in\Omega} [K(\mu)].$$
If $\mu$ is not positive and $[K(\mu):L(\nu)]\neq 0$, then $\nu\leq \mu$ and therefore $\nu$ is also non-positive.

We prove the statement by induction on $|\lambda|$ starting with the trivial case $\lambda=0$.

Suppose
$$[K(\lambda)]-[V(\lambda)]=\sum c_\nu [L(\nu)].$$
By induction assumption all $\nu$ are non-positive.
Observe that all $\nu$ are negative, since  $\nu$ and $\lambda$ lie in the same block and hence $\bar f_\nu=\bar f_\lambda$, see 
Remark \ref{rem:weight1}.
But, then $L(\nu)\otimes V$ does not have simple subquotients with positive highest weights.
Therefore 
$$[K(\lambda)\otimes V]-[V(\lambda)\otimes V]=\sum c_\nu [L(\nu)\otimes V]$$
also does not have positive terms.
Therefore, if the statement is proved for $V(\lambda)$, it is also true for any $V(\mu)$ which occurs in the tensor product
$V(\lambda)\otimes V$. Similarly it is true for any $V(\mu)$ which occurs in the tensor product
$V(\lambda)\otimes V^*$. Since starting from the trivial module we can obtain any $V(\lambda)$ by tensoring with $V$ and $V^*$ (see Corollary \ref{cor:tp}), the statement follows. 
\end{proof}

\section{Homology functor}\label{sec:homology_functor}
\subsection{}
Recall from \cite{DS} that for any Lie superalgebra $\g$ and an odd element $x\in\g$ such that $[x,x]=0$, one can define
a functor from the category of $\g$-modules to the category of $\g_x$-modules, where $\g_x:=\Ker\ad_x/\Im \ad_x$. This functor sends $M$ to
$M_x:=\Ker x/\Im x$. It is easy to see that this functor is symmetric monoidal: for $\fg$-modules $M,N$ one has a natural isomorphism
$$ M_x\otimes N_x\to (M\otimes N)_x$$
coming from K\"unneth formula, see \cite{Ssd}.

In this paper we are interested in the case when $\g=\mathfrak{gl}(m|n)$
and $x\in\g_1$ is a matrix of rank $1$. Then $\g'=\g_x$ is isomorphic to $\mathfrak{gl}(m-1|n-1)$. 
To see this, use the identification $\g=V\otimes V^*$, choose $x=v\otimes\varphi$ for some $v\in V$ and $\varphi\in V^*$.
Then $\g_x$ can be identified with $\varphi^\perp\otimes v^\perp\simeq \mathfrak{gl}(m-1|n-1)$.
We denote the corresponding functor by $\mathcal H$.

In what follows we denote by $V'$, $V'(\lambda)$ and $(T^{p,q})'$ the analogues of $V$, $V(\lambda)$ and $T^{p,q}$ in $\rp(\g')$.
By a simple calculation one can see that
$\mathcal H(V)\simeq V'$ and $\mathcal H(V^*)\simeq (V')^*$, and hence $$\mathcal H(T^{p,q})=(T^{p,q})'.$$

It is easy to see that the restriction functor maps $\rp^k(\g)$ to $\sVect\otimes\rp^k(\g')$. Hence
the restriction of $\mathcal H$ defines the functor $\mathcal H:\rp^k(\g)\to \sVect\otimes\rp^k(\g')$.
Our next goal is to prove the following result.
 
\begin{theorem}\label{thrm:functorequiv} If $4k<\min(m,n)$, then  $\mathcal H$ maps $\rp^k(\g)$ to $\rp^k(\g')$ and establishes an equivalence of these categories.
\end{theorem}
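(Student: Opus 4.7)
The plan is to reduce the theorem to explicit calculations of $\mathcal{H}$ on the standard objects $V(\lambda)$ and then propagate through the highest weight structure established in Section~\ref{sec:rep_superalgebras}. I proceed in three stages.

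\emph{Stage 1 (targeting $\rp^k(\g')$).} Since $\mathcal{H}$ is SM with $\mathcal{H}(V)=V'$ and $\mathcal{H}(V^*)=(V^*)'$, one has $\mathcal{H}(T^{p,q})=(T^{p,q})'$. Every object of $\rp^k(\g)$ is a subquotient of a direct sum of such $T^{p,q}$ with $p+q\le k$, and applying $\mathcal{H}$ to a short exact sequence produces the natural six-term exact sequence attached to the $\Z/2$-periodic complex $(M,x)$. Tracking subquotients through these sequences shows that $\mathcal{H}(M)$ is built from subquotients of $(T^{p,q})'$ with $p+q\le k$, and therefore lies in $\rp^k(\g')$.

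\emph{Stage 2 (standards and simples).} The central computation is $\mathcal{H}(V(\lambda))\simeq V'(\lambda)$ for $|\lambda|\le k$. The intrinsic description of $V(\lambda)$ as $\pi_\lambda(T^{p,q})\cap\bigcap_\phi\Ker(\phi\colon T^{p,q}\to T^{p-1,q-1})$ is phrased entirely in terms of morphisms in the tensor category (Young projectors and contractions), so the SM functor $\mathcal{H}$ sends it to the analogous description of $V'(\lambda)$ \emph{provided} $\mathcal{H}$ respects the relevant kernels. This is exactly the role of the hypothesis $4k<\min(m,n)$: it ensures the stable-range conclusions of Theorem~\ref{thrm:invariants} apply simultaneously on both sides, and it supplies the extra factor of two needed so that the Koszul-type argument computing $V(\lambda)$ from its $\g_0$-socle (as in the proof of Theorem~\ref{thrm:invariants}(2)) still takes place in a subcategory on which $\mathcal{H}$ is exact. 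With $\mathcal{H}(V(\lambda))=V'(\lambda)$ in hand, Proposition~\ref{prop:multiplicities} identifies composition multiplicities and pins down the unique simple quotient, yielding $\mathcal{H}(L(\lambda))=L'(\lambda)$.

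\emph{Stage 3 (equivalence).} Corollary~\ref{cor:kacrep} together with Proposition~\ref{prop:multiplicities} presents both $\rp^k(\g)$ and $\rp^k(\g')$ as unions of highest weight categories indexed by the same poset $\{\lambda:|\lambda|\le k\}$, with matching standard multiplicities and matching $\Hom$/$\Ext^1$ pairings between standards and costandards. Since $\mathcal{H}$ sends $V(\lambda)\mapsto V'(\lambda)$ and (by a dual argument using $\check T^{p,q}\simeq T^{p,q}$) $\check V(\lambda)\mapsto \check V'(\lambda)$, inducing isomorphisms on the relevant $\Hom$-spaces, $\mathcal{H}$ is fully faithful on standards and costandards. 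Fully faithfulness on $\rp^k(\g)$ then follows from the highest weight structure, and essential surjectivity is immediate from $\mathcal{H}(L(\lambda))=L'(\lambda)$ together with closure under extensions.

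\emph{Main obstacle.} The delicate point is Stage~2: showing $\mathcal{H}$ commutes with the intersection of kernels defining $V(\lambda)$, despite not being exact in general. The natural route is to isolate an exactness statement for $\mathcal{H}$ on a subcategory wide enough to contain these kernels and their Koszul resolutions; the generous gap $4k<\min(m,n)$ (rather than the $2k<\min(m,n)$ of Theorem~\ref{thrm:invariants}) is precisely what absorbs the cost of the doubled index range needed there. An alternative, perhaps more direct, line is to exhibit a highest weight vector of $V(\lambda)$ whose image in $\mathcal{H}(V(\lambda))$ is non-zero and to match composition series by dimension count; either route should suffice, but the first interlocks more cleanly with the subsequent equivalence argument.
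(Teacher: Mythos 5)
Your plan captures the broad shape — compute $\mathcal H$ on standards and simples, then propagate — but it misplaces the role of the hypothesis $4k<\min(m,n)$, and the gap is real.

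You claim (Stage 2 and ``main obstacle'') that the factor of $4$ instead of $2$ is needed to make the Koszul-type computation of $\mathcal H(V(\lambda))$ work, i.e. to stay inside a region where $\mathcal H$ is exact. This is not where the bound is used. The paper's Lemma~\ref{lem:simple} already establishes $\mathcal H(V(\lambda))\simeq V'(\lambda)$, $\mathcal H(L(\lambda))\simeq L'(\lambda)$, and exactness of $\mathcal H$ on $\rp^k(\g)$ under the weaker hypothesis $2k<\min(m,n)$; the mechanism is not commutation of $\mathcal H$ with kernels (which you correctly worry about, and which fails a priori since $\mathcal H$ is not left exact), but the bootstrapping via Corollary~\ref{cor:tp} and Lemma~\ref{lem:auxequiv}(1): if one of the two boundary maps in the long exact sequence of the complex $(M,x)$ is already known to be injective or surjective, the image short sequence is exact. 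Starting from $\mathcal H(T^{p,q})=(T^{p,q})'$ and tensoring with $V$, $V^*$ inductively, this forces $\mathcal H(V(\lambda))=V'(\lambda)$ without ever invoking exactness of $\mathcal H$ in advance. Your alternative proposal — exhibiting a surviving highest weight vector and counting multiplicities — is also not enough by itself, since $\mathcal H$ may kill or create composition factors in principle; the inductive exact-sequence argument is what rules that out.

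The bound $4k<\min(m,n)$ enters only at the full-faithfulness step, and the paper's route is different from your Stage 3. Writing $\Hom_\g(M,N)\simeq\Hom_\g(\triv,M^*\otimes N)$, one observes that $M,N\in\rp^k(\g)$ forces $M^*\otimes N\in\rp^{2k}(\g)$, so Corollary~\ref{cor:inv} (which works in $\rp^\ell(\g)$ for $2\ell\leq\min(m,n)$, via socle preservation, Lemma~\ref{lem:ext} and Corollary~\ref{cor:socle}) requires $4k\leq\min(m,n)$. Your Stage 3 asserts full faithfulness ``follows from the highest weight structure'' given agreement on $\Hom/\Ext^1$ between standards and costandards, but this does not obviously yield full faithfulness on arbitrary objects and does not explain the $4k$ constant at all; and essential surjectivity is not ``immediate from closure under extensions'' — it uses exactness and full faithfulness to lift arbitrary subobjects and quotients of $\mathcal H(M)$, as in Lemma~\ref{lem:essur}. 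In short: the statement is provable by roughly your outline, but the actual engine (Lemma~\ref{lem:auxequiv}(1) plus the $\Hom(\triv,M^*\otimes N)$ trick) is absent, and your explanation of the $4k$ hypothesis is incorrect.
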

  
The proof will be done in several steps. 
First, let us prove the following general statement.

\begin{lemma}\label{lem:auxequiv} Consider an exact sequence
$$0\to A\to B\to C\to 0$$
in the category $\rp(\g)$, and the sequence in $\rp(\g')$ 
\begin{equation}\label{seq2}
0\to \mathcal H(A)\to \mathcal H(B)\to \mathcal H(C)\to 0
\end{equation}
obtained by application of $\mathcal H$. 
\begin{enumerate}
\item If $\mathcal H(A)\to \mathcal H(B)$ is an injection or  $\mathcal H(B)\to \mathcal H(C)$ is a surjection, then
(\ref{seq2}) is exact.
\item For any simple $S$ in $\rp(\g')$ we have
$$[\mathcal H(B):S]-[\mathcal H(B):\Pi S]=[\mathcal H(A):S]+[\mathcal H(C):S]-[\mathcal H(A):\Pi S]-[\mathcal H(B):\Pi S].$$
\end{enumerate}
\end{lemma}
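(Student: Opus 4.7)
The plan is to view $\mathcal H$ as the homology functor of a $\Z/2$-graded complex. Since $x$ is odd with $x^{2}=\tfrac12[x,x]=0$, any $\g$-module $M$ becomes a $\Z/2$-graded complex with differential $x$, and $\mathcal H(M)=\ker x/\Im x$ is its homology. Because the differential is odd, the connecting morphism in the associated long exact sequence will shift parity. Applying the snake lemma to the short exact sequence of complexes $0\to A\to B\to C\to 0$ I obtain a hexagonal long exact sequence
\[
\mathcal H(A)\to\mathcal H(B)\to\mathcal H(C)\xrightarrow{\delta_0}\Pi\mathcal H(A)\to\Pi\mathcal H(B)\to\Pi\mathcal H(C)\xrightarrow{\delta_1}\mathcal H(A),
\]
which closes cyclically. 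This hexagon is the one nontrivial tool; everything else is extraction.

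For part (1), suppose first that $\mathcal H(A)\to\mathcal H(B)$ is injective. Exactness at $\mathcal H(A)$ then forces $\delta_1=0$, and exactness at $\Pi\mathcal H(C)$ yields that $\Pi\mathcal H(B)\to\Pi\mathcal H(C)$ is surjective, equivalently that $\mathcal H(B)\to\mathcal H(C)$ is surjective. Combined with the automatic middle exactness at $\mathcal H(B)$ coming from the LES, this gives that \eqref{seq2} is short exact. The case where $\mathcal H(B)\to\mathcal H(C)$ is surjective is entirely symmetric: it forces $\delta_0=0$, hence injectivity of $\Pi\mathcal H(A)\to\Pi\mathcal H(B)$, which is the same as injectivity of $\mathcal H(A)\to\mathcal H(B)$.

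For part (2), the hexagonal sequence has zero Euler characteristic, so for any simple object $S$ of $\rp(\g')$ the alternating sum of multiplicities of $S$ around the hexagon vanishes:
\[
[\mathcal H(A):S]-[\mathcal H(B):S]+[\mathcal H(C):S]-[\Pi\mathcal H(A):S]+[\Pi\mathcal H(B):S]-[\Pi\mathcal H(C):S]=0.
\]
Substituting the identity $[\Pi M:S]=[M:\Pi S]$ and rearranging then produces the stated formula. The only genuine technical point in the whole argument is the rigorous construction of the six-term hexagonal sequence with correct parity shifts (which I expect to be the main obstacle, though it is standard homological algebra for $\Z/2$-graded complexes); once this is in place both parts fall out as immediate consequences.
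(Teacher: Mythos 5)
Your proof is correct, and it is close in spirit to the paper's argument but takes a slightly different and more elementary route. The paper's proof invokes the additional fact that every object of $\rp(\g)$ carries a $\Z$-grading compatible with the canonical $\Z$-grading of $\g=\g_{-1}\oplus\g_0\oplus\g_1$; since $x\in\g_1$ is a degree-one operator, $(A,x),(B,x),(C,x)$ become \emph{bounded} $\Z$-graded complexes, and the ordinary long exact sequence of cohomology terminates on both ends, which makes both parts of the lemma routine. You instead use only the $\Z/2$-grading, obtaining the periodic six-term hexagon with parity-shifting connecting maps. This is more economical in hypotheses, but then you must justify the Euler-characteristic step for a \emph{cyclic} exact sequence: for an exact sequence of even length $A_0\to\cdots\to A_{2n-1}\to A_0$ one has $\sum_i(-1)^i[A_i]=0$ (write $[A_i]=[K_i]+[K_{i+1}]$ with $K_i=\ker(A_i\to A_{i+1})$ and telescope; evenness of the length is essential for the reindexing to preserve signs). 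Since your hexagon has length six, this applies, and both parts (1) and (2) follow as you describe. One small remark: the formula as printed in the lemma ends with $[\mathcal H(B):\Pi S]$, but your derivation --- correctly --- yields $[\mathcal H(C):\Pi S]$ in that position; this is a typo in the statement, as one can verify by taking $C=0$, and your computation is the right one.
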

\begin{proof} Note that all modules in $\rp(\g)$ have a $\mathbb Z$-grading compatible with the canonical $\mathbb Z$-grading of $\g$. Hence $(A,x)$, $(B,x)$ and
$(C,x)$ can be considered as complexes. Then the statement is a direct conseqence of the long exact sequence of cohomology.
\end{proof}

\begin{lemma}\label{lem:simple} If $2k<\min(m,n)$, then
\begin{enumerate}
\item\label{itm:simple1} $\mathcal H(V(\lambda))\simeq V'(\lambda)$ for any 
$V(\lambda)$ in $\rp^k(\g)$,
\item\label{itm:simple2} $\mathcal H(L(\lambda))\simeq L'(\lambda)$ for any 
simple $L(\lambda)$ in $\rp^k(\g)$,
\item\label{itm:simple3} $\mathcal H(\rp^k(\g))\subset \rp^k(\g')$ and 
$\mathcal H:\rp^k(\g)\to \rp^k(\g')$ is an exact functor.
\end{enumerate}
\end{lemma}
\begin{proof} First, we observe that \eqref{itm:simple1} follows easily from 
Corollary \ref{cor:tp} and Lemma \ref{lem:auxequiv} since $\mathcal H$ is an SM 
functor.

We prove \eqref{itm:simple2} and \eqref{itm:simple3} by induction on $k$ 
assuming that both statements are true for $s<k$. For $k=0,1$ the first 
statement is trivial and the second follows
from semisimplicity of the involved categories. 

Consider the exact sequence
$$0\to I(\lambda)\xrightarrow{\tau} V(\lambda)\xrightarrow {\sigma} L(\lambda)\to 0$$
in $\rp^k(\g)$ and the similar exact sequence
$$0\to I'(\lambda)\xrightarrow{\tau'} V'(\lambda)\xrightarrow{\sigma'} L'(\lambda)\to 0$$
in $\rp^k(\g')$.
It follows from the construction of $I(\lambda)$ in the proof of Theorem \ref{thrm:invariants},
that $\tau'=\mathcal H(\tau)$. By the induction assumption we have 
$\mathcal H(I(\lambda))\simeq I'(\lambda)$ and Lemma \ref{lem:auxequiv} 
ensures that
$$0\to I'(\lambda)\xrightarrow{\tau'} V'(\lambda)\xrightarrow{\sigma'}\mathcal H(L'(\lambda))\to 0$$
is exact. Therefore $\mathcal H(L(\lambda))\simeq L'(\lambda)$.

Now, when we have \eqref{itm:simple2} for $\rp^k(\g)$, \eqref{itm:simple3} 
follows by induction on length and Lemma~\ref{lem:auxequiv}.
\end{proof}

\begin{lemma}\label{lem:ext} Let $2k<\min{(m,n)}$. If an exact sequence 
$$0\to L(\lambda)\to M\to L(\mu)\to 0$$
in $\rp^k(\g)$ does not split, then the exact sequence
$$0\to L'(\lambda)\to \mathcal H(M)\to L'(\mu)\to 0$$
does not split in  $\rp^k(\g')$.
\end{lemma}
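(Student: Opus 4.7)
My plan is to argue by contradiction: assume the sequence $0\to L'(\lambda)\to\mathcal H(M)\to L'(\mu)\to 0$ splits in $\rp^k(\g')$. By Lemma~\ref{lem:simple} the functor $\mathcal H$ is already known to be exact on $\rp^k(\g)$ and to carry $L(\lambda), L(\mu)$ to $L'(\lambda), L'(\mu)$, so exactness of this sequence is automatic and what must be refuted is only the existence of a section $L'(\mu)\to \mathcal H(M)$.

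The first step is to reduce to a canonical form of the extension. Using the translation functors of Lemma~\ref{lem:GS}(a),(b), which are block equivalences (hence preserve $\Ext^1$) and which intertwine with $\mathcal H$ because the latter is SM with $\mathcal H(V)=V'$ and $\mathcal H(V^*)=(V')^*$, I may move $\lambda$ and $\mu$ within their common block to a convenient configuration. The cap-diagram combinatorics of Proposition~\ref{prop:multinkac} together with Corollary~\ref{cor:kacrep} then show that a nontrivial class in $\Ext^1(L(\mu),L(\lambda))$ (with $\lambda\ne\mu$) is realized concretely as a length-two subquotient of the costandard $\check V(\lambda)$ -- namely the preimage of the unique copy of $L(\mu)$ lying one layer above the socle $L(\lambda)$. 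In particular, $M$ is identified with this specific subquotient; the self-extension case $\lambda=\mu$, should it arise, is treated by the same scheme applied to an appropriate uniserial subquotient of a larger costandard singled out by the translation-functor reduction.

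The second step is to apply $\mathcal H$ directly to this presentation. Since $\mathcal H$ commutes with contragredient duality -- both functors being built from the negative super-transpose involution of $\g$, which restricts to the corresponding involution of $\g'=\g_x$ -- and since $\mathcal H(V(\lambda))=V'(\lambda)$ by Lemma~\ref{lem:simple}(1), we have $\mathcal H(\check V(\lambda))\simeq\check V'(\lambda)$. The exactness of $\mathcal H$ and its preservation of simples (Lemma~\ref{lem:simple}(2),(3)) then identify $\mathcal H(M)$ with the analogous length-two subquotient of $\check V'(\lambda)$ in $\rp^k(\g')$, whose socle is $L'(\lambda)$ and whose top is $L'(\mu)$; this subquotient is nonsplit for the very same cap-diagram reason applied in the smaller superalgebra, contradicting the splitting assumption.

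The main obstacle I anticipate is in Step~1: one must verify carefully that $\mathcal H$ intertwines the translation functors of Lemma~\ref{lem:GS} under the shift of core diagrams induced by $(m,n)\mapsto(m-1,n-1)$, so that the cap-diagram description of $\Ext^1$-generators on the $\g$-side transports to $\g'$ without modification in the range $|\lambda|,|\mu|\le k$ we care about. A minor but unavoidable subtlety is the parity bookkeeping recorded in Lemma~\ref{lem:auxequiv}(2); this is already settled by Lemma~\ref{lem:simple}(2), which pins down $\mathcal H(L(\nu))\simeq L'(\nu)$ (rather than $\Pi L'(\nu)$) on each relevant composition factor.
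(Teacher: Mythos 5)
The paper's proof is direct: when $\lambda<\mu$ the nonsplit extension $M$ has simple top $L(\mu)$, hence is a highest weight module, hence (Corollary~\ref{cor:kacrep}(1), using $M\in\rp^k(\g)$) a quotient of $V(\mu)$; applying the exact SM functor $\mathcal H$ gives a quotient of the indecomposable module $V'(\mu)$, so $\mathcal H(M)$ is indecomposable. The case $\mu<\lambda$ is handled by contragredient duality. Your proposal takes a genuinely different route, via cap diagrams and subquotients of costandard modules, and it has a gap.

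The gap is in your central claim that a nontrivial class in $\Ext^1(L(\mu),L(\lambda))$ is realized as a length-two subquotient of $\check V(\lambda)$. This can only happen when $\mu<\lambda$: the costandard module $\check V(\lambda)$, like the Kac module $K(\lambda)$, has composition factors $L(\nu)$ only with $\nu\leq\lambda$, so $L(\mu)$ does not occur in $\check V(\lambda)$ at all when $\lambda<\mu$. In that case it is the standard module $V(\mu)$ (with $M$ as a quotient) that carries the extension, and your argument does not address it. The translation-functor reduction cannot repair this, since block equivalences preserve $\Ext^1$ and hence the BGG ordering; you cannot translate $\lambda<\mu$ into $\mu<\lambda$.

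Two secondary issues. First, the claimed compatibility $\mathcal H\circ\check{\,}\simeq\check{\,}\circ\mathcal H$ (which the paper also silently invokes for its second case, so it is fair to use) is argued incorrectly: the negative supertranspose $\sigma$ does not fix $x$ or any scalar multiple of it -- it moves the upper-right block to the lower-left block -- so $\sigma$ does not ``restrict to'' an involution of $\g'=\g_x$ in the way you describe; a correct argument must either conjugate back or use that $(M^*)_x\simeq(M_x)^*$ at the level of complexes. Second, the assertion that there is a unique copy of $L(\mu)$ one Loewy layer above the socle of $\check V(\lambda)$ amounts to $\dim\Ext^1(L(\mu),L(\lambda))\leq 1$, which is not established at this point in the paper and should be justified rather than asserted. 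Your flag of the self-extension case $\lambda=\mu$ is a reasonable caution (the paper silently excludes it), but your proposed treatment of it is too vague to evaluate.
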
 
\begin{proof} We have only two possibilities, $\lambda<\mu$ or $\mu<\lambda$. In 
the former case, Proposition \ref{prop:kacrep}\eqref{itm:kacrep1} and the 
assumption of the lemma implies
that $M$ is a highest weight module with highest weight $\mu$, and thus is a quotient of $V(\mu)$. Hence $\mathcal H(M)$ is a 
quotient of $V'(\mu)=\mathcal H(V(\mu))$ which is indecomposable. The second case can be reduced to the first one using duality.
\end{proof}

\begin{corollary}\label{cor:socle} Let $M$ be an object in $\rp^k(\g)$ with $2k\leq \min(m,n)$. Then the natural map
$$\phi:\mathcal H(\operatorname{soc}M)\to \operatorname{soc}(\mathcal H(M))$$
is an isomorphism.
\end{corollary}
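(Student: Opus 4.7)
My plan is to split the claim that $\phi$ is an isomorphism into injectivity and surjectivity. Injectivity is essentially formal: exactness of $\mathcal H$ on $\rp^k(\g)$ (Lemma~\ref{lem:simple}(3)) turns the inclusion $\operatorname{soc} M\hookrightarrow M$ into an injection $\mathcal H(\operatorname{soc} M)\hookrightarrow\mathcal H(M)$, and since $\operatorname{soc} M$ is semisimple while $\mathcal H$ carries simple objects to simple objects (Lemma~\ref{lem:simple}(2)), $\mathcal H(\operatorname{soc} M)$ is semisimple and therefore lands inside $\operatorname{soc}(\mathcal H(M))$.

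Surjectivity I plan to prove by induction on the length of $M$, with Lemma~\ref{lem:ext} as the key non-trivial ingredient. The base case of a simple $M$ is immediate. For the inductive step, pick a simple submodule $L'(\mu)\subset\operatorname{soc}(\mathcal H(M))$ and assume for contradiction that $L'(\mu)\not\subset\mathcal H(\operatorname{soc} M)$. Then $L'(\mu)$ maps injectively to $\mathcal H(M/\operatorname{soc} M)$; by the inductive hypothesis,
\begin{equation*}
\operatorname{soc}(\mathcal H(M/\operatorname{soc} M))\;=\;\mathcal H(\operatorname{soc}(M/\operatorname{soc} M))\;=\;\bigoplus_j L'(\nu_j),
\end{equation*}
where $\operatorname{soc}(M/\operatorname{soc} M)=\bigoplus_j L(\nu_j)$, and simplicity of $L'(\mu)$ forces its image to coincide with a single summand $L'(\nu_{j_0})$, so $\mu=\nu_{j_0}$. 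Let $W\subset M$ be the preimage of the corresponding copy of $L(\nu_{j_0})\subset M/\operatorname{soc} M$; the extension
\begin{equation*}
0\to\operatorname{soc} M\to W\to L(\nu_{j_0})\to 0
\end{equation*}
is non-split, since a splitting would realize $L(\nu_{j_0})$ as a simple submodule of $M$ outside $\operatorname{soc} M$. Unwinding definitions, $L'(\mu)\subset\mathcal H(M)$ actually sits inside $\mathcal H(W)$ and maps isomorphically onto the top quotient, giving a splitting of
\begin{equation*}
0\to\mathcal H(\operatorname{soc} M)\to\mathcal H(W)\to L'(\nu_{j_0})\to 0.
\end{equation*}

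The concluding step, which I expect to be the main point of care, converts this splitting into a violation of Lemma~\ref{lem:ext}. Decomposing $\operatorname{soc} M=\bigoplus_i L(\lambda_i)$, the non-zero class $[W]\in\bigoplus_i\mathrm{Ext}^1(L(\nu_{j_0}),L(\lambda_i))$ has a non-zero component in some $\mathrm{Ext}^1(L(\nu_{j_0}),L(\lambda_{i_0}))$; pushing $W$ out along $\operatorname{soc} M\twoheadrightarrow L(\lambda_{i_0})$ yields a non-split two-step extension $0\to L(\lambda_{i_0})\to W'\to L(\nu_{j_0})\to 0$. The kernel of the induced map $\mathcal H(W)\twoheadrightarrow\mathcal H(W')$ equals $\bigoplus_{i\ne i_0}L'(\lambda_i)\subset\mathcal H(\operatorname{soc} M)$, so the complementary copy of $L'(\nu_{j_0})$ furnished by the splitting of $\mathcal H(W)$ survives as a section of $\mathcal H(W')\twoheadrightarrow L'(\nu_{j_0})$, splitting $\mathcal H(W')$ and directly contradicting Lemma~\ref{lem:ext}. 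The only delicate point is the case $\nu_{j_0}=\lambda_{i_0}$, where $W'$ has isomorphic simple top and bottom and one must check that the surviving copy of $L'(\nu_{j_0})$ inside $\mathcal H(W')$ really is disjoint from $L'(\lambda_{i_0})$; but the section property guarantees that its image is non-trivial in the top quotient, hence cannot be contained in the kernel $L'(\lambda_{i_0})$, so the splitting is genuine.
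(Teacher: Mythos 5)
Your proof follows the same route the paper indicates (injectivity from exactness and Lemma~\ref{lem:simple}, surjectivity from Lemma~\ref{lem:ext}), and the elaboration of the surjectivity step — reducing to a nonsplit extension $0\to L(\lambda_{i_0})\to W'\to L(\nu_{j_0})\to 0$ whose image under $\mathcal H$ splits — is the right way to make the paper's terse appeal to Lemma~\ref{lem:ext} precise.

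One step is stated slightly too strongly and, read literally, does not hold: a simple submodule of a finite semisimple object $\bigoplus_j L'(\nu_j)$ need not coincide with one of the chosen summands $L'(\nu_{j_0})$; it may be a ``diagonal'' copy inside the $\mu$-isotypic part, in which case $L'(\mu)$ does \emph{not} sit inside $\mathcal H(W)$ for your choice of $W$, and the subsequent splitting of $0\to\mathcal H(\operatorname{soc}M)\to\mathcal H(W)\to L'(\nu_{j_0})\to 0$ fails. The repair is short: because $\mathcal H$ is exact, $\C$-linear, and sends simples to simples (Lemma~\ref{lem:simple}), the induced map $\Hom(L(\mu),\operatorname{soc}(M/\operatorname{soc}M))\to\Hom(L'(\mu),\mathcal H(\operatorname{soc}(M/\operatorname{soc}M)))$ is injective and both sides have dimension equal to the multiplicity of $L(\mu)$, hence it is an isomorphism. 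Therefore the image of $L'(\mu)$ in $\mathcal H(M/\operatorname{soc}M)$ equals $\mathcal H(\iota)$ for a unique embedding $\iota:L(\mu)\hookrightarrow\operatorname{soc}(M/\operatorname{soc}M)$; take $L(\nu_{j_0})$ to be $\iota(L(\mu))$ (which need not be one of the originally listed $L(\nu_j)$), and the rest of your argument, including the push-out to $W'$ and the handling of the case $\nu_{j_0}=\lambda_{i_0}$, goes through as written.
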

\begin{proof} By exactness of $\mathcal H$ and Lemma \ref{lem:simple}, we know that $\phi$ is injective.
Surjectivity of $\phi$ is a consequence of Lemma \ref{lem:ext}.
\end{proof}

\begin{corollary}\label{cor:inv} For all $M\in \rp^k(\g)$ with $2k\leq \min(m,n)$ we have an isomorphism 
$$\operatorname {Hom}_\g(\mathbb C,M)\simeq \operatorname{Hom}_{\g'}(\mathbb C,\mathcal H(M)).$$
\end{corollary}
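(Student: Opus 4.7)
The plan is to reduce the claimed isomorphism to a statement about socles and then invoke Corollary \ref{cor:socle} together with Lemma \ref{lem:simple}(2). The trivial module $\mathbb C$ is simple, so any nonzero $\g$-homomorphism $\mathbb C\to M$ lands in $\operatorname{soc}(M)$; hence
$$\operatorname{Hom}_\g(\mathbb C,M)=\operatorname{Hom}_\g(\mathbb C,\operatorname{soc}(M)),$$
and likewise $\operatorname{Hom}_{\g'}(\mathbb C,\mathcal H(M))=\operatorname{Hom}_{\g'}(\mathbb C,\operatorname{soc}(\mathcal H(M)))$. This is the only observation needed beyond what has already been proved.

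Next I would apply Corollary \ref{cor:socle}, which provides a canonical isomorphism $\mathcal H(\operatorname{soc}(M))\xrightarrow{\sim}\operatorname{soc}(\mathcal H(M))$ of $\g'$-modules. Consequently it suffices to show
$$\operatorname{Hom}_\g(\mathbb C,\operatorname{soc}(M))\simeq \operatorname{Hom}_{\g'}(\mathbb C,\mathcal H(\operatorname{soc}(M))).$$
Since $\operatorname{soc}(M)$ is a semisimple object of $\rp^k(\g)$, I would write $\operatorname{soc}(M)\simeq\bigoplus_\lambda L(\lambda)^{\oplus n_\lambda}$ with $|\lambda|\leq k$. By Lemma \ref{lem:simple}(2), $\mathcal H(L(\lambda))\simeq L'(\lambda)$, so that
$$\mathcal H(\operatorname{soc}(M))\simeq \bigoplus_\lambda L'(\lambda)^{\oplus n_\lambda}.$$
In particular $\mathcal H(L(0))\simeq L'(0)$, i.e.\ $\mathcal H$ sends the trivial $\g$-module to the trivial $\g'$-module. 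Both Hom spaces therefore compute the multiplicity $n_0$ of the trivial summand, and the isomorphism follows.

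The serious content has already been absorbed into Corollary \ref{cor:socle}, so the present corollary is a formal consequence and there is no substantive obstacle; the only subtlety is the bookkeeping observation that $\mathcal H$ induces a bijection on isomorphism classes of simples in $\rp^k(\g)$ compatible with $L(\lambda)\mapsto L'(\lambda)$, which is precisely Lemma \ref{lem:simple}(2).
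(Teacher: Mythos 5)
Your proof is correct and follows exactly the paper's argument: reduce both Hom spaces to their socles, apply Corollary \ref{cor:socle} to identify $\mathcal H(\operatorname{soc}M)$ with $\operatorname{soc}(\mathcal H(M))$, and use Lemma \ref{lem:simple}(2) to match the trivial multiplicities. The paper states this more tersely but the reasoning is the same.
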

\begin{proof} We note that
$$\operatorname {Hom}_\g(\mathbb C,M)=\operatorname {Hom}_\g(\mathbb C,\operatorname{soc}M).$$
Hence the statement follows from Corollary  \ref{cor:socle} and Lemma 
\ref{lem:simple}\eqref{itm:simple2}.
\end{proof}

\begin{corollary}\label{cor:ff} If $4k<\min(m,n)$, then the functor $\mathcal H:\rp^k(\g)\to\rp^k(\g')$ is fully faithful.
\end{corollary}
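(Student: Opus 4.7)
The plan is to use the standard adjunction trick: reduce $\Hom$ to invariants using duality, then apply Corollary~\ref{cor:inv}. Since $\mathcal H$ is symmetric monoidal and $\rp(\g)$ is rigid, $\mathcal H$ commutes with duals, which will allow the passage to go through cleanly.

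\textbf{Step 1: Reformulate Hom via invariants.} For any $M, N\in\rp^k(\g)$, one has a canonical isomorphism
$$\Hom_\g(M,N)\simeq \Hom_\g(\triv, M^*\otimes N)=(M^*\otimes N)^\g.$$
Similarly in $\rp(\g')$. Note that $M^*\otimes N$ is a subquotient of finite direct sums of modules of the form $T^{p,q}$ with $p+q\leq 2k$, so $M^*\otimes N\in \rp^{2k}(\g)$.

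\textbf{Step 2: Apply Corollary~\ref{cor:inv}.} Since the hypothesis $4k<\min(m,n)$ means $2\cdot(2k)\leq\min(m,n)$, Corollary~\ref{cor:inv} applied to $M^*\otimes N\in\rp^{2k}(\g)$ yields
$$(M^*\otimes N)^\g \simeq \mathcal H(M^*\otimes N)^{\g'}.$$
The SM structure on $\mathcal H$ and the fact that it commutes with duals on dualizable objects (rigidity of $\rp(\g)$) give $\mathcal H(M^*\otimes N)\simeq \mathcal H(M)^*\otimes \mathcal H(N)$, hence
$$\mathcal H(M^*\otimes N)^{\g'}=(\mathcal H(M)^*\otimes \mathcal H(N))^{\g'}\simeq \Hom_{\g'}(\mathcal H(M),\mathcal H(N)).$$

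\textbf{Step 3: Identify with the natural map.} Composing these canonical isomorphisms gives a bijection $\Hom_\g(M,N)\simeq\Hom_{\g'}(\mathcal H(M),\mathcal H(N))$; a standard check (tracing through the unit/counit of the duality and the monoidal coherence data for $\mathcal H$) shows that this bijection coincides with the map $f\mapsto \mathcal H(f)$ induced by the functor $\mathcal H$. Hence $\mathcal H$ is fully faithful on $\rp^k(\g)$.

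The main thing to verify carefully is the compatibility in Step~3 — that the abstract isomorphism built from $\Hom=(\cdot)^\g$ of the inner hom really does agree with $\mathcal H$ on morphisms; but this is formal for any SM functor between rigid symmetric monoidal categories. The numerical bound $4k<\min(m,n)$ is used solely to make $M^*\otimes N$ land in a range where Corollary~\ref{cor:inv} applies, i.e., in $\rp^{2k}(\g)$ with $2\cdot 2k\leq \min(m,n)$.
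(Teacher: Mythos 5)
Your proof is correct and takes essentially the same route as the paper's: reduce $\Hom_\g(M,N)$ to $\Hom_\g(\triv, M^*\otimes N)$, observe $M^*\otimes N\in\rp^{2k}(\g)$, and invoke Corollary~\ref{cor:inv} with $2k$ in place of $k$, the hypothesis $4k<\min(m,n)$ supplying the needed bound. The paper leaves the Step~3 compatibility check implicit, but your explicit acknowledgement of it is a reasonable elaboration rather than a departure.
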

\begin{proof} Note that if $M,N\in \rp^k(\g)$, then $\Hom_\g(M,N)\simeq \Hom_\g(\mathbb C,M^*\otimes N)$.
Therefore, the statement is a direct consequence of Corollary \ref{cor:inv}.
\end{proof}

\begin{lemma}\label{lem:essur} If $4k<\min(m,n)$, then the functor $\mathcal H:\rp^k(\g)\to\rp^k(\g')$ is essentially surjective.
\end{lemma}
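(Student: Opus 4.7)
Given $M \in \rp^k(\g')$, the goal is to produce $N \in \rp^k(\g)$ with $\mathcal H(N) \simeq M$. My strategy is to realize $M$ as the kernel of a morphism between objects already known to lie in the essential image of $\mathcal H$, and then use fullness of $\mathcal H$ to lift this morphism back to $\rp^k(\g)$. The essential image of $\mathcal H$ automatically contains every finite direct sum of the objects $(T^{p,q})'$ with $p+q \le k$, since $\mathcal H(T^{p,q}) = (T^{p,q})'$ by the computation at the start of Section \ref{sec:maincategory}.

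More precisely, I would first produce a two-term copresentation
\begin{equation}
0 \to M \to I_0 \xrightarrow{\phi} I_1 \notag
\end{equation}
in $\rp^k(\g')$, with $I_0$ and $I_1$ finite direct sums of $(T^{p,q})'$'s ($p+q \le k$). Writing $I_j = \mathcal H(J_j)$ for the corresponding direct sums $J_j$ in $\rp^k(\g)$, the fullness of $\mathcal H$ (Corollary \ref{cor:ff}) lifts $\phi$ to $\psi : J_0 \to J_1$, and the exactness of $\mathcal H$ (Lemma \ref{lem:simple}(3)) then yields $\mathcal H(\ker \psi) \simeq \ker \phi = M$. Since $\ker \psi$ sits inside $J_0$, it belongs to $\rp^k(\g)$, and so $M$ is exhibited in the essential image.

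\textbf{The main obstacle} is the existence of this copresentation, which amounts to knowing that $\bigoplus_{p+q \le k} (T^{p,q})'$ is an injective cogenerator of $\rp^k(\g')$, paralleling the analogous fact for $\T^k_{\tg}$ recalled in Section \ref{sec:rep_superalgebras}. One could verify this either by identifying the injective hull of each simple $L'(\lambda)$ with $|\lambda|\le k$ as a direct summand of $(T^{p,q})'$ for $p=|\lambda^\circ|,\,q=|\lambda^\bullet|$, using socle-filtration arguments in the spirit of \cite{DPS}, or by transferring the cogenerator structure from $\T^k_{\tg}$ via the exact SM functor $\mathcal R_{\g'}$ of Theorem \ref{thrm:invariants}, exploiting that $\mathcal R_{\g'}(\tilde T^{p,q}) = (T^{p,q})'$. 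Once the copresentation is in hand, the remainder of the argument is formal: all the heavy lifting (exactness, fullness, bijection on simples) has already been accomplished in Lemma \ref{lem:simple} and Corollary \ref{cor:ff}.
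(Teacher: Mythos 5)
Your plan correctly reduces the question to producing, for each $M\in\rp^k(\g')$, a two-term copresentation by objects already known to be in the essential image, and you accurately flag the ``main obstacle'': that $\bigoplus_{p+q\le k}(T^{p,q})'$ should be an injective cogenerator of $\rp^k(\g')$. But this obstacle is left unresolved, and neither of your two suggested fixes is on solid ground. The first (constructing the injective hull of $L'(\lambda)$ inside $(T^{p,q})'$ by a socle-filtration argument ``in the spirit of \cite{DPS}'') is a nontrivial claim about the finite-rank categories $\rp^k(\gl(m{-}1|n{-}1))$, not something already in the paper; note that what the paper actually establishes about the objects in $I(\cD_t)\cap\cV_t^k$ (the remark following Corollary~\ref{cor:highestweight}) is that they are \emph{tilting} in the highest weight category $\cV_t^k$, which in a general highest weight category is weaker than injective. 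The second suggestion (transferring injectivity from $\T^k_\tg$ along the exact functor $\mathcal R_{\g'}$) does not work as stated, since an exact functor has no reason to preserve injectives; injectives are preserved by right adjoints that have exact left adjoints, and no such adjunction is exhibited. Moreover the categories $\T^k_\tg$ and $\rp^k(\g')$ are genuinely different (the former does not depend on $t$), so one cannot simply identify them.

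The paper sidesteps the injective-cogenerator question entirely and uses a lighter formal fact: an exact, fully faithful functor between finite-length abelian categories that is bijective on isomorphism classes of simples has essential image closed under taking subobjects and quotients (by induction on length, lifting a simple subobject by full faithfulness and passing to quotients by exactness). Since every $M'\in\rp^k(\g')$ is by definition a subquotient of $T'=\bigoplus(T^{p_i,q_i})'=\mathcal H(\bigoplus T^{p_i,q_i})$, and the essential image contains $T'$ and is closed under subobjects and quotients, one concludes directly. This requires only the ingredients you also use (Lemma~\ref{lem:simple} and Corollary~\ref{cor:ff}) plus exactness, without any claim about injectivity of $(T^{p,q})'$. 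If you want to preserve your copresentation strategy, you would first need to separately prove the injectivity statement (which, if true, would itself likely go through the same closure-under-subquotients argument or through the equivalence $\rp^k(\g)\simeq\cV_t^k$ and a study of injectives there), making the route circular or at least strictly harder than the paper's.
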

\begin{proof} We have proved that  $\mathcal H$ is exact and fully faithful and establishes bijection on the isomorphism classes of simple modules.
Therefore for any $M\in \rp^k(\g)$ and any submodule or quotient $N'$ of $\mathcal H(M)$
there exists a submodule (resp. quotient) $N$ of $M$ such that $N'=\mathcal H(N)$. Since every $M'\in\rp^k(\g')$ is a subquotient of
$T'=\bigoplus (T^{p_i,q_i})'$ and $T'=\mathcal H(\bigoplus T^{p_i,q_i})$, the statement follows.
\end{proof}

Corollary \ref{cor:ff} and Lemma \ref{lem:essur} imply that $\mathcal H:\rp^k(\g)\to\rp^k(\g')$ is an equivalence. 
The proof of Theorem \ref{thrm:functorequiv} is complete.

\subsection{Compatability of specialization and homology functors}\label{ssec:homology}


Recall now functors $\mathcal R_\g:\T_\tg\to \rp(\g)$ and $\mathcal R_{\g'}:\T_\tg\to \rp(\g')$.
We claim that there is a morphism of functors $\Psi:\mathcal R_{\g'}\to \mathcal H\circ\mathcal R_{\g}$.
Let $\k$ and $\k'$ be the centralizers in $\tg$ of $\g$ and $\g'$ respectively. Then $\k\subset\k'$, $x\in\k'$ and $\k=\k'_x$.
For any $M\in\T_{\tg}$ we set $\Psi_M$ to be the composition
$$\Psi_M:M^{\k'}\hookrightarrow (M^\k)^x\rightarrow (M^\k)^x/(xM\cap M^\k).$$

This defines a $\otimes$-natural transformation of SM functors
$$\Psi:\mathcal{R}_{\g'}\longrightarrow \mathcal{H}\circ\mathcal{R}_{\g}, \; \; \Psi_M: R_{\g'}(M) \rightarrow  H\circ\mathcal R_{\g}(M)$$

\begin{lemma}\label{lem:equiv1} 
The restriction of $\Psi:\mathcal{R}_{\g'}\to \mathcal{H}\circ\mathcal{R}_{\g}$
to $\T^k_\tg$ is an isomorphism for $2k<\min(m,n)$.
\end{lemma}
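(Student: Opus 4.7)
The plan is to reduce, by exactness and monoidality, to checking the isomorphism on $\tV$ and $\tV_*$, where it becomes a direct linear-algebra computation. First, under the hypothesis $2k<\min(m,n)$, both $\mathcal{R}_{\g'}$ and $\mathcal{H}\circ\mathcal{R}_\g$ are exact as functors $\T^k_\tg\to\rp^k(\g')$: for the restriction functors, Theorem~\ref{thrm:invariants}(1) applied to $\g$ and to $\g'\cong\mathfrak{gl}(m-1|n-1)$ (noting $2k<\min(m,n)$ implies $k<\min(m-1,n-1)$ for $k\geq 1$, while $k=0$ is trivial); and for $\mathcal{H}$ on $\rp^k(\g)$, Lemma~\ref{lem:simple}(3). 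Since $\bigoplus_{p+q\leq k}\tT^{p,q}$ is an injective cogenerator of $\T^k_\tg$, every $M\in\T^k_\tg$ fits into a left-exact sequence $0\to M\to I_0\to I_1$ with the $I_i$ finite direct sums of these cogenerators; applying the two exact functors and comparing kernels via $\Psi$ reduces the problem to showing that $\Psi_{\tT^{p,q}}$ is an isomorphism for $p+q\leq k$.

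Next, since $\Psi$ is a $\otimes$-natural transformation between SM functors, $\Psi_{\tT^{p,q}}$ is identified with the tensor product $\Psi_{\tV}^{\otimes p}\otimes\Psi_{\tV_*}^{\otimes q}$ via the SM-coherence isomorphisms of the two functors. It thus suffices to check that $\Psi_{\tV}$ and $\Psi_{\tV_*}$ are isomorphisms. Pick the standard basis $\tV_0=\langle e_i\rangle_{i\geq 1}$, $\tV_1=\langle f_j\rangle_{j\geq 1}$, so that $V=\langle e_1,\ldots,e_m,f_1,\ldots,f_n\rangle$ and $x=E_{1,m+n}$ sends $f_n\mapsto e_1$ and kills the rest. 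The decomposition $\tV=V'\oplus W'$ with respect to $\g'\oplus\k'$ yields $(\tV)^{\k'}=V'=\langle e_2,\ldots,e_m,f_1,\ldots,f_{n-1}\rangle\subset V$; on the other hand $(\tV)^{\k}=V$, $\Ker(x|_V)=\langle e_1,\ldots,e_m,f_1,\ldots,f_{n-1}\rangle$, and $x\tV\cap V=\langle e_1\rangle=\Im(x|_V)$, so the target $(V)^x/(x\tV\cap V)$ equals $\mathcal{H}(V)$. The map $\Psi_{\tV}$ is then the visible basis-to-basis isomorphism $V'\to \Ker(x|_V)/\langle e_1\rangle$. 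The same reading applied to $\tV_*$ gives $\Psi_{\tV_*}$ as an isomorphism.

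The only nontrivial point is verifying that $\Psi$ is indeed $\otimes$-natural, i.e.\ compatible with the monoidal structures on both functors, so that the reduction to $\tV$ and $\tV_*$ is legitimate. This uses that $x\in\k'$ (so $\k'$-invariants are automatically in the kernel of $x$) together with the K\"unneth-type identification underlying the SM structure of $\mathcal{H}$; once granted, the remainder is routine linear algebra and standard homological bookkeeping.
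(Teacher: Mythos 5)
Your proof is correct, but it departs from the paper's argument in an interesting way, so let me compare. The paper proceeds by induction on $|\lambda|$ over the simple objects $\tV(\lambda)$: starting from $\tV(\emptyset)$, it tensors with $\tV$ or $\tV_*$, applies the short exact sequences of Lemma~\ref{lem:les}/Corollary~\ref{cor:tp} and the SM-compatibility of the three functors, and uses exactness to propagate the isomorphism $\Psi_{\tV(\lambda)}$ to all $\Psi_{\tV(\eta)}$ with $\eta$ obtained by adding a box; it then finishes with induction on length. You instead exploit that $\bigoplus_{p+q\leq k}\tT^{p,q}$ is an injective cogenerator of $\T^k_\tg$: embed $M$ in a two-step left-exact sequence $0\to M\to I_0\to I_1$ with $I_i$ sums of the $\tT^{p,q}$, compare kernels via the five lemma (using exactness of both functors, secured by Theorem~\ref{thrm:invariants}(1) and Lemma~\ref{lem:simple}(3), together with the arithmetic $2k<\min(m,n)\Rightarrow k<\min(m-1,n-1)$ for $k\ge1$), reduce to the tensor generators $\tV,\tV_*$ by $\otimes$-naturality of $\Psi$, and there check the map by hand in coordinates. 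Your reduction to $\tT^{p,q}$ and then to $\tV$, $\tV_*$ is arguably cleaner, since it avoids the box-adding combinatorics and the exact sequences of Lemma~\ref{lem:les} entirely; the paper's route, conversely, is closer in spirit to the inductive arguments used elsewhere in Section~3 and doesn't invoke the injective-cogenerator property explicitly. Both rely on the same two pillars: exactness of the two composite functors in the stated range, and $\otimes$-naturality of $\Psi$ (which the paper also asserts without detailed verification, so your flagging of this point is fair rather than a gap). The linear-algebra computation of $\Psi_{\tV}$ you give -- identifying $\tV^{\k'}=\langle e_2,\dots,e_m,f_1,\dots,f_{n-1}\rangle$ and $(\tV^\k)^x/(x\tV\cap\tV^\k)=\langle e_1,\dots,e_m,f_1,\dots,f_{n-1}\rangle/\langle e_1\rangle$ -- is correct and shows $\Psi_{\tV}$ is the evident isomorphism; the case of $\tV_*$ is symmetric.
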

\begin{proof} Let $2k<\min(m,n)$.

By Theorem \ref{thrm:invariants} and Lemma \ref{lem:simple}, the restrictions of both functors $\mathcal{R}_{\g'},  \mathcal{H}\circ\mathcal{R}_{\g}$ to $\T^k_\tg$ are exact functors. Therefore it is enough to prove the required statement for simple objects in $\T^k_\tg$, and then use induction on the length of objects.

We prove that $\Psi_{\tV(\lambda)}$ is an isomorphism by induction on $s=|\lambda|$. For $|\lambda|=0$, the statement is obvious.
Assume that $\Psi_{\tV(\lambda)}:V'(\lambda)\to \mathcal H(V(\lambda))$ is an isomorphism.
Then $\Psi_{\tV(\lambda)\otimes\tV}:V'(\lambda)\otimes V'\to \mathcal 
H(V(\lambda)\otimes V)$
is also an isomorphism since all involved functors are SM. 
Consider the exact sequence
$$0\to \bigoplus_{\eta\in\lambda+\square}\tV(\eta)\to \tV\otimes \tV(\lambda)\to \bigoplus_{\eta\in\lambda-\blacksquare}\tV(\eta)\to 0.$$
Applying $\mathcal R_{\g'}$ to it we obtain the exact sequence
$$0\to \bigoplus_{\eta\in\lambda+\square}V'(\eta)\to V'\otimes V'(\lambda)\to \bigoplus_{\eta\in\lambda-\blacksquare}V'(\eta)\to 0,$$
applying $ \mathcal H\circ\mathcal R_{\g}$ and  using the induction assumption we get
$$0\to \bigoplus_{\eta\in\lambda+\square}\mathcal H\circ\mathcal R_{\g}(\tV(\eta))\to V'\otimes V'(\lambda)\to \bigoplus_{\eta\in\lambda-\blacksquare}V'(\eta)\to 0.$$
By Lemma \ref{lem:auxequiv} the latter sequence is also exact, which implies
the isomorphism $$\Psi_{V'(\eta)}:V'(\eta)\to\mathcal H\circ\mathcal R_{\g}(\tV(\eta))$$ for all $\eta\in\lambda+\square$. 
Repeating the same argument for tensor product
with $\tV_*$ gives an isomorphism  $V'(\eta)\simeq\mathcal H\circ\mathcal R_{\g}(\tV(\eta))$ for all $\eta\in\lambda+\blacksquare$. Hence we have the statement 
for all $\eta$ such that $|\eta|=s+1$.

\end{proof}

\section{The \texorpdfstring{category $\cV_t$}{new category}}\label{sec:maincategory}
\subsection{A new tensor category}\label{ssec:newtc}
Now we fix an integer $t\in\mathbb Z$ and consider all pairs of
non-negative integers $(m,n)$ with $m-n=t$. We fix $x$ in each $\mathfrak{gl}(m|n)$ and consider SM  functors  
$$\mathcal H_{m,n}:\rp^k(\mathfrak{gl}(m|n))\to \rp^k(\mathfrak{gl}(m-1|n-1))$$
defined as in the previous subsection. Consider the inverse limit
$$\cV^k_t=\lim_{\leftarrow}\rp^k(\mathfrak{gl}(m|n)).$$
Theorem \ref{thrm:functorequiv} implies that $\cV^k_t$ is an abelian category; furthermore it
is equivalent to $\rp^k(\mathfrak{gl}(m|n))$ for sufficiently large $m$ and $n$.

Now observe that for any $k<l$ we have an embedding of abelian categories $\cV^k_t\subset \cV^l_t$, since we have such an embedding 
$\rp^k(\mathfrak{gl}(m|n))\subset \rp^l(\mathfrak{gl}(m|n))$ for 
any $m$ and $n$. So we define a new abelian category
$$\cV_t=\lim_{\rightarrow} \cV^k_t.$$

Next, observe that we have bifunctor $\cV^k_t\times \cV^l_t\to \cV^{k+l}_t $ given by tensor product  
$$\rp^k(\mathfrak{gl}(m|n))\times\rp^l(\mathfrak{gl}(m|n))\to\rp^{k+l}(\mathfrak{gl}(m|n))$$
for sufficiently 
large $m,n$. Therefore, passing to direct limit we get a bifunctor $\cV_t\times \cV_t\to \cV_t$ and the following is straightforward:
\begin{lemma}\label{lem:tnsorabelian} The category $\cV_t$ is a tensor category.
\end{lemma}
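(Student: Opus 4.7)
The plan is to transfer the rigid symmetric monoidal structure from the finite approximations $\rp^k(\mathfrak{gl}(m|n))$ through the inverse limit in $(m,n)$ and then the direct limit in $k$, checking compatibility at each stage. Four items need verification: that $\cV_t$ is abelian; that $\otimes$ is biexact and $\mathbb{C}$-linear with a symmetric monoidal structure; that every object is rigid; and that $\End_{\cV_t}(\triv)=\mathbb{C}$. I would first verify abelianness: each $\cV^k_t$ is abelian because, by Theorem \ref{thrm:functorequiv}, it is equivalent to $\rp^k(\mathfrak{gl}(m|n))$ for all sufficiently large $(m,n)$; and the embeddings $\cV^k_t \hookrightarrow \cV^l_t$ for $k<l$ are exact, since $\rp^k(\mathfrak{gl}(m|n)) \hookrightarrow \rp^l(\mathfrak{gl}(m|n))$ is exact at each finite level. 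Hence kernels and cokernels of any arrow $f: X \to Y$ in $\cV_t$ can be computed inside any $\cV^k_t$ containing both $X$ and $Y$, independently of the choice of $k$.

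Next, I would assemble the symmetric monoidal structure. The bifunctor $\otimes: \cV^k_t \times \cV^l_t \to \cV^{k+l}_t$ extends, by taking the direct limit, to a bifunctor $\otimes: \cV_t \times \cV_t \to \cV_t$. The associativity, symmetry, and unit constraints come from those of each $\rp^{k+l}(\mathfrak{gl}(m|n))$ and are compatible with the transition SM functors $\mathcal{H}_{m,n}$, so they descend through the inverse limit in $(m,n)$ and then the direct limit in $k$. The unit object $\triv$ lies in $\cV^0_t$ and satisfies $\End(\triv)=\mathbb{C}$, since this already holds at each finite stage. Biexactness and $\mathbb{C}$-bilinearity of $\otimes$ are preserved by both limits (filtered colimits of abelian groups being exact), so this step is routine.

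For rigidity, the key input is that each $\mathcal{H}_{m,n}$ is SM and therefore automatically carries dualizable objects, together with their evaluation and coevaluation data, to dualizable objects. Every object of $\rp^k(\mathfrak{gl}(m|n))$ is rigid, and its dual again lies in $\rp^k(\mathfrak{gl}(m|n))$ (since $(T^{p,q})^*\cong T^{q,p}$, so taking duals preserves the filtration by $k$). These duals therefore assemble to a duality functor on $\cV_t$ with compatible evaluation and coevaluation morphisms. I expect no real obstacle here: the substantive work has been absorbed in Theorem \ref{thrm:functorequiv}, and the present lemma reduces to a formal check that the symmetric monoidal structure commutes with the two limit procedures, which is ensured by the fact that every $\mathcal{H}_{m,n}$ is SM.
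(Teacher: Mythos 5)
Your proof is correct and takes the approach the paper implicitly has in mind: the paper declares the lemma "straightforward" and supplies no written proof, and your argument is precisely the expected routine verification — abelianness via the filtered colimit of the abelian $\cV^k_t$ along exact inclusions, the SM structure assembled from the compatible bifunctors $\cV^k_t\times\cV^l_t\to\cV^{k+l}_t$, and rigidity from the fact that each $\mathcal{H}_{m,n}$ is SM and $(T^{p,q})^*\cong T^{q,p}$ keeps duals in the same filtration level. One small redundancy: given the paper's definition of tensor category (rigid symmetric monoidal abelian $\C$-linear with $\End(\triv)=\C$), biexactness of $\otimes$ is automatic from rigidity, so your separate check of biexactness is not needed, though it is not wrong.
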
 \qed

We denote by $V_t$ the inverse limit of the natural objects for $\g=\mathfrak{gl}(m|n)$, such that $m-n=t$. One can immediately see that $\dim(V_t) =t$.

By the universal property of Deligne's category $\cD_t$, there is a unique,
up to unique $\otimes$-isomorphism, SM functor $I:\cD_t\to\cV_t$, carrying the generator $X_t$ to $V_t$.
 
\begin{proposition}\label{prop:deligneconnection} The functor $I:\cD_t\to\cV_t$
is fully faithful.
\end{proposition}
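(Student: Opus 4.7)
The plan is to reduce full faithfulness to a bijection on Hom spaces out of the unit object, then compute both sides explicitly as spaces of invariants.

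First I would use the fact that $\cV_t$ is abelian (hence Karoubian) and that $\cD_t$ is by definition the Karoubian envelope of $\cD_t^\circ$ to reduce the problem to showing that $I|_{\cD_t^\circ}$ is fully faithful. Rigidity (preserved by $I$) then allows me to rewrite $\Hom(X_t^{p,q}, X_t^{r,s})$ as $\Hom(\triv, X_t^{r+q,\,s+p})$ on both sides, so it suffices to show that for all $a, b \geq 0$ the natural map
\[
\Hom_{\cD_t^\circ}(\triv, X_t^{a,b}) \longrightarrow \Hom_{\cV_t}(\triv, V_t^{\otimes a} \otimes (V_t^*)^{\otimes b})
\]
is a bijection.

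Next I would identify the target concretely. Since $V_t^{\otimes a}\otimes(V_t^*)^{\otimes b}\in\cV_t^k$ for $k=a+b$, and $\cV_t$ is the union of the fully embedded subcategories $\cV_t^k$, this Hom space equals $\Hom_{\cV_t^k}(\triv, V_t^{\otimes a}\otimes(V_t^*)^{\otimes b})$. Theorem \ref{thrm:functorequiv} then identifies it with $\Hom_{\mathfrak{gl}(m|n)}(\triv, T^{a,b})$ for any pair $(m,n)$ with $m-n=t$ and $4k<\min(m,n)$, and Lemma \ref{lem:equiv1} together with Lemma \ref{lem:invariants} further identifies this with $\Hom_\tg(\triv, \tT^{a,b})$.

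For the final comparison I would use Deligne's explicit description of $\cD_t^\circ$ via walled Brauer diagrams to get $\dim\Hom_{\cD_t^\circ}(\triv, X_t^{a,b}) = a!\,\delta_{a,b}$, with basis given by the $a!$ perfect matchings pairing $X_t$-strands with $X_t^*$-strands. On the target side, adjunction gives $\Hom_\tg(\triv, \tT^{a,b})\cong\Hom_\tg(\tV^{\otimes b}, \tV^{\otimes a})$, and iterating Lemma \ref{lem:les} starting from $\triv=\tV(\emptyset,\emptyset)$ yields the same dimension $a!\,\delta_{a,b}$. The natural map sends each diagram to the corresponding composition of coevaluation morphisms, which span the target, so surjectivity combined with equality of dimensions will give the required bijection.

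The main obstacle is the dimension count on the target, specifically showing that the $a!$ contractions are linearly independent rather than only that they span. Handling this via $\Hom_\tg(\triv, \tT^{a,b})$ and Lemma \ref{lem:les} reduces it to a clean combinatorial statement (essentially Schur--Weyl duality for $\mathfrak{gl}(\infty)$); one could alternatively invoke the super First Fundamental Theorem for $\mathfrak{gl}(m|n)$ with $m+n$ large, but the route through the ind-category $\T_\tg$ aligns better with the internal structure developed earlier in the paper.
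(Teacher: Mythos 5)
Your reduction to $\Hom(\triv,-)$ on mixed tensor powers is sound, and the idea to compare dimensions on both sides is reasonable. However, the detour through $\T_\tg$ contains a genuine error that breaks the argument. The category $\T_\tg$ is explicitly \emph{not} rigid (the paper says so just before Lemma \ref{lem:les}): the conatural module $\tV_*$ has an evaluation map $\tV\otimes\tV_*\to\triv$ but no coevaluation $\triv\to\tV\otimes\tV_*$, since the would-be canonical element is an infinite sum. Consequently the adjunction $\Hom_\tg(\triv,\tT^{a,b})\cong\Hom_\tg(\tV^{\otimes b},\tV^{\otimes a})$ you invoke does not hold. Worse, the dimension you claim for the target is wrong: one has $\Hom_\tg(\triv,\tT^{a,b})=0$ for $(a,b)\neq(0,0)$ (the socle of $\tT^{a,b}$ consists of $\tV(\lambda)$ with $|\lambda^\circ|=a$, $|\lambda^\bullet|=b$, so $\triv=\tV(\emptyset,\emptyset)$ does not appear), not $a!\,\delta_{a,b}$. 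This is precisely the discrepancy that makes $\Phi:\T_\tg\to\cV_t$ fail to be fully faithful, so the claimed identification $\Hom_{\cV_t}(\triv,V_t^{\otimes a}\otimes(V_t^*)^{\otimes b})\cong\Hom_\tg(\triv,\tT^{a,b})$ via Lemmas \ref{lem:equiv1} and \ref{lem:invariants} is false.

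The fix is to identify the target instead with $\Hom_{\gl(m|n)}(\triv,T^{a,b})$ for $m,n$ large enough, using the equivalence $\cV_t^k\simeq\rp^k(\gl(m|n))$ from Theorem \ref{thrm:functorequiv}. In the finite-dimensional category $\rp(\gl(m|n))$ the natural module \emph{is} rigid, the adjunction does hold, and super Schur--Weyl duality gives $\dim\Hom_{\gl(m|n)}(\triv,T^{a,a})=a!$ for $m,n\geq a$. At that point you need to know the map from the walled Brauer algebra side is an isomorphism, not merely that both sides have the same dimension; this is the Brundan--Stroppel result \cite{BS} that the paper cites, which handles both surjectivity (Sergeev--Berele--Regev duality) and injectivity (faithfulness of the walled Brauer action for large $m,n$) in one stroke. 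The infinite-rank category $\T_\tg$ is the wrong intermediary here precisely because the invariants it would need to supply do not exist in it.
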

\begin{proof} Let $X_t^{p,q}=X_t^{\otimes p}\otimes X_t^{*\otimes q}$. 
It is proved in \cite{BS} 
that if $\g=\mathfrak{gl}(m|n)$, $t=m-n$, then $\operatorname{End}(X_t^{p,q})\to \operatorname{End}(T^{p,q})$ is an isomorphism for 
sufficiently large $m$ and $n$. Note that $X_t^{r,s}, X_t^{r',s'}$ (respectively, $T^{r,s}, T^{r',s'}$) can be realized as direct summands in $X_t^{p,q}$
(respectively, $T^{p,q}$) for a suitable choice of $p,q$, so we also have that   
$\operatorname{Hom}(X_t^{r,s}, X_t^{r',s'})\to \operatorname{Hom}(T^{r,s},T^{r',s'})$ are isomorphisms for sufficiently large $m,n$. 
Therefore, Theorem \ref{thrm:functorequiv} implies that $I: \cD^k_t\to \cV^k_t$ is fully faithful, and hence $I: \cD_t\to \cV_t$ is also fully faithful 
by passing to direct limit.
\end{proof}

Now fix $m$ and $n$ such that $m-n=t$ and consider the SM functor $\mathcal H_{m,n}^{s}=\mathcal H_{m+1,n+1}\circ\dots\circ \mathcal H_{m+s,n+s}$ from
$\rp(\mathfrak{gl}(m+s|n+s))$ to $\rp(\mathfrak{gl}(m|n))$. Then we define functor  $F_{m,n}:\cV_t\to \rp(\mathfrak{gl}(m|n))$ by setting 
$F_{m,n}(M)= H_{m,n}^{s}(M)$ for sufficiently large $s$. It follows from Theorem \ref{thrm:functorequiv} that $H_{m,n}^{s}(M)$ stabilizes.

\begin{lemma}\label{lem:functorgl} The functor $F_{m,n}:\cV_t\to \rp(\mathfrak{gl}(m|n))$ is a SM functor and the composition 
$F_{m,n}\circ I: \cD_t\to \rp(\mathfrak{gl}(m|n))$ is the functor defined uniquely up to isomorphism by universality of $\cD_t$.
\end{lemma}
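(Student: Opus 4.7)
The plan is as follows. I first verify that $F_{m,n}$ is a well-defined functor on $\cV_t$. Any object $M\in\cV_t$ lies in some $\cV_t^k$, which by Theorem~\ref{thrm:functorequiv} is equivalent via $\mathcal H_{m',n'}$ to $\rp^k(\mathfrak{gl}(m'|n'))$ for any $m',n'$ with $m'-n'=t$ and $4k<\min(m',n')$. Under these equivalences, the transition functors $\mathcal H_{m+s,n+s}$ stabilize once $s$ is large enough (relative to $k$), so $\mathcal H_{m,n}^s(M)\in\rp(\mathfrak{gl}(m|n))$ is independent of $s$ up to canonical isomorphism, and the assignment $M\mapsto F_{m,n}(M)$ is manifestly functorial in $M$.

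Next, I would equip $F_{m,n}$ with a SM structure. Each individual $\mathcal H_{m',n'}$ is symmetric monoidal via the K\"unneth isomorphism $M_x\otimes N_x\xrightarrow{\sim}(M\otimes N)_x$ recalled at the start of Section~\ref{sec:maincategory}, and these structures compose to give each $\mathcal H_{m,n}^s$ a canonical SM structure. The compatibility of these SM structures with the stabilization is automatic, since for $s'>s$ the extra factors $\mathcal H_{m+s+1,n+s+1}\circ\cdots\circ\mathcal H_{m+s',n+s'}$ act as equivalences at the appropriate truncation level (Theorem~\ref{thrm:functorequiv}) and preserve the K\"unneth isomorphism. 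Compatibility of $F_{m,n}$ with tensor products of objects lying in different subcategories $\cV_t^k,\cV_t^l$ follows from the definition of the tensor product on $\cV_t$ given just before Lemma~\ref{lem:tnsorabelian}, together with the fact that for sufficiently large $m',n'$ (depending on $k+l$) the tensor product commutes with the stabilizing equivalences. This gives $F_{m,n}$ the structure of a SM functor.

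Finally, I would identify $F_{m,n}\circ I$. By construction $V_t\in\cV_t$ is the inverse limit of the natural representations $\mathbb C^{m'|n'}$, and one checks directly that $\mathcal H_{m',n'}(\mathbb C^{m'|n'})\simeq \mathbb C^{m'-1|n'-1}$, so $F_{m,n}(V_t)\simeq\mathbb C^{m|n}$, the natural representation of $\mathfrak{gl}(m|n)$, which is an object of dimension $m-n=t$ in $\rp(\mathfrak{gl}(m|n))$. Therefore the composition $F_{m,n}\circ I$ is a SM functor from $\cD_t$ to $\rp(\mathfrak{gl}(m|n))$ carrying the $t$-dimensional generator $X_t$ to $\mathbb C^{m|n}$. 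By the universal property of $\cD_t$ (which is the Karoubi envelope of the free rigid SM $\mathbb C$-linear category on one object of dimension $t$), any such functor is determined up to unique $\otimes$-isomorphism by the image of the generator; hence $F_{m,n}\circ I$ coincides with the canonical functor $\cD_t\to\rp(\mathfrak{gl}(m|n))$.

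I do not expect a substantial obstacle: the content is formal once Theorem~\ref{thrm:functorequiv} provides the stabilization. The only mildly delicate point is verifying that the SM coherence data for the composed functors $\mathcal H_{m,n}^s$ are compatible with each other under the equivalences of Theorem~\ref{thrm:functorequiv}, and that the tensor bifunctor on $\cV_t$ is correctly reflected by $F_{m,n}$; both reduce to the associativity and naturality of the iterated K\"unneth isomorphism, which is standard.
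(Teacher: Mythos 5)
Your proof is correct and takes the same approach as the paper, which simply states the result is a ``straightforward consequence'' of Theorem~\ref{thrm:functorequiv}; you have spelled out the details (stabilization via the equivalences of that theorem, the composed K\"unneth SM structure, $\mathcal H(V)\simeq V'$ forcing $F_{m,n}(V_t)\simeq\mathbb C^{m|n}$, and the universal property of $\cD_t$) that the paper leaves implicit.
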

\begin{proof} Straightforward consequence of Theorem \ref{thrm:functorequiv}.
\end{proof}

Recall now the category $\T_\tg$ and the functor $\mathcal{R}_{\mathfrak{gl}(m|n)}:\T_\tg\to \rp(\mathfrak{gl}(m|n))$. 
Lemma \ref{lem:equiv1} implies that for a fixed $k$ and sufficiently large $m,n$ we have the canonical isomorphism
of functors  $$\mathcal{R}_{\mathfrak{gl}(m-1|n-1)}:\T^k_\tg\to \rp^k(\mathfrak{gl}(m-1|n-1))$$ and
$$\mathcal H\circ\mathcal{R}_{\mathfrak{gl}(m|n)}:\T^k_\tg\to \rp^k(\mathfrak{gl}(m-1|n-1)).$$ 
Hence we can define a functor $\Phi^k:\T^k_\tg\to\cV^k_t$  as the inverse limit 
$\displaystyle\lim_{\leftarrow}\mathcal R_{\mathfrak{gl}(m|n)}$, and therefore
$\Phi:\T_\tg\to\cV_t$ by passing to direct limit.

\begin{lemma}\label{lem:functortg} The functor $\Phi$ is a SM functor. Furthermore, $\Phi$ is exact, $\Phi(\tV(\lambda))=V_t(\lambda)$
and $ F_{m,n} \circ \Phi=\mathcal R_{\mathfrak{gl}(m|n)}$. 

\end{lemma}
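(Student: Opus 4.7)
The proof decomposes naturally into the four claims (SM structure, exactness, value on simple objects, compatibility with $F_{m,n}$), and essentially each one is obtained by assembling properties already established for the functors $\mathcal R_{\g}$ and $\mathcal H$ and then passing through the two limits used to build $\Phi$.

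For the SM property, my plan is to work one level at a time. Each $\mathcal R_{\mathfrak{gl}(m|n)}$ is SM by Lemma \ref{lem:invariants}(3); moreover, by Lemma \ref{lem:equiv1}, when restricted to $\T^k_\tg$ with $2k<\min(m,n)$, the natural transformations $\Psi\colon \mathcal R_{\g'}\to\mathcal H\circ\mathcal R_{\g}$ are $\otimes$-isomorphisms. So the family $\{\mathcal R_{\mathfrak{gl}(m|n)}|_{\T^k_\tg}\}$ is a compatible system of SM functors into the inverse system defining $\cV^k_t$, and its inverse limit $\Phi^k$ carries a canonical SM structure. Since the inclusions $\T^k_\tg\hookrightarrow\T^l_\tg$ and $\cV^k_t\hookrightarrow\cV^l_t$ for $k\leq l$ are SM embeddings, the direct limit $\Phi$ inherits the SM structure.

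For exactness, the key observation is that any short exact sequence in $\T_\tg$ already lives in some $\T^k_\tg$, so it suffices to prove that $\Phi^k$ is exact. But by Theorem \ref{thrm:functorequiv}, for $m,n$ sufficiently large, the functor $\cV^k_t\to\rp^k(\mathfrak{gl}(m|n))$ appearing in the inverse limit is an equivalence; in particular, $\Phi^k$ is identified with $\mathcal R_{\mathfrak{gl}(m|n)}|_{\T^k_\tg}$ for such large $(m,n)$, which is exact by Theorem \ref{thrm:invariants}(1). Hence $\Phi$ is exact.

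For the identification $\Phi(\tV(\lambda))=V_t(\lambda)$, the point is to unwind the definitions: $V_t(\lambda)$ is by construction the compatible family of standard objects $V(\lambda)\in\rp(\mathfrak{gl}(m|n))$ in the inverse limit defining $\cV^k_t$ (for $|\lambda|\leq k$), and Theorem \ref{thrm:invariants}(2) says precisely that $\mathcal R_{\mathfrak{gl}(m|n)}(\tV(\lambda))=V(\lambda)$ for all sufficiently large $m,n$. Taking the inverse limit gives the claim.

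Finally, $F_{m,n}\circ\Phi=\mathcal R_{\mathfrak{gl}(m|n)}$ is a direct check: for $M\in\T^k_\tg$, by definition $F_{m,n}(\Phi(M))$ equals $\mathcal H_{m,n}^{s}$ applied to the $(m+s,n+s)$-component of $\Phi(M)$, that is, to $\mathcal R_{\mathfrak{gl}(m+s|n+s)}(M)$, for $s$ large enough. Iterated application of Lemma \ref{lem:equiv1} gives $\mathcal H\circ\mathcal R_{\mathfrak{gl}(m+1|n+1)}\cong \mathcal R_{\mathfrak{gl}(m|n)}$ on $\T^k_\tg$ (once $m,n$ are large relative to $k$), so the composition stabilizes to $\mathcal R_{\mathfrak{gl}(m|n)}(M)$, as desired. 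I do not expect any serious obstacle here; the only minor subtlety is checking that the SM isomorphisms obtained from the $\Psi$'s are coherent enough that the inverse and direct limits produce a well-defined SM functor on all of $\T_\tg$, but this is routine given Lemma \ref{lem:equiv1}.
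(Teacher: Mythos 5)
Your proposal is correct and follows the same route the paper takes: SM structure from the SM-ness of the $\mathcal R_{\mathfrak{gl}(m|n)}$ and the $\otimes$-isomorphisms $\Psi$ of Lemma \ref{lem:equiv1}, exactness from Theorem \ref{thrm:invariants}(1) together with the identification of $\Phi^k$ with $\mathcal R_{\mathfrak{gl}(m|n)}|_{\T^k_\tg}$ for large $m,n$, the value on $\tV(\lambda)$ from Theorem \ref{thrm:invariants}(2), and the compatibility $F_{m,n}\circ\Phi=\mathcal R_{\mathfrak{gl}(m|n)}$ from the definition of $\Phi$ as an inverse limit. You have simply spelled out in more detail what the paper leaves terse.
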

\begin{proof} First, $\Phi$ is a SM functor since $\mathcal R_{\mathfrak{gl}(m|n)}$ is SM. The exactness of $\Phi$ follows from exactness
of restriction $\mathcal R_{\mathfrak{gl}(m|n)}:\T^k_\tg\to 
\rp^k(\mathfrak{gl}(m|n))$ for sufficiently large $m$ and $n$, see Theorem 
\ref{thrm:invariants}\eqref{itm:thrm_inv1}. 
The identity $\Phi(\tV(\lambda))=V_t(\lambda)$
is a direct consequence of  Theorem \ref{thrm:invariants}. Finally, the last assertion follows from the definition of $\Phi$ as the inverse limit.  
\end{proof}
\begin{corollary}
 For any injective object $E \in \T_\tg$, we have: $\Phi(E) \in I(\cD_t)$.
\end{corollary}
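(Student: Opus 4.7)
The plan is to reduce to the case of an indecomposable injective and then display it as the image of an idempotent that already lives in $\cD_t$. Recall that the indecomposable injectives of $\T_\tg$ are precisely the modules $\tY(\lambda)=\tV(\lambda^\circ)\otimes\tV_*(\lambda^\bullet)$ indexed by bipartitions $\lambda=(\lambda^\circ,\lambda^\bullet)$, and any injective is a direct sum of such. Since $\cD_t$ is additive (Karoubian) and $I$ is additive, $I(\cD_t)$ is closed under finite direct sums in $\cV_t$, so it suffices to show $\Phi(\tY(\lambda))\in I(\cD_t)$ for each single bipartition $\lambda$.

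Set $p=|\lambda^\circ|$, $q=|\lambda^\bullet|$, and let $e=\pi_{\lambda^\circ}\otimes\pi_{\lambda^\bullet}\in\mathbb{C}[S_p\times S_q]$ be the tensor product of the corresponding Young idempotents. By definition, $\tY(\lambda)$ is the image of $e$ acting on $\tT^{p,q}=\tT^{p,0}\otimes\tT^{0,q}$ through the canonical permutation action on tensor factors. By Lemma~\ref{lem:functortg}, $\Phi$ is exact and SM with $\Phi(\tV)=V_t$, $\Phi(\tV_*)=V_t^*$, and the SM compatibility intertwines the $S_p\times S_q$-actions on $\tT^{p,q}$ and on $V_t^{\otimes p}\otimes(V_t^*)^{\otimes q}$. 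Hence
$$
\Phi(\tY(\lambda))=\operatorname{Im}\!\bigl(e\colon V_t^{\otimes p}\otimes(V_t^*)^{\otimes q}\to V_t^{\otimes p}\otimes(V_t^*)^{\otimes q}\bigr).
$$

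The same idempotent $e$ also acts on $X_t^{p,q}=X_t^{\otimes p}\otimes(X_t^*)^{\otimes q}\in\cD_t$ via the canonical $S_p\times S_q$-action there. Because $I$ is SM with $I(X_t)=V_t$, and by the full faithfulness established in Proposition~\ref{prop:deligneconnection}, the two copies of $e$ in $\End_{\cD_t}(X_t^{p,q})$ and in $\End_{\cV_t}(V_t^{\otimes p}\otimes(V_t^*)^{\otimes q})$ correspond under $I$. Since $\cD_t$ is Karoubian, the object $Y(\lambda):=\operatorname{Im}(e)$ belongs to $\cD_t$; since any additive functor preserves splittings of idempotents, $I(Y(\lambda))$ equals the image of $e$ in $\cV_t$, which is $\Phi(\tY(\lambda))$. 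Therefore $\Phi(\tY(\lambda))=I(Y(\lambda))\in I(\cD_t)$, as required.

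The substantive point is the SM naturality: under the canonical identifications $\Phi(\tT^{p,q})\simeq V_t^{\otimes p}\otimes(V_t^*)^{\otimes q}\simeq I(X_t^{p,q})$, the Young idempotent $e$ transports unchanged between all three endomorphism algebras. This is a routine check using only the compatibility of $\Phi$ and $I$ with associators and symmetry braidings, but it is where all the naturality is really being used and the one place that needs explicit verification beyond what is cited.
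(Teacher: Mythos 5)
Your proof is correct and is essentially the paper's argument spelled out at the level of individual idempotents: the paper observes in one stroke that the injectives in $\T_\tg$ form the full Karoubian SM subcategory generated by $\tV,\tV_*$, that $I(\cD_t)$ is the full Karoubian SM subcategory of $\cV_t$ generated by $V_t,V_t^*$, and that the SM functor $\Phi$ carries the one to the other since $\Phi(\tV)=V_t$, $\Phi(\tV_*)=V_t^*$. You instead decompose an arbitrary injective into indecomposables $\tY(\lambda)$ and realize each as the image of a Young idempotent $e$ on a mixed tensor power, then transport $e$ along $\Phi$ and $I$. Both arguments rely on exactly the same facts (additivity, SM structure, Karoubianness), so this is the same route, written more concretely.

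One small precision worth noting for your own records: the passage from an arbitrary injective to a finite direct sum of $\tY(\lambda)$'s uses that objects of $\T_\tg$ have finite length (Krull--Schmidt), and that a direct summand of an injective is injective; this is implicit in both your argument and the paper's. Also, full faithfulness of $I$ (which you cite) is not actually needed — the transport of the idempotent $e$ only requires $I$ to be a SM functor so that it respects the symmetric group action on tensor powers; that $I$ is fully faithful is irrelevant to this particular point, though invoking it does no harm.
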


\begin{proof}
 The full subcategory of injective objects in $\T_\tg$ is the full Karoubian symmetric monoidal subcategory of $\T_\tg$ generated by the objects $\tV, \tV_*$. Similarly, $I(\cD_t)$ is the full Karoubian symmetric monoidal subcategory of $\cV_t$ generated by the objects $$V_t = \Phi(\tV), \; V^*_t = \Phi(\tV_*)$$ This immediately implies the desired statement.
\end{proof}

\subsection{Properties \texorpdfstring{of the category $\cV_t$}{}}\label{ssec:properties_cat_V_t}
We now list several "local" and "global" properties of the categories $\cV_t$. 

The local properties are properties of the subcategories $\cV_t^k$, and follow quite easily from the fact that $\cV_t^k$ is equivalent to $\rp^k(\mathfrak{gl}(m|n))$ for sufficiently large $m$ and $n$ such that $m-n =t$. 

The subcategories $\cV^k_t$ satisfy the following properties:
\begin{enumerate}
\item\label{itm:prop_Vt_1} Simple objects in $\cV^k_t$ are enumerated by 
bipartitions $\lambda$ with $|\lambda|\leq k$. Every simple object
is isomorphic to $L_t(\lambda)$ which we define as the inverse limit of simple $\mathfrak{gl}(m|n)$-modules $L(\lambda)$;
\item\label{itm:prop_Vt_2} Any object in $\cV^k_t$ has finite length;
\item\label{itm:prop_Vt_3} The category $\cV^k_t$ has enough projectives and 
injectives.
\item\label{itm:prop_Vt_4} The contragredient duality functor $\ 
\check{}:\rp^k(\mathfrak{gl}(m|n))\to \rp^k(\mathfrak{gl}(m|n))$ extends to the 
corresponding functor $\check{}:\cV^k_t\to\cV^k_t$;
\item\label{itm:prop_Vt_5} For any bipartition $\lambda$ with $|\lambda|\leq k$ 
we define $V_t(\lambda)$ as the inverse limit of $V(\lambda)$. Then 
the cosocle of $V_t(\lambda)$ and the socle of $\check V_t(\lambda)$ are isomorphic to
$L_t(\lambda)$; 
\item\label{itm:prop_Vt_6} The tensor structure on $\cV_t$ is given by maps 
$\cV_t^k \otimes \cV_t^l \rightarrow \cV^{k+l}$ and $\cV_t^k$ is closed under 
the tensor duality contravariant functor $(\cdot)^*$

\end{enumerate}


The only non-trivial statement in the above list is the existence of enough projective and injective objects in $\cV^k_t$ (equivalently, in $\rp^k(\gl(m|n))$). The existence of projective objects (and by duality, injective objects) can be seen as follows:

Consider the inclusion functor $$j^k: \rp^k(\gl(m|n)) \hookrightarrow \rp(\gl(m|n))$$
This functor has adjoints on both sides, its left adjoint $j^k_!$ being the functor which takes a $\gl(m|n)$-module to its maximal quotient which lies in $\rp^k(\gl(m|n))$.

This formally implies that this functor takes projective modules in $ \rp(\gl(m|n))$ to projective objects in $\rp^k(\gl(m|n))$; since $ \rp(\gl(m|n))$ has enough projectives, so does $\rp^k(\gl(m|n))$.


The category $\cV_t$ satisfies a similar list of "global" properties:

The simple objects of $\cV_t$ are $L_t(\lambda)$ for all bipartitions $\lambda$ (the restriction on the size of bipartition disappears) and properties
\eqref{itm:prop_Vt_2}, \eqref{itm:prop_Vt_4}, \eqref{itm:prop_Vt_5} hold. One 
should stress that $\cV_t$ does not have projective nor injective objects.
%
%
%
%

We conclude with a lemma which will be useful later; this lemma is a "local" analogue of the fact that given a tensor category, 
a projective object $P$ and any object $X$, the object $P \otimes X$ is once again projective.

Denote by $$\mathit{i}^k: \cV_t^k \hookrightarrow \cV_t$$ the inclusion functor. This functor has left and right adjoints; its left adjoint $\mathit{i}^k_!$ is the functor which takes an object of $\cV_t$ to its maximal quotient lying in $\cV_t^k$.
\begin{lemma}\label{lem:loc_proj_almost_ideal}
 Let $k \geq l \geq 0$. Let $P$ be a projective object in $\mathcal{V}_t^k$, and let $X$ be any object in $\mathcal{V}_t^l$. Then $\mathit{i}^{k-l}_!(P \otimes X)$ is a projective object in $\mathcal{V}_t^{k-l}$.
\end{lemma}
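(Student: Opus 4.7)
My plan is to combine the adjunction defining $\mathit{i}^{k-l}_!$ with the rigidity of $\cV_t$ to reduce the claim to the hypothesis that $P$ is projective in $\cV_t^k$. To verify projectivity of $\mathit{i}^{k-l}_!(P \otimes X)$ in $\cV_t^{k-l}$, it suffices to show that for every epimorphism $A \twoheadrightarrow B$ in $\cV_t^{k-l}$, the induced map
$$\Hom_{\cV_t^{k-l}}\bigl(\mathit{i}^{k-l}_!(P \otimes X),\, A\bigr)\longrightarrow \Hom_{\cV_t^{k-l}}\bigl(\mathit{i}^{k-l}_!(P \otimes X),\, B\bigr)$$
is surjective. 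By the adjunction $\mathit{i}^{k-l}_!\dashv \mathit{i}^{k-l}$, this is equivalent to surjectivity of
$$\Hom_{\cV_t}(P\otimes X, A)\longrightarrow \Hom_{\cV_t}(P\otimes X, B).$$

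The next step is to move $X$ to the other side. Since $\cV_t$ is rigid and $X$ has a dual $X^*\in\cV_t^l$ (property (6) of $\cV_t^l$), we have a natural isomorphism $\Hom_{\cV_t}(P\otimes X, Y)\cong\Hom_{\cV_t}(P, Y\otimes X^*)$ for any $Y$. Applying this with $Y=A$ and $Y=B$, the problem becomes surjectivity of
$$\Hom_{\cV_t}(P,\, A\otimes X^*)\longrightarrow \Hom_{\cV_t}(P,\, B\otimes X^*).$$
Because $A,B\in\cV_t^{k-l}$ and $X^*\in\cV_t^l$, the tensor structure places both $A\otimes X^*$ and $B\otimes X^*$ in $\cV_t^{k}$, and full faithfulness of $\mathit{i}^k$ identifies these Hom-spaces with Hom-spaces in $\cV_t^k$.

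It remains to observe that $A\otimes X^*\to B\otimes X^*$ is an epimorphism in $\cV_t^k$. This follows from biexactness of $\otimes$ in the ambient tensor category $\cV_t$ together with the fact that the inclusion $\mathit{i}^k$ is exact (so an epi in $\cV_t$ between objects of $\cV_t^k$ is an epi in $\cV_t^k$). Now the hypothesis that $P$ is projective in $\cV_t^k$ gives the required surjectivity, and unwinding the adjunction and rigidity isomorphisms finishes the proof. No serious obstacle is anticipated; the only point requiring care is keeping track of which subcategory each object lives in (to justify invoking projectivity of $P$ specifically in $\cV_t^k$, not in some larger $\cV_t^N$).
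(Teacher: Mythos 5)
Your proof is correct and follows essentially the same route as the paper: the paper's argument is exactly the chain of isomorphisms $\Hom_{\cV_t^{k-l}}(\mathit{i}^{k-l}_!(P\otimes X),-)\cong\Hom_{\cV_t}(P\otimes X,-)\cong\Hom_{\cV_t^k}(P,X^*\otimes -)$ followed by the observation that $P$ is projective in $\cV_t^k$. You have merely unpacked the same steps (adjunction, rigidity, landing in $\cV_t^k$, exactness of $X^*\otimes -$) in greater detail.
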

\begin{proof}
 Denote $Y := \mathit{i}^{k-l}_!(P \otimes X)$. Then we have isomorphisms of functors $\mathcal{V}_t^{k-l} \rightarrow \mathtt{Vect}$
 $$\Hom_{\mathcal{V}_t^{k-l}}(Y, (\cdot)) \cong \Hom_{\mathcal{V}_t}(P \otimes X, (\cdot))\cong \Hom_{\mathcal{V}_t^k}(P, X^* \otimes (\cdot))$$
 
 Using the fact that $P$ is projective in $\mathcal{V}_t^k$, we conclude that the functor $\Hom_{\mathcal{V}_t^k}(P, X^* \otimes (\cdot) )$ on $\mathcal{V}_t^{k-l}$ is exact.
\end{proof}

\subsection{Infinite weight diagrams, blocks and translation functors} 
 In this subsection we describe the block decomposition of the category $\cV_t$. One can immediately see that the blocks of $\cV_t$ correspond to 
the blocks of $\mathcal{D}_t$ classified in \cite{CW}.

For the next result we need the analogues of weight diagrams and translation functors for the category $\cV_t$.
Let $t\in\mathbb Z$ and $\lambda=(\lambda^\circ, \lambda^\bullet)$ be a bipartition. We associate to $\lambda$ two infinite sequences
$c_1,c_2,\dots,$ and $d_1,d_2,\dots$ defined by $c_i=\lambda^\circ_i+t-i$, $d_i=\lambda^\bullet_i-i$. Here we assume that $\lambda^\circ_i=\lambda^\bullet_i=0$ for
sufficiently large $i$. We define the weight diagram
$d_\lambda$ associated to $\lambda$ by the same rule as in 
\eqref{eq:weightdiagrams}. The only difference with $f_\lambda$ is that now 
our sequences are infinite
and hence $d_\lambda(s)=\times$ for all $s<<0$. 

For example, let $t=2$ and $\lambda$ is an empty  bipartition, then the corresponding weight diagram is
$$\dots\times\times >>\circ\circ\dots$$
with $>>$ at the position $0$ and $1$. 
If $\lambda^\circ=(1)$ and $\lambda^\bullet=(1)$ then $d_\lambda$ is of the form
$$\dots\times\times >\times\circ >\circ\circ\dots$$
with rightmost $>$ at $2$.
Weight diagrams associated to bipartition always have finitely many symbols $>$ and $<$ which we call
{\it core} symbols. It is easy to see that $t$ equals the difference between the number of $>$ and the number of $<$ in $d_\lambda$.
Furthemore, by construction all sufficiently large positive positions 
are empty. 

The core $\bar{d}_\lambda$ of the weight diagram $d_\lambda$ is obtained by removing all $\times$ and replacing them by $\circ$.
In the first example $\bar{d}_\lambda$ is
$$\dots\circ\circ >>\circ\circ\dots$$
and in the second
$$\dots\circ\circ >\circ\circ >\circ\circ\dots$$

The following is straightforward.
\begin{lemma}\label{lem:diagrams} Recall the equivalence between the categories $\cV^k_t$ and $\rp^k(\g)$, where $\g={\mathfrak{gl}(m|n)}$ for sufficiently large 
$m,n$ such that $m-n=t$. Assume that $\lambda$ is a bipartition such that $L(\lambda)\in \rp^k(\g)$ and denote $f_\lambda$ the corresponding weight diagram.
Then
\begin{enumerate}
\item $f_\lambda(s)=\circ$ for $s<-n$;
\item $d_\lambda(s)=\times$ for $s<-n$;
\item $d_\lambda(s)=f_\lambda(s)$ for $s\geq -n$.
\item $\bar d_\lambda=\bar f_\lambda$.
\end{enumerate}
\end{lemma}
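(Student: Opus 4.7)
The plan is to reduce the lemma to a direct comparison of the defining sequences $(c_i),(d_j)$ for the two diagrams. The one external input needed is the Remark following Theorem~\ref{thrm:invariants}, which (together with the standing assumption $2k<\min(m,n)$ built into the equivalence $\cV_t^k\simeq\rp^k(\g)$) forces any $k$-admissible $\lambda$ to be positive with $|\lambda^\circ|+|\lambda^\bullet|\leq k$; in particular, both $\lambda^\circ$ and $\lambda^\bullet$ have fewer than $\min(m,n)/2$ nonzero parts.

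For (1), I would check that for $s<-n$ neither $c_i=a_i+t-i=s$ with $i\leq m$ nor $d_j=b_j-j=s$ with $j\leq n$ can have a solution, since either would force $a_i$ (respectively $b_j$) to be strictly negative, contradicting positivity of $\lambda$; hence $f_\lambda(s)=\circ$. For (2), the same calculation run backwards shows that for $s<-n$ the indices $i:=t-s>m$ and $j:=-s>n$ lie strictly past the supports of $\lambda^\circ$ and $\lambda^\bullet$, so $\lambda^\circ_i=\lambda^\bullet_j=0$; the $i$-th $c$-position and the $j$-th $d$-position of the infinite sequence then both land on $s$, yielding $d_\lambda(s)=\times$.

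For (3), the key point is that every ``extra'' contribution to $d_\lambda$ (coming from $c_i$ with $i>m$ or $d_j$ with $j>n$) has value $t-i<-n$ or $-j<-n$, again by the length bound on $\lambda^\circ,\lambda^\bullet$. So on the region $s\geq -n$ the infinite and finite diagrams are read off from exactly the same data and therefore coincide. Part (4) is then formal: on $s<-n$ both cores equal $\circ$ (already in $f_\lambda$, and after erasing $\times$ in $d_\lambda$), and on $s\geq -n$ the diagrams agree before any erasure. The only thing to be careful about is keeping the inequalities tight enough at the threshold $s=-n$ so that the length bound $k<\min(m,n)/2$ engages properly; no deeper obstacle is expected.
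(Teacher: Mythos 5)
Your proof is correct and is precisely the straightforward verification the paper expects — the lemma is stated with no proof beyond a $\square$, so the intended argument is exactly this index bookkeeping. Your use of the Remark after Theorem~\ref{thrm:invariants} (positivity and $|\lambda|\le k$ for $k$-admissible weights, valid once $2k<\min(m,n)$) to bound the supports, the observation that $c_i<-n$ for $i>m$ and $d_j<-n$ for $j>n$ once the corresponding partition entries vanish, and the resulting split into the regions $s<-n$ and $s\ge -n$, is the whole content; part (4) then follows formally as you note.
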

\qed

Next we define the cap diagram associated to a given weight diagram $d_\lambda$ following the same rule as in finite dimensional case.
Note that in this case a cap diagram has infinitely many caps.

\begin{lemma}\label{lem:multmain} In the category $\cV_t$ we have
$[V(\lambda):L(\mu)]\leq 1$. Furthemore,
$[V(\lambda):L(\mu)]=1$ if and only if $d_\lambda$ is obtained from $d_\mu$ by moving finitely many crosses from the left end of its cap to the right end. 
\end{lemma}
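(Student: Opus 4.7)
The plan is to reduce the multiplicity question in $\cV_t$ to the corresponding question in $\rp^k(\gl(m|n))$ for sufficiently large $m,n$ via Theorem \ref{thrm:functorequiv}, then invoke the known multiplicity formula for Kac modules (Propositions \ref{prop:multiplicities} and \ref{prop:multinkac}), and finally translate the combinatorial condition from finite to infinite weight diagrams using Lemma \ref{lem:diagrams}.

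First, I fix $k \geq |\lambda| + |\mu|$, so that both $V_t(\lambda)$ and $L_t(\mu)$ lie in $\cV_t^k$, and choose $m, n$ with $m - n = t$ and $2k < \min(m, n)$. By Theorem \ref{thrm:functorequiv}, the functor $F_{m,n}$ restricts to an equivalence $\cV_t^k \simeq \rp^k(\gl(m|n))$ sending $V_t(\lambda) \mapsto V(\lambda)$ and $L_t(\mu) \mapsto L(\mu)$; consequently
$$
[V_t(\lambda) : L_t(\mu)]_{\cV_t} = [V(\lambda) : L(\mu)]_{\rp^k(\gl(m|n))}.
$$
Proposition \ref{prop:multiplicities} identifies this with $[K(\lambda) : L(\mu)]$, and Proposition \ref{prop:multinkac} asserts that this multiplicity is at most $1$, with equality if and only if $f_\lambda$ arises from $f_\mu$ by moving some crosses from the left ends of their caps to the right ends. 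This already yields the upper bound $[V_t(\lambda) : L_t(\mu)] \leq 1$.

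Next, I translate the condition on the finite diagrams $f_\lambda, f_\mu$ into the stated condition on the infinite diagrams $d_\lambda, d_\mu$. By Lemma \ref{lem:diagrams}, for $n$ sufficiently large $d_\lambda$ and $d_\mu$ coincide with $f_\lambda$ and $f_\mu$ on positions $s \geq -n$, while both equal $\times$ at all positions $s < -n$. Since the tails at $s < -n$ are identical in $d_\lambda$ and $d_\mu$, any finite sequence of cross-moves taking $d_\mu$ to $d_\lambda$ must leave the tail untouched, and therefore affects only crosses at positions $s \geq -n$. Moreover, for $n$ large enough, the cap in $d_\mu$ attached to any cross in the finite region lies entirely in the finite region and coincides with the corresponding cap in $f_\mu$, because the nesting rule depends only on the local arrangement of $\times$'s and $\circ$'s and the finite-region configurations of $d_\mu$ and $f_\mu$ agree. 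The bijection between finite sequences of admissible moves on $d_\mu$ and on $f_\mu$ follows, giving the desired equivalence of combinatorial conditions.

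The main obstacle is the last step: verifying rigorously that the caps anchored at finite-region crosses of $d_\mu$ stay within the finite region and coincide with their counterparts in $f_\mu$. This reduces to a combinatorial check showing that the infinite tail of $\times$'s in $d_\mu$ (identical to that of $d_\lambda$) pairs uniformly with $\circ$'s in a manner independent of the finite perturbation, so that the nested matching in the finite region is unaffected once $n$ exceeds both $|\lambda|$ and $|\mu|$ by a sufficient margin. Granting this, the equivalence between the finite and infinite move descriptions is immediate, and the lemma follows.
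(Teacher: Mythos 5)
Your proof is correct and takes the same route as the paper, whose own proof consists of the single line ``Follows from Proposition~\ref{prop:multiplicities}''. You have correctly identified and filled in what that one-liner leaves implicit: the identification of $\cV_t^k$ with $\rp^k(\gl(m|n))$ for large $m,n$, the comparison with Kac-module multiplicities via Propositions~\ref{prop:multiplicities} and~\ref{prop:multinkac}, and the combinatorial translation between the finite diagrams $f_\lambda, f_\mu$ and the infinite diagrams $d_\lambda, d_\mu$ afforded by Lemma~\ref{lem:diagrams}, where the key point (which you flag and justify adequately) is that the caps attached to crosses in the stable region coincide because the infinite tail of $\times$'s at $s<-n$ in $d_\mu$ only produces caps nesting strictly outside the finite-region caps.
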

\begin{proof} Follows from Lemma \ref{lem:multiplicities} and Proposition \ref{prop:multinkac}.
\end{proof}

\begin{lemma}\label{lem:infblocks} 
(a) The category $\cV_t$ has a block decomposition. Two simple objects $L_t(\lambda)$ and $L_t(\mu)$ are in the same block
if and only if $\bar{d}_\lambda=\bar{d}_\mu$. Thus,
$$\cV_t=\bigoplus_{\chi} \cV_t^\chi,$$
where $\chi$ runs the set of all possible core diagrams and the simple objects of $ \cV_t^\chi$ are isomorphic to $L_t(\lambda)$
with $\bar{d}_\lambda=\chi$.

(b) Recall the SM functor $F_{m,n}:\cV_t\to \rp(\mathfrak{gl}(m|n))$. Then $F_{m,n}(\cV_t^\chi)$ is a subcategory in $\rp(\g)^\chi$.

(c) Define the translation functors 
$\overline{T}_{\theta,\chi}, \overline{T}^*_{\theta,\chi}:\cV_t^\theta\to \cV_t^\chi$
by
$$\overline{T}_{\theta,\chi}(M)=(M\otimes V_t)^\theta,\quad \overline{T}^*_{\theta,\chi}(M)=(M\otimes V_t^*)^\theta.$$
Then
$$F_{m,n}\circ \overline{T}_{\theta,\chi}={T}_{\theta,\chi}\circ F_{m,n},\quad F_{m,n}\circ \overline{T}^*_{\theta,\chi}={T}^*_{\theta,\chi}\circ F_{m,n}.$$
As before, the functors $\overline{T}_{\theta,\chi}$ and $\overline{T}^*_{\chi,\theta}$ are biadjoint for every $\chi, \theta$.
\end{lemma}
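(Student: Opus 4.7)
The plan is to bootstrap the block decomposition of $\cV_t$ from the block decompositions at finite level, then translate cores from $f_\lambda$ to $d_\lambda$ via Lemma \ref{lem:diagrams}, and finally deduce the compatibility with $F_{m,n}$ and biadjointness of translation functors.

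\textbf{Part (a).} First, I would fix $k$ and work in $\cV_t^k$. By Theorem \ref{thrm:functorequiv}, for $m,n$ with $4k<\min(m,n)$ and $m-n=t$, there is an equivalence $\cV_t^k\simeq \rp^k(\mathfrak{gl}(m|n))$. The block decomposition (\ref{eq:finblock}) restricted to $\rp^k(\g)$ yields a block decomposition of $\cV_t^k$, indexed by core diagrams $\bar f_\lambda$ of $k$-admissible weights $\lambda$. Using Lemma \ref{lem:diagrams}(4), for such $\lambda$ one has $\bar d_\lambda=\bar f_\lambda$, so the index set is equivalently the set of cores $\bar d_\lambda$ of bipartitions with $|\lambda|\le k$. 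This gives $\cV_t^k=\bigoplus_\chi (\cV_t^k)^\chi$. For the passage to the direct limit, note that the embedding $\cV_t^k\hookrightarrow \cV_t^{k+1}$ respects the block decomposition (if $L_t(\lambda)$ and $L_t(\mu)$ are connected by a non-split extension, this extension persists at all higher levels; conversely, block-connectedness in $\cV_t^l$ requires a chain of Ext-connections, which exists already at any level containing both simples and their Ext-chain). Hence $\cV_t=\lim_\to \cV_t^k$ inherits a block decomposition indexed by arbitrary core diagrams $\chi$, proving (a).

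\textbf{Part (b).} By Lemma \ref{lem:simple}(2) (applied iteratively along the tower defining $F_{m,n}$), one has $F_{m,n}(L_t(\lambda))\simeq L(\lambda)$ whenever the right hand side is defined in $\rp(\g)$. If $L_t(\lambda)\in\cV_t^\chi$ with $\chi=\bar d_\lambda$, then by Lemma \ref{lem:diagrams}(4), the finite weight diagram $f_\lambda$ has the same non-$\times$ pattern as $\chi$ in the relevant range, so $\bar f_\lambda=\chi$ and $L(\lambda)\in\rp(\g)^\chi$. Since $F_{m,n}$ is exact on each $\cV_t^k$ (as a composition of equivalences with an inclusion), every object of $\cV_t^\chi$ is sent to an object all of whose simple subquotients lie in $\rp(\g)^\chi$, proving (b).

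\textbf{Part (c).} The commutation identities follow directly from (b) together with Lemma \ref{lem:functorgl}: $F_{m,n}$ is symmetric monoidal and sends $V_t\mapsto V$, $V_t^*\mapsto V^*$, and by (b) intertwines the block projections $(\cdot)^\chi$ on both sides. Hence
\[
F_{m,n}(\overline T_{\theta,\chi}(M))=F_{m,n}((M\otimes V_t)^\theta)=(F_{m,n}(M)\otimes V)^\theta=T_{\theta,\chi}(F_{m,n}(M)),
\]
and similarly for $\overline T^*_{\theta,\chi}$. For biadjointness of $\overline T_{\theta,\chi}$ and $\overline T^*_{\chi,\theta}$, the argument is formal from the tensor-category structure of $\cV_t$: tensoring with $V_t$ is left and right adjoint to tensoring with $V_t^*$ (because $V_t$ is dualizable with dual $V_t^*$), and projection to a block is self-adjoint (being an exact idempotent endofunctor given by a central idempotent of the identity functor). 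Composing these adjunctions yields biadjointness of $\overline T_{\theta,\chi}$ and $\overline T^*_{\chi,\theta}$.

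The main obstacle is part (a), specifically showing that the blocks of the direct limit $\cV_t$ are no coarser than expected: one must check that two simples $L_t(\lambda), L_t(\mu)$ with distinct cores $\bar d_\lambda\neq \bar d_\mu$ remain in different blocks at arbitrarily large level $k$, which follows because the core of a weight in $\cV_t^k$ is determined by $\lambda$ alone and is compatible under the embeddings $\cV_t^k\hookrightarrow\cV_t^{k+1}$. Parts (b) and (c) are then essentially formal consequences of (a) and the structure of $F_{m,n}$.
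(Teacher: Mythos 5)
Your proof follows essentially the same route as the paper's (which is very terse: choose $k$ large enough, invoke the equivalence $\cV_t^k\simeq\rp^k(\gl(m|n))$ for large $m,n$, and use Lemma \ref{lem:diagrams}(4) to match $\bar d_\lambda$ with $\bar f_\lambda$); you fill in more of the bookkeeping, especially the passage to the direct limit in (a) and the formal biadjointness argument in (c), both of which are correct.

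However, there is a slip in your argument for (b). You assert that $F_{m,n}$ is exact on each $\cV_t^k$ ``as a composition of equivalences with an inclusion.'' This is only true when $m,n$ are large relative to $k$ (so that Theorem \ref{thrm:functorequiv} applies and $\cV_t^k\simeq\rp^k(\gl(m|n))$). For small $m,n$ the functor $F_{m,n}\rvert_{\cV_t^k}$ is a composition of the equivalence $\cV_t^k\simeq\rp^k(\gl(m+s|n+s))$ (for $s$ large) with $\mathcal H_{m,n}^s$, and the last few factors $\mathcal H$ in that composition are not exact on the relevant subcategories---the paper itself emphasizes that the $F_{m,n}$ are not exact. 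The conclusion of (b) is still correct, but it should be justified differently: the Duflo--Serganova functor $\mathcal H$ preserves central characters (hence cores), being symmetric monoidal and compatible with translation functors; iterating gives $F_{m,n}(\cV_t^\chi)\subset\rp(\g)^\chi$ without any exactness assumption. With that correction, the rest of your argument (including (c), which only needs $F_{m,n}$ to be SM and block-preserving, not exact) goes through.
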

\begin{proof} Choose $k$ such that both $L(\lambda)$ and $L(\mu)$ belong to $\cV^k_t$ and $m,n$ such that  $\cV^k_t$ is equivalent to
$\rp^k(\mathfrak{gl}(m|n))$. Then, the core diagrams are the same for  $\cV^k_t$ and $\rp^k(\mathfrak{gl}(m|n))$ and hence all assertions are straightforward.
\end{proof}

\begin{remark}\label{rem:inftranslation} It follows from above lemma that Lemma \ref{lem:GS} (a),(b),(e) holds for 
$\overline{T}_{\theta,\chi}$ and $\overline{T}^*_{\theta,\chi}$ if one uses
$d_\lambda$ instead of $f_\lambda$.
\end{remark}

\subsection{Existence of presentation}
The goal of this section is to prove the following statement, which is crucial in the proof of universality of $\cV_t$:

\begin{proposition}\label{prop:presentation} 
For any object $M \in \cV_t$ there exists a presentation $$ T' \twoheadrightarrow M \hookrightarrow T''$$ where $T', T''$ are in the image of $\mathcal{D}_t$.
\end{proposition}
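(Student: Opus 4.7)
The plan is to first produce a surjection $T' \twoheadrightarrow M$ with $T'$ in the image of $\cD_t$, and then obtain the embedding $M \hookrightarrow T''$ by applying the same argument to the tensor dual $M^*$ and dualizing the result.

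Since $\cV_t = \varinjlim_k \cV_t^k$, the object $M$ lies in $\cV_t^k$ for some $k$. Fix $m, n$ with $m-n = t$ and $\min(m,n) > 4k$; by Theorem~\ref{thrm:functorequiv}, the functor $F_{m,n}$ is an equivalence $\cV_t^k \xrightarrow{\sim} \rp^k(\gl(m|n))$ that sends $V_t^{p,q}$ to $T^{p,q}$. Writing $M' = F_{m,n}(M)$, the problem reduces to producing a surjection $\bigoplus_i T^{p_i, q_i} \twoheadrightarrow M'$ in $\rp^k(\gl(m|n))$, which then transports back to the required surjection in $\cV_t^k$.

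In $\rp^k(\gl(m|n))$ there are enough projectives by property (3) of $\cV_t^k$, and by Theorem~\ref{thrm:invariants}(3), in the stable range every simple object is of the form $L(\lambda)$ for a bipartition with $|\lambda| \leq k$. The key claim is that, for $m, n$ sufficiently large, every indecomposable projective of $\rp^k(\gl(m|n))$ is a direct summand of some mixed tensor power $T^{p,q}$. Granting this, a projective cover of $M'$ yields the desired surjection $\bigoplus_i T^{p_i, q_i} \twoheadrightarrow M'$, which pulls back along $F_{m,n}^{-1}$ to $\bigoplus_i V_t^{p_i, q_i} \twoheadrightarrow M$ whose source lies in $I(\cD_t)$. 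For the embedding, $\cV_t^k$ is closed under tensor duality by property (6), so $M^* \in \cV_t^k$; applying the surjection step to $M^*$ gives $T' \twoheadrightarrow M^*$ with $T' \in I(\cD_t)$. Dualizing and using $M \cong M^{**}$ produces $M \hookrightarrow (T')^*$, and since $\cD_t$ is rigid, $(T')^* \in I(\cD_t)$.

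The main obstacle is the claim that in the stable range every indecomposable projective of $\rp^k(\gl(m|n))$ is a summand of a mixed tensor power of $V$ and $V^*$. This is a super-analogue of the classical fact for $\gl(n)$, and one expects to derive it from the computations of endomorphism algebras $\End(T^{p,q})$ due to Brundan and Stroppel (already invoked in the proof of Proposition~\ref{prop:deligneconnection}), which in the stable range identify $\rp^k(\gl(m|n))$ with the category of modules over $\End(\bigoplus_{p+q \leq k} T^{p,q})^{\op}$. For this module category the generator $\bigoplus_{p+q \leq k} T^{p,q}$ is tautologically a projective generator, so every indecomposable projective is a direct summand of it. Once this identification is in place, both the surjection and (by duality) the embedding follow directly from the existence of projective covers in finitely generated module categories.
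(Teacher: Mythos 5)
Your reduction via contragredient duality to producing only the epimorphism is fine, as is passing to $\rp^k(\gl(m|n))$ via the equivalence $F_{m,n}$ of Theorem~\ref{thrm:functorequiv}. The gap is the central claim that every indecomposable projective of $\rp^k(\gl(m|n))$ is a direct summand of a mixed tensor power, equivalently that $G := \bigoplus_{p+q\le k} T^{p,q}$ is a \emph{projective} generator. It is a generator by the very definition of $\rp^k(\g)$, but it is not projective, and the supporting Morita argument is circular: to identify $\rp^k(\gl(m|n))$ with $\End(G)^{\op}$-modules you already need $G$ to be a projective generator, which is exactly what is at issue.

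In fact $G$ cannot be projective unless the relevant blocks are semisimple. In the stable range $T^{p,q}$ is filtered by the standard objects $V(\lambda)$ (Corollary~\ref{cor:tp}) and is self-dual under the contragredient duality $\check{(\cdot)}$, so its indecomposable summands are standardly filtered and $\check{}$-self-dual; in the highest weight category $\rp^k(\gl(m|n))\simeq\cV_t^k$ (Corollary~\ref{cor:highestweight}) these are precisely the \emph{tilting} objects, as noted in the remark following that corollary. If the projective generator were tilting it would also be injective, and a highest weight category (which has finite global dimension) in which every projective is injective is semisimple; for integral $t$ there are non-semisimple atypical blocks, so $G$ is not projective there. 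In particular $P_k(\lambda)$, being standardly but in general not costandardly filtered, is never a summand of any $T^{p,q}$. What is true — and requires the work done in Lemmas~\ref{lem:trivialcosocle}, \ref{lem:trivialprojective}, \ref{lem:simplequotient} and \ref{lem:filtration} — is that $P_k(\lambda)$ is a \emph{quotient} of an object of $I(\cD_t)$; indeed Lemma~\ref{lem:filtration} shows it is a summand of $\mathit{i}^k_!(X_t^{p,q})$, the maximal $\cV_t^k$-quotient of a mixed tensor power, a much weaker statement than being a summand of $X_t^{p,q}$ itself. The Brundan--Stroppel identification of $\End(T^{p,q})$ with a walled Brauer algebra gives fullness of $\cD_t\to\rp(\gl(m|n))$ (and hence Proposition~\ref{prop:deligneconnection}), but no projective-generator statement, so your shortcut does not close.
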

\begin{proof} 

Due to the existence of the duality contravariant functor $\check{(\cdot)} : \cV_t \rightarrow \cV_t$, it is enough to show that for any object 
$M \in \cV_t$ there exists an epimorphism $T \twoheadrightarrow M $ where $T$ is in the image of $\mathcal{D}_t$.

We will prove the statement in several steps.

\begin{enumerate}
 \item[Step 1]: We prove the statement for $M= P_k(\emptyset)$, the projective cover of $\triv$ in $\cV_t^k$.
 \item[Step 2]: We prove the statement for any standard object $V_t(\lambda)$ in $\cV_t$.
 \item[Step 3]: We prove the statement for any projective object $P$ in $\cV_t^k$, for any $k \geq 0$.
 \item[Step 4]: We prove the statement for any object $M$ in $\cV_t$.
\end{enumerate}
Steps 1 and 2 are two independent special cases of the general statement, and will be proved in Lemmas \ref{lem:trivialprojective} and \ref{lem:simplequotient}.

{\bf Step 3}: It is enough to prove the statement for indecomposable objects $P_k(\lambda)$ (the projective cover of $L_t(\lambda)$ in $\cV_t^k$), where $|\lambda| \leq k$.

Let $$Y:= \mathit{i}^k_!( P_{2k}(\emptyset) \otimes L_t(\lambda) )$$ be the maximal quotient of $P_{2k}(\emptyset) \otimes L_t(\lambda)$ lying in $\cV_t^k$ (c.f. Subsection \ref{ssec:properties_cat_V_t}). 

By Lemma \ref{lem:loc_proj_almost_ideal}, $Y$ is a projective object in $\cV_t^k$. The covering epimorphism $ P_{2k}(\emptyset) \twoheadrightarrow \triv$ induces an epimorphism $$P_{2k}(\emptyset) \otimes L_t(\lambda) \twoheadrightarrow L_t(\lambda) $$ which factors through an epimorphism $Y \twoheadrightarrow L_t(\lambda) $.

By definition of projective cover, the latter induces a split epimorphism $$Y \twoheadrightarrow P_k(\lambda) $$

We now consider the composition $$ P_{2k}(\emptyset) \otimes V_t(\lambda) \twoheadrightarrow P_{2k}(\emptyset) \otimes L_t(\lambda)  \twoheadrightarrow Y \twoheadrightarrow P_k(\lambda)$$ where the first map is induced by the epimorphism $V_t(\lambda) \twoheadrightarrow L_t(\lambda)$ (cf. Subsection \ref{ssec:properties_cat_V_t}).
Applying Steps 1 and 2, we conclude that there exists an epimorphism $T \twoheadrightarrow P_k(\lambda) $ where $T$ is in the image of $\mathcal{D}_t$. 

{\bf Step 4}:
Let $k$ be such that $M$ belongs to $\cV^k_t$. The category $\cV^k_t$ has enough projectives, so there exists an epimorphism $ P \twoheadrightarrow M$ where $P$ is a projective object in $\cV^k_t$. Applying Step 3, we obtain an epimorphism $T \twoheadrightarrow P$, with $T$ in the image of $\mathcal{D}_t$; composed with the former, it gives an epimorphism $$ T \twoheadrightarrow M$$ as wanted.
\end{proof}

We begin with the proof of Step 1 of Proposition \ref{prop:presentation}.
\begin{lemma}\label{lem:trivialcosocle} Let $t=m-n$,  $\g={\mathfrak{gl}(m|n)}$ and $P(0)\in \rp(\g)$ be the projective cover of the trivial module.
Then there exists\footnote{ We emphasize that the object $Y$ constructed here depends on the integers $m, n$.}  $Y\in \cD_t$ such that $F_{m,n}\circ I(Y)=P(0)$ and the cosocle of $I(Y)$ in $\cV_t$ is isomorphic to the unit object $\triv \in \cV_t$. Moreover, $$F_{m, n}(I(Y) \twoheadrightarrow \triv): P(0) \rightarrow \triv \in \rp(\g)$$ is a epimorphism.
\end{lemma}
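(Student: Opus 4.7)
I plan to construct $Y \in \cD_t$ as a direct summand of a mixed tensor power $X_t^{p,q}$, cut out by an idempotent whose image under $F_{m,n}\circ I$ singles out $P(0)$ inside $T^{p,q}$. The argument splits naturally into realizing $P(0)$ as a summand of some $T^{p,q}$, lifting the projection to $\cD_t$, and then analyzing the cosocle in $\cV_t$ by passing to a large-rank equivalent of $\cV_t^k$.

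\emph{Step 1 (realizing $P(0)$ in $T^{p,q}$).} Since $\rp(\g)$ is generated as a symmetric monoidal category by $V$ and $V^*$, every indecomposable projective appears as a summand of some iterated tensor product of $V$'s and $V^*$'s. Concretely, I will produce such $p,q$ by iterating the translation functors of Lemma~\ref{lem:GS}(c),(d): starting from a typical simple projective $L(\mu)=P(\mu)$ (which is itself a summand of some $T^{p_0,q_0}$), the successive functors $T^*_{\chi,\theta}$ move the weight diagram toward $f_0$, creating the required $\times$-symbols, so that the final output is (a direct summand isomorphic to) $P(0)$.

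\emph{Step 2 (lifting to $\cD_t$).} The Schur--Weyl surjection identifies $\End_{\cD_t}(X_t^{p,q})$ with the walled Brauer algebra $B_{p,q}(t)$, and the natural map $B_{p,q}(t)\twoheadrightarrow \End_{\rp(\g)}(T^{p,q})$ is surjective. A standard idempotent-lifting argument then yields an idempotent $\tilde e \in B_{p,q}(t)$ whose image is conjugate to the projection $e_{P(0)}$ onto the $P(0)$-summand. Set $Y := \mathrm{Im}(\tilde e) \in \cD_t$; by construction one has $F_{m,n}(I(Y)) \cong P(0)$.

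\emph{Step 3 (cosocle in $\cV_t$).} To identify the cosocle of $I(Y)$ in $\cV_t$, I will view $I(Y)$ inside $\cV_t^k$ for $k = p+q$ and use Theorem~\ref{thrm:functorequiv}: the category $\cV_t^k$ is equivalent to $\rp^k(\gl(M|N))$ for every sufficiently large $M,N$ with $M-N=t$. Under this equivalence $I(Y)$ corresponds to the image of the same $\tilde e$ acting on the corresponding $T^{p,q}_{M,N}$, which (for $M,N$ large) is the projective cover $P_{M,N}(0)$ of the trivial module in $\rp^k(\gl(M|N))$, whose cosocle is $\triv$. Consequently, the cosocle of $I(Y)$ in $\cV_t^k$ is $L_t(\emptyset)=\triv$, and applying $F_{m,n}$ to the cosocle epimorphism $I(Y)\twoheadrightarrow\triv$ produces the (unique up to scalar) cosocle epimorphism $P(0)\twoheadrightarrow\triv$.

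\emph{Main obstacle.} The delicate point is the \emph{coherence} of $\tilde e$: the same element of $B_{p,q}(t)$ must simultaneously realize $P(0)$ in $\rp(\gl(m|n))$ and the projective cover of $\triv$ in $\rp(\gl(M|N))$ for all sufficiently large $M,N$ with $M-N=t$. This uniformity is essentially automatic if $\tilde e$ is built intrinsically in $\cD_t$ via translation functors (tensoring with $X_t$, $X_t^*$ and projecting onto the appropriate blocks using central idempotents of $\cD_t$), but verifying that the resulting summand is \emph{exactly} $P_{M,N}(0)$ across the whole tower of ranks requires careful bookkeeping with the block decompositions of $\cV_t$ and $\rp(\gl(M|N))$ and with the compatibility of translation functors with $F_{m,n}$ from Lemma~\ref{lem:infblocks}(c).
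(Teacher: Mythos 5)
Your Steps 1--2 are essentially consistent with the paper's strategy (the paper also starts from a typical weight $\lambda$ whose diagram has no $\times$'s, realizes $P(0)=T_r^\ast\circ\cdots\circ T_1^\ast(L(\lambda))$, and deduces that the lift $R=\overline{T}_r^\ast\circ\cdots\circ\overline{T}_1^\ast(L_t(\lambda))$ lies in $I(\cD_t)$ because it is a summand of a mixed tensor power of $V_t$). The paper avoids idempotent lifting by observing that $L_t(\lambda)$ is a summand of $V_t^{\otimes mn}$ (which is semisimple) and that translation functors are themselves summands of tensoring with $V_t,V_t^\ast$, so $R$ is automatically a summand of $V_t^{\otimes mn}\otimes(V_t^\ast)^{\otimes mn}$; but your variant of Step 2 is workable.

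However, Step 3 contains a genuine error, and it is exactly where the mathematical content of the lemma lives. You assert that, under the equivalence $\cV_t^k\simeq\rp^k(\gl(M|N))$ for $M,N\gg 0$, the object $I(Y)$ goes to the projective cover of the trivial module in $\rp^k(\gl(M|N))$. This is false. The object $F_{M,N}(I(Y))$ is a direct summand of a mixed tensor power $T^{p,q}_{M,N}$, hence a tilting module, and in particular it is self-dual under contragredient duality. The truncated projective cover $j^k_!(P_{M,N}(0))$ is not self-dual (it is not injective in $\rp^k$ for large $M,N$), so these objects cannot be isomorphic. Roughly speaking: the idempotent $\tilde e$ was chosen to pick out $P(0)$ at rank $(m,n)$; raising the rank to $(M,N)$ produces an indecomposable tilting module with \emph{simple cosocle} $\triv$, but it is not a projective cover. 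The correct statement is precisely the one in the lemma --- that the cosocle of $I(Y)$ in $\cV_t$ is $\triv$ --- and this is strictly weaker than what you claim, so your Step 3 does not imply it in the way you write.

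What is actually needed, and what the paper does, is a direct inductive proof that $\operatorname{cosoc}(R_i)$ is simple for each intermediate $R_i:=\overline{T}_i^\ast\circ\cdots\circ\overline{T}_1^\ast(L_t(\lambda))$, using biadjointness of $(\overline{T}_i,\overline{T}_i^\ast)$ together with Lemma~\ref{lem:GS}(e) (a translation functor applied to a simple is simple or zero, and is injective on isomorphism classes where nonzero); this gives simplicity of the cosocle, and comparison under $F_{m,n}$ identifies it with $\triv$. You also leave the final claim ---
that $F_{m,n}(I(Y)\twoheadrightarrow\triv)$ is still an epimorphism --- to an assertion of ``uniqueness up to scalar,'' but a priori $F_{m,n}$ (which is not exact) could send the cosocle map to zero; the paper handles this by expressing the cosocle map as the counit of the adjunction $(\overline{T}^\ast,\overline{T})$ and using the compatibility $F_{m,n}\circ\overline{T}^{(\ast)}=T^{(\ast)}\circ F_{m,n}$ from Lemma~\ref{lem:infblocks} to identify its image with the counit in $\rp(\g)$, which is a genuine epimorphism. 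Both of these points need to be supplied to make Step 3 correct.
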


\begin{proof} 
%
We will prove the statement for $t\geq 0$. The case of negative $t$ is similar.
Consider the weight diagram $f_0$ of the zero weight. It is
$$\dots\circ\times\dots\times>\dots>\circ\dots, $$
with $n$ symbols $\times$ and $t$ symbols $>$. Let $\lambda$ be a weight with weight diagram $f_\lambda$
$$\dots\circ<\dots<>\dots>\circ\dots,$$
with $n$ symbols $<$ and $m$ symbols $>$. In the language of bipartitions we have $\lambda^\circ= (n, n, ..., n)$ ($m$ times) and $\lambda^{\bullet}= \emptyset$.

One can easily check that $f_0$ can be obtained from $f_{\lambda}$ by a sequence of the following elementary moves
\begin{itemize}
\item moving $>$ to the adjacent left empty position;
\item changing $<>$ to $\times\circ$;
\item changing $<\times$ to $\times<$.
\end{itemize}

By Lemma \ref{lem:GS} this implies that there is a sequence of translation functors $T_1,\dots, T_r$ ($r=mn$) such that
$P(0)=T^*_r\circ\dots\circ T^*_1(P(\lambda))$. Here $T_i = T_{\theta, \chi}$ for some cores 
$\theta, \chi$ such that $ u(\chi) \geq u(\theta)$, and $T^*_i = T^*_{\chi, \theta}$ is its adjoint (on either side).


Note that $\lambda$ is a typical weight in $\rp(\g)$, hence $P(\lambda)=L(\lambda)$.

Now we set
$R:=\overline{T}^*_r\circ\dots\circ\overline{T}^*_1(L_t(\lambda))$.

By definition, $F_{m,n}(L_t(\lambda))=L(\lambda)$, so Lemma \ref{lem:infblocks} implies:
$P(0)=F_{m,n}(R)$. 

On the other hand, $L_t(\lambda)$ is a subquotient in $V_t^{ \otimes |\lambda|} = V_t^{ \otimes mn}$. Recall from \cite{D}, \cite{CW} that the object 
$X_t^{ \otimes mn}$ of $\cD_t$ is a direct sum of indecomposables $S^{\mu} X_t$ which satisfy $$\dim \Hom_{\cD_t}(S^{\mu} X_t, S^{\tau} X_t) = \delta_{\mu, \tau}$$ 
Therefore the object $V_t^{ \otimes mn}$ in $\cV_t$ is semisimple, and $L_t(\lambda)$ is a direct summand of $V_t^{ \otimes mn}$.

From the definition of $R$ it follows that $R$ is a direct summand in the mixed tensor power $V_t^{\otimes mn}\otimes (V_t^*)^{\otimes mn}$. 
Hence $R=I(Y)$ for some $Y\in \cD_t$.

\mbox{}

Next we prove that the cosocle of $R$ is simple. 

Denote $R_i:=\overline{T}^*_{i}\circ\dots\circ\overline{T}^*_1(L(\lambda))$. We prove by induction on $i$ that $\operatorname{cosoc}(R_i)$ is simple. 

Assume that cosocle of $R_i$ is simple. By the biadjointness of the pair of functors $(\overline{T}_i, \overline{T}_i^*)$, we have
$$\operatorname{Hom}_{\cV_t}(\overline{T}^*_{i+1}(R_i),L)=\operatorname{Hom}_{\cV_t}(R_i,\overline{T}_{i+1}(L)).$$
for any simple $L$ in the corresponding block of $\cV_t$. 

 By Remark \ref{rem:inftranslation} we can apply Lemma \ref{lem:GS}(e) for translation functors $\overline{T}_{i+1}$ in
$\cV_t$. Hence $\overline{T}_{i+1}(L)$ is either simple or zero, and 
$L \subset \operatorname{cosoc}(R_{i+1})$ iff $\overline{T}_{i+1}(L) \subset \operatorname{cosoc}(R_i)$.

By the induction hypothesis, $\operatorname{cosoc}(R_i)$ is simple so it remains to check that there exists at most one isomorphism class of simple 
objects $L$ such that $\overline{T}_{i+1}(L)=\operatorname{cosoc}(R_i)$. This follows from Lemma \ref{lem:GS}(e).

Hence $\operatorname{cosoc}(R_{i+1})$ is simple, and $\overline{T}_{i+1}(\operatorname{cosoc}(R_{i+1}))=\operatorname{cosoc}(R_{i})$. Thus
$$L_t(\lambda)=\overline{T}_1\circ\dots\circ\overline{T}_r(\operatorname{cosoc}(R))$$ 

We now prove that $\operatorname{cosoc}(R) = \triv \in \cV_t$. Applying Lemma \ref{lem:GS}(e) again, we see that it is equivalent to proving that $$L_t(\lambda)=\overline{T}_1\circ\dots\circ\overline{T}_r(\triv).$$ Both sides are simple objects in $\cV_t$ (see Lemma \ref{lem:GS}(e)), and the isomorphism holds since their images under the functor $F_{m, n}$ are not zero and coincide.

\mbox{}

It remains to prove that $F_{m, n}(R \rightarrow \triv): P(0) \rightarrow \triv \in \rp(\g)$ is a epimorphism. Denote $$\overline{T} :=\overline{T}_1\circ\dots\circ\overline{T}_r.$$ Notice that in $\cV_t$, the epimorphism $R \twoheadrightarrow \triv$ can be defined as $$\varepsilon^{\overline{T}^*, \overline{T}} \rvert_{\triv}: R=\overline{T}^* \circ \overline{T} (\triv) \longrightarrow \triv$$ where $\varepsilon^{\overline{T}^*, \overline{T}}$ is the counit for the adjunction $(\overline{T}^*, \overline{T})$. By Lemma \ref{lem:infblocks}, $F_{m, n}(\varepsilon^{\overline{T}^*, \overline{T}}) = \varepsilon^{T^*, T}$, where $$T :=T_1\circ\dots\circ T_r$$ in $\rp(\g)$ and $\varepsilon^{T^*, T}$ is the counit for the adjunction $(T^*, T)$. In particular, $$F_{m, n}(\varepsilon^{\overline{T}^*, \overline{T}} \rvert_{\triv}:R \rightarrow \triv) = \varepsilon^{T^*, T} \rvert_{\triv} $$ which is an epimorphism.
\end{proof}

\begin{lemma}\label{lem:trivialprojective} Let $P_k(\emptyset)$ denote the projective cover of $\triv$ in $\cV^k_t$. Then $P_k(\emptyset)$ is a quotient of 
some object in $I(\cD_t)$.
\end{lemma}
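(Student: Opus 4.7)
The plan is to combine Lemma \ref{lem:trivialcosocle} with the truncation functor $\mathit{i}^k_!\colon \cV_t \to \cV_t^k$. First I would fix integers $m, n$ with $m - n = t$ and $\min(m, n) > 4k$, so that Theorem \ref{thrm:functorequiv} provides the equivalence $F_{m,n}|_{\cV_t^k}\colon \cV_t^k \simeq \rp^k(\mathfrak{gl}(m|n))$ and so that Lemma \ref{lem:trivialcosocle} is applicable. That lemma then yields some $Y \in \cD_t$ together with the cosocle epimorphism $\pi\colon I(Y) \twoheadrightarrow \triv$ in $\cV_t$, along with the identification $F_{m,n}(I(Y)) \cong P(0)$, the projective cover of $\triv$ in $\rp(\mathfrak{gl}(m|n))$.

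Next I would form $R := \mathit{i}^k_!(I(Y))$, the maximal quotient of $I(Y)$ lying in $\cV_t^k$. Because $\triv \in \cV_t^k$, the epimorphism $\pi$ factors through the canonical surjection $I(Y) \twoheadrightarrow R$ and gives an epimorphism $R \twoheadrightarrow \triv$ in $\cV_t^k$. The key step is then to transport $R$, via the equivalence $F_{m,n}|_{\cV_t^k}$, to the analogous truncation $j^k_!(P(0))$ of $P(0)$ inside $\rp^k(\mathfrak{gl}(m|n))$. Since $j^k$ is an exact full embedding, its left adjoint $j^k_!$ preserves projectives (as recalled in Subsection \ref{ssec:properties_cat_V_t}), so $j^k_!(P(0))$ is projective in $\rp^k(\mathfrak{gl}(m|n))$, whence $R$ is projective in $\cV_t^k$.

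To conclude, we have a projective object $R \in \cV_t^k$ admitting an epimorphism onto $\triv$, which forces $P_k(\emptyset)$ to appear as a direct summand of $R$. Composing the resulting split projection $R \twoheadrightarrow P_k(\emptyset)$ with the canonical quotient $I(Y) \twoheadrightarrow R$ then produces the desired epimorphism from $I(Y) \in I(\cD_t)$ onto $P_k(\emptyset)$.

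I expect the main technical point to be the compatibility of the truncation functors $\mathit{i}^k_!$ and $j^k_!$ under the equivalence $F_{m,n}|_{\cV_t^k}$. This should be essentially formal, following from the characterization of these functors as left adjoints to the full abelian inclusions $\cV_t^k \hookrightarrow \cV_t$ and $\rp^k(\mathfrak{gl}(m|n)) \hookrightarrow \rp(\mathfrak{gl}(m|n))$, together with the compatibility of $F_{m,n}$ with these inclusions.
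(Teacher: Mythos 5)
The proposal correctly isolates the technical crux --- the compatibility of the two truncation functors under $F_{m,n}$ --- but wrongly dismisses it as ``essentially formal.'' Proving exactly this compatibility is the entire content of the paper's proof, and it is where the nontrivial representation theory enters.

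Concretely: the square of functors
\begin{equation*}
\begin{CD}
\cV_t^k @>{\mathit{i}^k}>> \cV_t \\
@V{F_{m,n}|_{\cV_t^k}}VV @VV{F_{m,n}}V \\
\rp^k(\g) @>{j^k}>> \rp(\g)
\end{CD}
\end{equation*}
commutes, and passing to the left adjoints of the horizontal arrows produces a \emph{mate} natural transformation $j^k_!\circ F_{m,n}\to (F_{m,n}|_{\cV_t^k})\circ \mathit{i}^k_!$. Evaluated at $I(Y)$, this is precisely the paper's morphism $p':Q\to F_{m,n}(\mathit{i}^k_!(I(Y)))$. But a mate is not automatically an isomorphism, and there is no formal reason for it to be one here: $F_{m,n}$ is exact (indeed, an equivalence) only on the truncations $\cV_t^l$ with $l$ small relative to $\min(m,n)$, whereas $I(Y)$ lives in $\cV_t^{2mn}$, far outside that range. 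In particular, applying $F_{m,n}$ to the short exact sequence $0\to K\to I(Y)\to\mathit{i}^k_!(I(Y))\to 0$ does not preserve surjectivity of $I(Y)\to\mathit{i}^k_!(I(Y))$, so you cannot conclude that $P(0)=F_{m,n}(I(Y))$ surjects onto $F_{m,n}(\mathit{i}^k_!(I(Y)))$.

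The paper establishes that $p'$ is an isomorphism by two separate arguments, neither of which is abstract nonsense. Surjectivity of $p'$ uses the ``Moreover'' clause of Lemma \ref{lem:trivialcosocle} (that $F_{m,n}(I(Y)\twoheadrightarrow\triv)$ is an epimorphism, itself proved via compatibility of translation functors) together with $\operatorname{cosoc}(\mathit{i}^k_!(I(Y)))\cong\triv$. Injectivity of $p'$ uses the fullness of $F_{m,n}\circ I:\cD_t\to\rp(\g)$ (a theorem of Brundan--Stroppel, cited as \cite{BS}) to lift an inclusion $Q\hookrightarrow F_{m,n}(D)$ with $D\in I(\cD_t)$ to a morphism in $\cV_t$, which is then factored through $\mathit{i}^k_!(I(Y))$. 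Your projectivity detour is also unnecessary once the compatibility is known, since $F_{m,n}|_{\cV_t^k}$ being an equivalence already identifies $\mathit{i}^k_!(I(Y))$ with $P_k(\emptyset)$; but the compatibility itself is the genuine gap in your argument.
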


\begin{proof}
Let $m,n$ be such that $\cV^k_t$ is equivalent to $\rp^k(\g)$, $\g=\mathfrak{gl}(m|n)$. Let $P(0)$ denote the projective cover of the trivial module
in $\rp(\g)$ and we set 
$$Q:= j^k_!(P(0)) = F_{m, n}(P_k(\emptyset))$$
where $j^k_!$ is the left adjoint to the inclusion functor $j^k: \rp^k(\g) \rightarrow \rp(\g)$ (cf. Subsection \ref{ssec:properties_cat_V_t}).

In particular, $Q$ is the projective cover of the trivial module in $\rp^k(\g)$.

Next we consider $R:=I(Y)$, where $Y$ is the object obtained in Lemma \ref{lem:trivialcosocle} for the pair $(m, n)$.

Let $$Z := \mathit{i}^k_!(R)$$ (notation as in Subsection \ref{ssec:properties_cat_V_t}). Then $Z$ is the maximal quotient of $R$ which lies in $\cV_t^k$, and we denote by $\phi$ the epimorphism $R \twoheadrightarrow Z$:
$$ \xymatrix{ &R \ar@{->>}[r]^{\phi} \ar@{->>}[rd] &Z \ar@{->>}[d]^{\pi} \\ &{} &\triv}
$$
We also note that $cosoc(Z) \cong \triv$.

We now apply $F_{m, n}$ to the above diagram.


The map $F_{m, n}(\phi): F_{m, n}(R) \cong P(0) \longrightarrow F_{m, n}(Z)$ factors uniquely through $p:P(0) \twoheadrightarrow Q$ and we obtain a map $p': Q \rightarrow F_{m,n}(Z)$ such that $$ F_{m, n}(\phi) = p'\circ p$$ 

We shall prove that $p'$ is an isomorphism. We start by showing that it is an epimorphism.

\mbox{}

Consider the morphism $$F_{m, n}(R \twoheadrightarrow \triv): P(0) \rightarrow \triv$$

It is equal to $F_{m, n}(\pi) \circ F_{m, n}(\phi) = F_{m, n}(\pi) \circ p' \circ p$.

By Lemma \ref{lem:trivialcosocle}, $F_{m, n}(R \twoheadrightarrow \triv)$ is an epimorphism. Therefore $F_{m, n}(\pi) \circ p':Q \twoheadrightarrow \triv$ is an epimorphism. We obtain the following commutative diagram
$$ \xymatrix{ &P(0) \ar@{->>}[rr]^{F_{m, n}(\phi)} \ar@{->>}[rrrd] \ar@{->>}[d]_{p} &{} &F_{m, n}(Z) \ar@{->>}[rd]^{F_{m, n}(\pi)} &{} \\ &Q \ar@{->>}[rrr] \ar[rru]|{p'} &{} &{} &\triv}
$$

We now recall that $cosoc(F_{m, n}(Z)) \cong \triv$ (since $F_{m, n}: \cV^k_t \rightarrow \rp^k(\g)$ is an equivalence). This implies that $p'$ is an epimorphism. 

\mbox{}

Next, we show that $p'$ is a monomorphism.

Recall that by definition of $\rp^k(\gl(m|n))$, $Q$ is a subquotient (and thus a subobject, since $Q$ is projective) of some finite direct sum of $T^{p, q}$, 
$p + q \leq k$. This implies that there exists an inclusion $f:  Q \hookrightarrow F_{m, n}(D)$, where $D$ is an object in $ I (\mathcal{D}_t^k) $.

Next, the functor $F_{m, n} \circ I : \cD_t \rightarrow \rp({\gl(m|n)})$ is full (\cite{BS}, \cite{CW}), so there exists 
$\alpha: R \rightarrow D$ such that $F_{m, n}(\alpha) = f \circ p$. 

By definition of $Z$, $\alpha$ factors through $\phi$ and we obtain $\alpha': Z \rightarrow D$ such that $\alpha' \circ \phi = \alpha$.

By definition of 
$\alpha$, $$f \circ p = F_{m, n}(\alpha) = F_{m, n}(\alpha') \circ F_{m, n}(\phi) = F_{m, n}(\alpha') \circ p' \circ p$$

Since $p: P(0) \rightarrow Q$ is surjective, by cancellation law we have: $$f = F_{m, n}(\alpha') \circ p'$$ 
In particular, we conclude that $p': Q \to F_{m, n}(Z)$ is a monomorphism, since $f = F_{m, n}(\alpha') \circ p'$ is.

The following commutative diagrams sum up the above constuctions:

In $\cV_t$, we have
$$ \xymatrix{ &R \ar[r]^{\alpha} \ar@{->>}[d]_{\phi}  &D \\   &Z \ar[ur]_{\alpha'} \ar@{->>}[r]_{\pi} &\triv}$$

In $\rp(\gl(m|n))$, we have
$$ \xymatrix{ &{P(0) = F_{m, n}(R)} \ar[r]^-{f \circ p} \ar@{->>}[d]_{p} \ar[rrrd]|{F_{m, n}(\phi)} &F_{m, n}(D)  &{} &{} &{} &{} \\ &Q 
\ar@{->>}[rrr]_{p'} \ar@{^{(}->}[ru]|f &{} &{} &F_{m, n}(Z) \ar@/_1.5pc/[llu]_{F_{m, n}(\alpha')} \ar@{->>}[rr]^{F_{m, n}(\pi)} &{} &\triv} $$

%
%
%

Thus $p'$ is an isomorphism and $Z$ is isomorphic to $ P_k(\emptyset)$ (since $\rp^k(\g)$ is equivalent to $\cV^k_t$), the projective cover of $\triv$ in $ \cV^k_t$, and is a quotient of $R = I(Y)$, $Y \in \cD_t$.

\end{proof}

We now prove Step 2 of Proposition \ref{prop:presentation}.
\begin{lemma}\label{lem:simplequotient} Let $V_t(\lambda)$ be a standard object in $\cV_t$. There exists an object $D\in I(\cD_t)$ such that $V_t(\lambda)$ is a
quotient of $D$.
\end{lemma}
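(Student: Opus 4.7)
My plan is to construct, for each bipartition $\lambda$, an object $R\in I(\cD_t)$ such that the maximal quotient of $R$ inside $\cV_t^{k}$ (with $k=|\lambda|$) is the projective cover $P_{k}(\lambda)$ of $L_t(\lambda)$. Since $P_k(\lambda)$ admits a canonical surjection onto $V_t(\lambda)$ (the standard module sits at the top of the standard filtration of its projective cover, cf.\ Corollary \ref{cor:kacrep}(1)), the composition will yield the desired epimorphism $R \twoheadrightarrow V_t(\lambda)$ with $R\in I(\cD_t)$.

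To build $R$, I would mimic the strategy of Lemma \ref{lem:trivialcosocle}. First I choose a typical bipartition $\lambda_0$, i.e., one whose infinite weight diagram $d_{\lambda_0}$ contains no $\times$, such that $d_\lambda$ is obtainable from $d_{\lambda_0}$ by a sequence of elementary collapses $<>\mapsto\times\circ$ and $><\mapsto\circ\times$, each being of the type appearing in Lemma \ref{lem:GS}(c),(d). Combinatorially, such a $\lambda_0$ exists: it is obtained by ``opening'' every cap in the cap diagram of $d_\lambda$, replacing each atypical $\times$ at the left end of a cap and the $\circ$ at its right end by a suitably positioned $<$ and $>$. For such typical $\lambda_0$, Lemma \ref{lem:multmain} forces $V_t(\lambda_0)=L_t(\lambda_0)$, and this simple object is a direct summand of some mixed tensor power $V_t^{\otimes p}\otimes (V_t^*)^{\otimes q}$, hence lies in $I(\cD_t)$. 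I then set
\[
 R:=\bar T^*_r\circ\cdots\circ\bar T^*_1\bigl(L_t(\lambda_0)\bigr),
\]
where $\bar T^*_1,\dots,\bar T^*_r$ are the translation functors on $\cV_t$ (Lemma \ref{lem:infblocks}(c), Remark \ref{rem:inftranslation}) corresponding to the successive collapses. Each $\bar T^*_i$ is tensoring with $V_t$ or $V_t^*$ followed by projection to a block; both operations preserve membership in $I(\cD_t)$ (the former since $I$ is monoidal, the latter since $I(\cD_t)$ is closed under direct summands), so $R\in I(\cD_t)$.

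It remains to identify $\mathit{i}^k_!(R)$ with $P_k(\lambda)$. Using Lemma \ref{lem:infblocks}(c), the functor $F_{m,n}$ intertwines translation functors, and by iterated application of Lemma \ref{lem:GS}(c),(d) to the typical projective $L(\lambda_0)=P(\lambda_0)$ one obtains $F_{m,n}(R)=P(\lambda)$ for sufficiently large $m,n$. Then, imitating the argument of Lemma \ref{lem:trivialprojective} — replacing the pair $(\triv, P(0))$ there by $(L_t(\lambda), P(\lambda))$ — gives $Z:=\mathit{i}^k_!(R)\cong P_k(\lambda)$, which then surjects onto $V_t(\lambda)$ as described. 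The principal obstacle I anticipate is this last identification: the clean isomorphism $Z\cong P_k(\lambda)$ requires the delicate cancellation argument of Lemma \ref{lem:trivialprojective} (relying on fullness of $F_{m,n}\circ I$ via \cite{BS}) adapted to arbitrary cosocle $L_t(\lambda)$, together with the combinatorial verification that a suitable typical $\lambda_0$ always exists and that $R$ has cosocle $L_t(\lambda)$ (obtained inductively from Lemma \ref{lem:GS}(e) applied in $\cV_t$).
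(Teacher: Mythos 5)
The paper proves this lemma by a completely different route: it invokes the exact SM functor $\Phi:\T_{\tilde{\mathfrak g}}\to\cV_t$ (Lemma \ref{lem:functortg}), which sends simples $\tV(\lambda)$ to standard objects $V_t(\lambda)$ and sends injectives to objects of $I(\cD_t)$, and then shows — using the Penkov--Styrkas socle-filtration multiplicity formula in terms of Littlewood--Richardson coefficients and a carefully chosen rectangle $\delta$ — that some indecomposable injective $\tY(\nu)$ in $\T_{\tilde{\mathfrak g}}$ surjects onto $\tV(\lambda)$. No translation functors and no projectives in $\cV_t^k$ enter the argument at all.

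Your proposal contains a genuine gap at its starting point. You want a \emph{typical} bipartition $\lambda_0$ in $\cV_t$, ``i.e., one whose infinite weight diagram $d_{\lambda_0}$ contains no $\times$,'' obtained by opening every cap of $d_\lambda$. But by the very definition of $d_\mu$ in $\cV_t$ (see the paragraph before Lemma \ref{lem:diagrams} and Lemma \ref{lem:diagrams}(2)), one has $d_\mu(s)=\times$ for all $s\ll 0$ for \emph{every} bipartition $\mu$; correspondingly, the cap diagram of $d_\lambda$ has \emph{infinitely many} caps (this is explicitly noted in the paper after Lemma \ref{lem:diagrams}). There is no bipartition whose infinite diagram is free of $\times$, and ``opening every cap'' would require infinitely many translation functors, so no finite composite $\bar T^*_r\circ\cdots\circ\bar T^*_1$ can realize your $R$. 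Note that Lemma \ref{lem:trivialcosocle}, which you are trying to mimic, does \emph{not} take a $\cV_t$-typical $\lambda_0$; it takes the specific bipartition $\lambda_0^\circ=(n,\dots,n)$, $\lambda_0^\bullet=\emptyset$, whose infinite diagram still has $\times$'s at positions $<-n$. What it actually uses is that $\lambda_0^\bullet=\emptyset$, so $L_t(\lambda_0)$ is a direct summand of the semisimple object $V_t^{\otimes mn}$; this is special to that choice and its finite weight diagram $f_{\lambda_0}$ (not $d_{\lambda_0}$) being typical in $\rp(\gl(m|n))$.

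There is also a structural mismatch to be aware of: in the paper, Step~3 of Proposition \ref{prop:presentation} (realizing $P_k(\lambda)$ as a quotient of an $I(\cD_t)$-object) is \emph{deduced from} Lemma \ref{lem:simplequotient} (Step~2), via the surjection $P_{2k}(\emptyset)\otimes V_t(\lambda)\twoheadrightarrow P_k(\lambda)$. Your plan inverts this order by first proving the $P_k(\lambda)$ statement directly and then quotienting down to $V_t(\lambda)$; that inversion is not circular in itself, but it means you would have to redo the full argument of Lemma \ref{lem:trivialprojective} (the fullness-of-$F_{m,n}\circ I$ cancellation game) for an arbitrary cosocle $L_t(\lambda)$, a step the paper only carries out for $\lambda=\emptyset$ precisely because the general case is handled by the $\T_{\tilde{\mathfrak g}}$ argument instead. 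In short: the translation-functor route might be salvageable with a correct choice of $\lambda_0$ (e.g.\ one with $\lambda_0^\bullet=\emptyset$) and a much more careful combinatorial analysis, but as written the construction cannot even get off the ground.
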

\begin{proof} Recall the category $\T_\tg$ and the exact SM functor $\Phi:\T_\tg\to\cV_t$. This functor takes simple objects $\tV(\lambda)$ in $\T_\tg$ to standard objects $V(\lambda)$; furthermore, for any injective $E \in \T_\tg$, $\Phi(E) \in I(\cD_t)$. Thus it is enough to show that for any simple object $\tV(\lambda)$ in $\T_\tg$ there exists an injective object $E$ and an epimorphism $E \twoheadrightarrow \tV(\lambda)$. We will show that there exists a bipartition $\nu$ such that 
the injective object $$\tilde Y(\nu) := \tV(\nu^{\circ}) \otimes \tV(\nu^{\bullet})$$
has a quotient isomorphic to $\tV(\lambda)$.

We recall from \cite[Theorem 2.3]{PS} the following multiplicity formula for $r \geq 1$:
\begin{equation}\label{eq:mult}
[\overline{soc}^{r}(\tilde Y(\nu)):\tV(\lambda)]=\sum_{\gamma: |\gamma|=r-1} N^{\nu^\circ}_{\lambda^\circ,\gamma}N^{\nu^\bullet}_{\lambda^\bullet,\gamma},
\end{equation}
where $\overline{soc}^{r}:=soc^{r}/soc^{r-1}$ denotes the $r$-th Loewy layer in socle filtration of $\tilde Y(\nu)$, $N^\alpha_{\beta,\gamma}$ denote Littlewood--Richardson coefficients and $\gamma$ is a partition of size $r-1$. 

Let $\delta$ be a rectangular partititon with height and width greater than $|\lambda|$. Let $\nu^\circ$ be obtained by adding $\lambda^\circ$ to the right of $\delta$ and $\nu^\bullet$ by adding
$\lambda^\bullet$ to the bottom of $\delta$. 

We claim that by \eqref{eq:mult},
\begin{align}\label{eq:mult2}
 [\tilde Y(\nu):\tV(\lambda)] = 1 \;\; \;\; \;\; \text{ and } \;\; \;\; \;\; [\tilde Y(\nu):\tV(\mu)]=0
\end{align}

for any bipartition $\mu$ with $|\mu|<|\lambda|$. If we prove this, then \eqref{eq:mult} would imply that $\tV(\lambda)$ lies in the cosocle of $\tV(\lambda)$, and we are done. 

The equalities \eqref{eq:mult2} follow from the following facts: 
\begin{itemize}
\item If $N^{\nu^\circ}_{\mu^\circ,\gamma} \neq 0$ then the number of rows of $\gamma$ is less or equal the number of rows of $\delta$.
 \item If $N^{\nu^\bullet}_{\mu^\bullet,\gamma}\neq 0$ then the number of columns of $\gamma$ is less or equal the number of columns of $\delta$.
\end{itemize}
 The first statement can be easily obtained from the combinatorial description of the Littlewood-Richardson coefficients (see \cite{FH})
\footnote{Alternatively, one can show that for any bipartition $\mu$ such that $|\mu|\leq |\lambda|$, $N^{\nu^\circ}_{\mu^\circ,\gamma}\neq 0$ implies that 
$\ell(\gamma) \leq \ell(\nu^\circ)=\ell(\delta)$ (here $\ell$ denotes the number of rows in a partition). This is a straightforward consequence of the definition of 
Littlewood-Richardson coefficients.}. The second statement follows from the first by the transpose symmetry of the Littlewood--Richardson coefficients.

%
%
%
%

The above statements imply that if
$N^{\nu^\circ}_{\mu^\circ,\gamma}N^{\nu^\bullet}_{\mu^\bullet,\gamma}\neq 0$, then $\gamma$ is a Young subdiagram of the rectangle diagram $\delta$. On the other hand, since the above Littlewood--Richardson coefficients are non-zero, we have $$|\gamma|= |\nu^\circ|-|\mu^\circ| = |\nu^\bullet|-|\mu^\bullet| $$ and so
$$ |\gamma| = \frac{1}{2}\left( |\nu|-|\mu|  \right) \geq \frac{1}{2}\left( |\nu|-|\lambda|  \right)  = |\delta|$$
Hence $[\tilde Y(\nu):\tV(\mu)]=N^{\nu^\circ}_{\mu^\circ,\gamma}N^{\nu^\bullet}_{\mu^\bullet,\gamma}\neq 0$ iff $\gamma = \delta$, which is possible only when $|\mu| = |\lambda|$.
Moreover, $N^{\nu^\circ}_{\lambda^\circ,\delta}=N^{\nu^\bullet}_{\lambda^\bullet,\delta}=1$, hence $[\tilde Y(\nu):\tV(\lambda)]=1$.

\end{proof}

\begin{remark}
 It is worth mentioning that Proposition \ref{prop:presentation} implies that the functors $$F_{m, n}: \cV_{t=m-n} \longrightarrow \rp(\g)$$ are full. This follows from the fact that the functors $F_{m, n} \circ I: \cD_{t=m-n} \longrightarrow \rp(\g)$ are full (cf. \cite{CW}).
\end{remark}

\subsection{Highest weight structure}
\begin{lemma}\label{lem:filtration} An indecomposable projective object $P_k(\lambda)$ has filtration by the standard objects $V_t(\mu)$ with
$\mu\geq\lambda$.
\end{lemma}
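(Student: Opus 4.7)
The plan is to reduce to the finite-rank setting and transfer the classical Kac flag of $P(\lambda)\in\rp(\g)$ down to $\rp^k(\g)$ via the left adjoint $j^k_!$. By Theorem~\ref{thrm:functorequiv}, choose $m,n$ with $m-n=t$ and $4k<\min(m,n)$, so that $F_{m,n}$ gives an equivalence $\cV_t^k\simeq\rp^k(\g)$ (with $\g=\mathfrak{gl}(m|n)$) sending $P_k(\lambda)$ to the projective cover of $L(\lambda)$ in $\rp^k(\g)$ and $V_t(\mu)$ to $V(\mu)$. Since $j^k_!$ is left adjoint to the inclusion $\rp^k(\g)\hookrightarrow\rp(\g)$, one has $P_k(\lambda)=j^k_!(P(\lambda))$, where $P(\lambda)$ is the projective cover of $L(\lambda)$ in $\rp(\g)$.

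Next I would invoke the classical BGG-type theorem for $\mathfrak{gl}(m|n)$: $P(\lambda)$ admits a Kac flag
\[
0=M_0\subset M_1\subset\cdots\subset M_r=P(\lambda),\qquad M_i/M_{i-1}\cong K(\mu_i),
\]
with all $\mu_i\geq\lambda$ and multiplicities given by BGG reciprocity $(P(\lambda):K(\mu))=[K(\mu):L(\lambda)]$. Setting $\bar M_i:=\mathrm{image}(M_i\to P_k(\lambda))$ yields a filtration of $P_k(\lambda)$ whose sections $\bar M_i/\bar M_{i-1}$ are quotients of $K(\mu_i)$ lying in $\rp^k(\g)$. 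By Corollary~\ref{cor:kacrep}(1), such a section is either $0$---when $\mu_i$ is not $k$-admissible, so the cosocle $L(\mu_i)$ lies outside $\rp^k(\g)$ and $K(\mu_i)$ admits no nonzero quotient there---or a quotient of $V(\mu_i)=j^k_!(K(\mu_i))$. The bound $\mu_i\geq\lambda$ is inherited directly from the Kac flag.

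The main obstacle is to show each nonzero section equals $V(\mu_i)$ exactly, rather than a proper quotient. I would handle this by a composition-factor count: for $\nu$ $k$-admissible,
\[
[P_k(\lambda):L(\nu)]=\sum_i [\bar M_i/\bar M_{i-1}:L(\nu)]\ \leq\ \sum_{i:\,\mu_i\ k\text{-admissible}}[V(\mu_i):L(\nu)]=\sum_{i:\,\mu_i\ k\text{-admissible}}[K(\mu_i):L(\nu)],
\]
using Proposition~\ref{prop:multiplicities} for the last equality. Comparing with $[P(\lambda):L(\nu)]=\sum_i [K(\mu_i):L(\nu)]$ and analyzing the kernel $Q=\ker(P(\lambda)\twoheadrightarrow P_k(\lambda))$---which carries only non-$k$-admissible composition factors, by block considerations (Remark~\ref{rem:weight}) combined with the fact that $\rp^k(\g)$ is a Serre subcategory---forces equality throughout, hence each nonzero section must coincide with $V(\mu_i)$. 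Alternatively, one may iterate the $\mathrm{Ext}^1$-vanishing of Corollary~\ref{cor:kacrep}(3) along the filtration to arrive at the same conclusion.
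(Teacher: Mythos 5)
Your approach is genuinely different from the paper's: they transfer the socle filtration of $\tT^{p,q}$ from the infinite-rank category $\T_{\tg}$ via the exact functor $\Phi$, realizing $P_k(\lambda)$ as a direct summand of $\mathit{i}^k_!(X_t^{p,q})$, whereas you attempt to push a Kac flag of $P(\lambda)$ in $\rp(\g)$ down through $j^k_!$. Unfortunately your argument has a real gap at the step you yourself flagged as the main obstacle.

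The claim that $Q=\ker\bigl(P(\lambda)\twoheadrightarrow P_k(\lambda)\bigr)$ carries only non-$k$-admissible composition factors is false. Every Kac section $K(\mu_i)$ in the flag of $P(\lambda)$ satisfies $[K(\mu_i):L(\lambda)]=1$ (BGG reciprocity). As soon as some $\mu_i$ fails to be $k$-admissible --- and for atypical $\lambda$ with $|\lambda|\le k$ this always happens, since moving $\times$'s increases $|\mu_i|$ past $k$ while keeping $\mu_i$ positive --- the corresponding section of your image filtration is zero, so $K(\mu_i)$ sits entirely inside $Q$. In particular $L(\lambda)$, which \emph{is} $k$-admissible, occurs in $Q$ with multiplicity at least $\#\{i:\mu_i\text{ not }k\text{-admissible}\}>0$. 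So $[Q:L(\nu)]\neq 0$ does happen for $k$-admissible $\nu$, the middle equality in your chain fails, and the count does not close. Block considerations and the Serre property of $\rp^k(\g)$ do not rescue this: the Serre property says $Q$ is the smallest submodule with quotient in $\rp^k(\g)$, but says nothing about the composition factors of $Q$ itself.

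Worse, the counting step is circular even in spirit: showing the inequality $[P_k(\lambda):L(\nu)]\le\sum_{\mu_i\ k\text{-adm}}[V(\mu_i):L(\nu)]$ is an equality is \emph{equivalent} to showing that each nonzero section $\bar M_i/\bar M_{i-1}$ is all of $V(\mu_i)$, which is precisely what you need to establish. The alternative via iterated $\Ext^1$-vanishing is not developed, and the usual criterion ``$\Ext^1(M,\check V(\mu))=0$ for all $\mu$ implies $M$ has a $V$-filtration'' is a theorem \emph{about} highest weight categories; invoking it here would presuppose Corollary~\ref{cor:highestweight}, which the paper derives from this very lemma. Some additional input --- e.g., the explicit socle-filtration construction in $\T_{\tg}$ used by the paper, or an independent proof that $j^k_!$ sends Kac-filtered objects to $V$-filtered objects --- is required to close the gap.
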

\begin{proof} By abuse of notation, we will denote $X_t^{p, q} := V_t^{\otimes p } \otimes {V_t^*}^{\otimes q}$ throughout this proof.

We have proved in the previous subsection that there exists an object $T\in I(\cD_t)$ such that $P_k(\lambda)$ is the maximal quotient of $T$ which 
belongs to $\cV_t^k$. We also know that $T$ is a direct summand in $X_t^{p, q}$ for some $p,q$.
To describe the kernel $N\subset T$ of the surjection $T\twoheadrightarrow P_k(\lambda)$ 
choose a basis $\psi_i$ in $$\bigoplus_{\min(p,q)\geq  s \geq \frac{p+q-k}{2}}\Hom(X_t^{p, q},X_t^{p-s, q-s}).$$ We can choose this basis from the diagram bases for these walled Brauer algebras (see \cite{BS} or \cite{CW}). First, we show that
$$N=T\cap\bigcap_{i} \Ker \psi_i$$ The inclusion $\subset$ is obvious, so we only need to prove $\supset$. 

Denote $K:= \bigcap_{i} \Ker \psi_i$. With the notation as in Subsection \ref{ssec:properties_cat_V_t}, we have: $$K=\bigcap_{i} \Ker \psi_i = \Ker (X_t^{p, q}) \rightarrow \mathit{i}^k_!(X_t^{p, q})$$ Thus the composition $$K \subset X_t^{p, q} \twoheadrightarrow T \twoheadrightarrow \mathit{i}^k_!(T) = P_k(\lambda)$$ is zero, which implies $T \cap K \subset N$.

Next, fix $r=\lceil \frac{p+q-k}{2} \rceil$.
Note that any diagram $d$ with $p,q$ nodes in the top row and $p-s,q-s$ nodes in the bottom row ($\min(p,q)\geq  s \geq r$) has at least $r$ horizontal links in the top row. So $d$ can be written as a product of diagrams $d = d_2 \circ d_1$ such that $d_1$ has no horizontal links in the bottom row and has $p,q$ nodes in the top row and $p-r,q-r$ nodes in the bottom row.


If we recall $\Phi:\mathbb T_{\tilde\g}\to \cV_t$, this implies that any $\psi_i$ factors as $$\psi_i = \varphi'_i \circ \varphi_i$$ where $\varphi_i\in \Phi\Hom(\tT^{p,q},\tT^{p-r,q-r})$, and $\varphi'_i \in\Hom(X_t^{p-r, q-r}, X_t^{p-s, q-s})$. 

Moreover, for any $\varphi\in \Phi\Hom(\tT^{p,q},\tT^{p-r,q-r})$ represented by a diagram of type $d_1$ one can find $i$ such that $\psi_i = \varphi$. 

Thus we have 
$$\bigcap_{i} \Ker \psi_i \cong \bigcap_{\varphi\in \Phi\Hom(\tT^{p,q},\tT^{p-r,q-r})}\Ker \varphi.$$  

Let $$\tilde K := \bigcap_{\phi\in \Hom(\tT^{p,q},\tT^{p-r,q-r})}\Ker \phi$$ be the corresponding object in $\mathbb T_{\tilde\g}$. 
This is the $r+1$-th term in the socle filtration of $\tilde T^{p,q}$ (see \cite{PS}).

Recall that the functor $\Phi$ is exact (see Lemma \ref{lem:functortg}), so it preserves finite limits and thus $$\Phi(\tilde{K}) \cong K$$

Let $X^{p,q}=T_1\oplus\dots\oplus T_s$ be a direct sum of indecomposable objects with $T_1=T$. Set
$$F^i(X^{p,q}):=\bigcap_{\varphi\in\Hom
  (X^{p,q},X^{p-i,q-i})}\Ker\varphi,\quad
F^i(T_j):=\bigcap_{\varphi\in\Hom (T_j,X^{p-i,q-i})}\Ker\varphi.$$
In particular, $F^r( X^{p,q}) = K$, $F^r(T) = N$.

If we denote by $e_j$ the projector $X^{p,q}\to T_j$, then for any $\varphi\in\Hom (X^{p,q},X^{p-i,q-i})$ we have
$$\bigcap_{j=1}^s\Ker(\varphi\circ e_j)=\bigoplus_{j=1}^s\Ker\varphi\cap T_j\subset\Ker\varphi.$$
That implies
$$F^i(T_j)=F^i(X^{p,q})\cap T_j,\quad F^i(X^{p,q})=\bigoplus_jF^i(T_j),$$
since
$$\bigcap_{\varphi\in\Hom(X^{p,q},X^{p-i,q-i})}\Ker\varphi=\bigcap_{\varphi\in\Hom(X^{p,q},X^{p-i,q-i})}\bigcap_j\Ker(\varphi\circ e_j).$$

On the other hand, $F^i(X^{p,q})/F^{i-1}(X^{p,q})$ is a direct sum of standard objects $V_t(\mu)$ for some $\mu$, hence
by Krull-Schmidt theorem $F^i(T)/F^{i-1}(T)$ is a direct sum of
standard objects. As it was shown above, $P_k(\lambda)\simeq T/F^r(T)$. Hence $P_k(\lambda)$ has a filtration 
by standard objects.

It remains to prove that all $V_t(\mu)$ which occur in $P_k(\lambda)$
satisfy the condition $\mu\geq\lambda$. For this we use Proposition
\ref{prop:kacrep}, which claims
$$\dim\Hom(V_t(\mu),\check V_t(\nu))=\delta_{\mu,\nu},\quad \Ext^1(V_t(\mu),\check V_t(\nu))=0.$$
If $V_t(\mu)$ occurs in $P_k(\lambda)$, then $\Hom(P_k(\lambda),\check V_t(\mu))\neq 0$, hence 
$[\check V_t(\mu):L_t(\lambda)]\neq 0$. The latter implies $\mu\geq \lambda$.
\end{proof}
\begin{corollary}\label{cor:highestweight} For any $k\geq 0$ the category $\cV_t^k$ is a highest weight category with duality, in the sense of \cite{CPS}, with standard objects (up to isomorphism) $V_t(\lambda)$, $|\lambda| \leq k$.
Hence the inductive completion of $\cV_t$ is also a highest weight category (with infinitely many weights).
\end{corollary}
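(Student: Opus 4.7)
The plan is to check the axioms of a highest weight category with duality from \cite{CPS} directly, for the abelian category $\cV_t^k$, using the partial order on bipartitions induced from the dominance order on weights of $\mathfrak{gl}(m|n)$ (equivalently, the order discussed in Lemma~\ref{lem:multmain} and Lemma~\ref{lem:filtration}). The indexing set for the weights is $\{\lambda : |\lambda| \le k\}$, the simple objects are $L_t(\lambda)$, the standard objects are $V_t(\lambda)$, and the costandard objects are their contragredient duals $\check V_t(\lambda)$; the duality is the contragredient functor $\check{\,}\colon\cV_t^k\to\cV_t^k$ extended from $\rp^k(\g)$ (see Subsection~\ref{ssec:properties_cat_V_t}).

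First I would check the axioms about standard objects. By construction, the cosocle of $V_t(\lambda)$ is $L_t(\lambda)$. The triangularity condition $[V_t(\lambda):L_t(\mu)]\ne 0\Rightarrow \mu\le\lambda$ with $[V_t(\lambda):L_t(\lambda)]=1$ follows from Lemma~\ref{lem:multmain} (the cap-diagram description of multiplicities manifestly gives multiplicity one and only weights $\mu$ that are obtained from $\lambda$ by moves that decrease the weight in the relevant order). The contragredient duality fixes simples (as in Subsection~\ref{sec:frrep} this passes to $\cV_t^k$), carries $V_t(\lambda)$ to $\check V_t(\lambda)$, and satisfies $\dim\Hom(V_t(\lambda),\check V_t(\mu))=\delta_{\lambda,\mu}$ and $\Ext^1(V_t(\lambda),\check V_t(\mu))=0$ by (the passage to $\cV_t^k$ of) Corollary~\ref{cor:kacrep}; these are precisely the orthogonality conditions between standard and costandard objects.

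The key axiom that remains is the existence of projective covers admitting a standard filtration whose top is the corresponding standard object and all of whose other subquotients are $V_t(\mu)$ with $\mu>\lambda$. The existence of enough projectives is already recorded in Subsection~\ref{ssec:properties_cat_V_t}. That the indecomposable projective cover $P_k(\lambda)$ admits a filtration by $V_t(\mu)$ with $\mu\ge\lambda$ is exactly Lemma~\ref{lem:filtration}; the fact that $V_t(\lambda)$ occurs exactly once at the top of the filtration follows from $[P_k(\lambda):L_t(\lambda)]=1$ combined with the triangularity $[V_t(\mu):L_t(\lambda)]=0$ for $\mu>\lambda$ (so the quotient $V_t(\lambda)$ must sit on top). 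This is the main (and only slightly delicate) step of the argument, but it reduces to Lemma~\ref{lem:filtration} together with the triangularity already established.

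Finally, for the inductive completion statement, I would use that the inclusion $\cV_t^k\hookrightarrow\cV_t^l$ for $k\le l$ is exact, preserves simples, standards, costandards, and the duality, and that for each weight $\lambda$ the object $V_t(\lambda)$ stabilizes as soon as $|\lambda|\le k$. The union $\cV_t=\varinjlim\cV_t^k$, and hence its inductive completion, then inherits the highest weight structure (indexed by all bipartitions, a poset which is no longer finite but locally finite in the relevant sense) from its finite pieces, with the same standard, costandard and duality data.
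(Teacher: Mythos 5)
The overall architecture is right and matches what the paper intends (the paper gives no proof, treating the corollary as a direct consequence of Lemma~\ref{lem:filtration}, Corollary~\ref{cor:kacrep}, and the properties listed in Subsection~\ref{ssec:properties_cat_V_t}), and you correctly identify the needed ingredients. However, the justification of the step ``\emph{$V_t(\lambda)$ occurs exactly once at the top of the filtration}'' contains two errors, and this is precisely the step you yourself flag as the delicate one.

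First, the asserted triangularity ``$[V_t(\mu):L_t(\lambda)]=0$ for $\mu>\lambda$'' has the inequality backwards. The triangularity you stated two sentences earlier (and which the paper uses, see the end of the proof of Lemma~\ref{lem:filtration}) is $[V_t(\mu):L_t(\lambda)]\neq 0 \Rightarrow \lambda\leq\mu$; so the vanishing holds for $\mu<\lambda$ (and incomparable $\mu$), while for $\mu>\lambda$ the multiplicity can very well be nonzero. Second, the claim $[P_k(\lambda):L_t(\lambda)]=1$ is not established anywhere in the paper, and it is in fact generally false for projective covers in a highest weight category with non-semisimple blocks: $[P_k(\lambda):L_t(\lambda)]=\sum_{\mu\geq\lambda}(\text{number of }V_t(\mu)\text{'s})\cdot[V_t(\mu):L_t(\lambda)]$, and the terms with $\mu>\lambda$ typically contribute. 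The fix is standard and uses only ingredients you already cited: by Corollary~\ref{cor:kacrep}(2),(3), $\Hom(V_t(\mu),\check V_t(\lambda))=\delta_{\mu,\lambda}\cdot\C$ and $\Ext^1(V_t(\mu),\check V_t(\lambda))=0$, so for any object with a $V$-filtration the number of copies of $V_t(\lambda)$ equals $\dim\Hom(-,\check V_t(\lambda))$. Applied to the projective cover, $\dim\Hom(P_k(\lambda),\check V_t(\lambda))=[\check V_t(\lambda):L_t(\lambda)]=[V_t(\lambda):L_t(\lambda)]=1$, so $V_t(\lambda)$ appears exactly once; and that copy is the top quotient because the cosocle of $P_k(\lambda)$ is $L_t(\lambda)$, while any top quotient $V_t(\mu_n)$ surjects onto its own cosocle $L_t(\mu_n)$, forcing $\mu_n=\lambda$. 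With this correction, the rest of your proof is sound.
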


\begin{remark}
 The objects $X \in I(\mathcal{D}_t)$ which lie in $\cV_t^k$ are both standardly filtered and self dual, making them tilting objects in the highest weight category $\cV_t^k$. This is similar to the situation in the abelian envelope of the Deligne category $\rp(S_t)$, see \cite{CO}.
\end{remark}

\subsection{Epimorphisms \texorpdfstring{in $\cV_t$}{}}

Let us calculate the spaces $\Hom_{\mathcal{D}_t}(X_t^{\otimes r} \otimes X_t^{* \otimes r}, \triv)$.

Recall that Schur functors $S^\lambda X_t$ are indecomposable in $\cD_t$
and 
\begin{equation}
\Hom_{\cD_t}(S^\lambda X_t,S^\mu X_t)=\begin{cases}0, & \lambda\ne\mu,\\
\C\cdot\id, & \lambda=\mu.\end{cases}
\end{equation}
(see \cite{D, CW}).

Schur-Weyl decomposition 
$$X_t^{\otimes r} = \bigoplus_{\lambda \vdash r}Y_{\lambda}\otimes S^{\lambda} X_t,$$ 
with $\lambda$ running over the set of all partitions of $r$, and $Y_{\lambda}$ 
being an irreducible $S_r$-representation corresponding to $\lambda$, gives us a decomposition
\begin{equation}
\label{eq:decomposition}
\Hom_{\mathcal{D}_t}(X_t^{\otimes r} \otimes X_t^{* \otimes r}, \triv) \cong \bigoplus_{\lambda \vdash r} \left( Y_{\lambda} \otimes Y^*_{\lambda} \right)
\cdot  ev_{S^{\lambda} X^*_t},
\end{equation}
where the evaluation map $ev_{S^{\lambda} X^*_t}$ is a generator of the one-dimensional
space $\Hom_{\mathcal{D}_t}(S^\lambda X_t \otimes S^\lambda X_t^*, \triv)$.

Epimorphisms in $\cV_t$ satisfy a very agreeable property.

\begin{proposition}\label{prp:splitting-ses}
  For any epimorphism $ M \stackrel{g}{\longrightarrow} M'$ in $\cV_t$ there exists a nonzero object $Z$ in $\cD_t$ such that the epimorphism
  $ M \otimes I(Z) \stackrel{g \otimes \id }{\longrightarrow} M' \otimes I(Z)$ is split. A similar property is valid for monomorphisms.
 \end{proposition}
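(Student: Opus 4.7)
I would first reduce the monomorphism case to the epimorphism case via the contragredient duality $\check{(-)}:\cV_t\to\cV_t$. This is an exact anti-self-equivalence that exchanges mono- and epimorphisms; since $I$ is symmetric monoidal, $\check{I(Z)}\cong I(Z^*)$, with $Z^*\in\cD_t$ nonzero iff $Z$ is. Hence the two statements are equivalent, and I only need to treat the epimorphism case.

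Now suppose $g:M\twoheadrightarrow M'$ is an epimorphism with kernel $K$, all living in some $\cV_t^k$, and let $\xi\in \operatorname{Ext}^1_{\cV_t}(M',K)$ be the class of $0\to K\to M\to M'\to 0$. Splitting of $g\otimes\id_{I(Z)}$ is equivalent to the vanishing of the corresponding class in $\operatorname{Ext}^1(M'\otimes I(Z),K\otimes I(Z))$, which by rigidity of $I(Z)$ is in turn equivalent to the vanishing of the image of $\xi$ under the map $\operatorname{Ext}^1(M',K)\to \operatorname{Ext}^1(M',K\otimes I(Z\otimes Z^*))$ induced by the coevaluation $\triv \to I(Z\otimes Z^*)$.

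The main ingredient is the highest-weight structure on $\cV_t^k$ from Corollary \ref{cor:highestweight}, together with the identification of $I(\cD_t)\cap\cV_t^k$ with the subcategory of tilting objects (remark after Corollary \ref{cor:highestweight}). In a highest-weight category with duality, $\operatorname{Ext}^1$ vanishes between standardly filtered objects and costandardly filtered objects, and tilting objects are both. So the plan is to find $Z$ such that, inside some large $\cV_t^l$, both $M'\otimes I(Z)$ and $K\otimes I(Z)$ acquire tilting filtrations (standardly and costandardly filtered respectively); this would force the target $\operatorname{Ext}^1$ group to vanish. I would produce $Z$ by using Proposition \ref{prop:presentation} to resolve $M'$ and $K$ by tilting objects in $I(\cD_t)$, and combining this with the tensor product formulas of Corollary \ref{cor:tp} and the translation-functor formalism of Lemma \ref{lem:infblocks}(c); a natural candidate is $Z=X_t^{\otimes r}\otimes X_t^{*\otimes s}$ of sufficiently large bi-degree, so that $Z\otimes Z^*$ behaves as a ``large enough'' tilting.

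The hardest part will be verifying that tensoring with $I(Z)$ actually produces the needed filtrations: one must control how arbitrary objects in $\cV_t^k$ interact with large tilting objects under tensor product, and in particular show that the limit category $\cV_t$ has enough ``tilting room'' to absorb any given finite extension obstruction. This reduction to a large-level situation, where all the relevant pieces sit inside the tilting subcategory and the Ext-vanishing from the highest-weight structure applies, is the crux of the argument.
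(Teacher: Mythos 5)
Your reduction of the monomorphism case to the epimorphism case via contragredient duality is fine (though $\check{I(Z)}\cong I(Z)$ rather than $I(Z^*)$, since the objects of $I(\cD_t)$ are self-dual under $\check{(\cdot)}$ — this does not affect the reduction). Your reformulation of the splitting condition as the vanishing of the image of $\xi$ under $\Ext^1(M',K)\to\Ext^1(M',K\otimes I(Z\otimes Z^*))$ is also correct.

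However, the crux of your argument — the claim that one can choose $Z$ so that, in some $\cV_t^l$, the object $M'\otimes I(Z)$ acquires a standard filtration and $K\otimes I(Z)$ a costandard filtration — is not justified and I do not believe it is true. Tilting objects do not form a $\otimes$-ideal: tensoring an arbitrary $M'$ (say a non-standard simple $L_t(\lambda)$) with a tilting object $I(Z)$ will not in general produce a $\Delta$- or $\nabla$-filtered object, no matter how large $Z$ is. The mechanism you are hoping for would require something like a Steinberg-type phenomenon, which is not available here, and none of the cited lemmas (Corollary~\ref{cor:tp}, Lemma~\ref{lem:infblocks}, Lemma~\ref{lem:loc_proj_almost_ideal} — which in any case concerns tensoring with a \emph{projective}, not a tilting, object, and requires truncation) supplies it. You also acknowledge this is the hardest step and leave it unproved, so the proposal as written is a plan with the key step missing.

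The paper's actual argument is much more elementary and does not pass through $\Ext$-vanishing or highest-weight theory at all. It first reduces to the case $M'=\triv$ by pulling back $g\otimes\id_{M'^*}$ along the coevaluation $\triv\to M'\otimes M'^*$. In that case one chooses an epimorphism $I(X)\twoheadrightarrow M$, replaces $I(X)$ by a mixed tensor power $V_t^{\otimes p}\otimes V_t^{*\otimes p}$, and uses the explicit Schur–Weyl decomposition~\eqref{eq:decomposition} of $\Hom_{\cD_t}(X_t^{\otimes p}\otimes X_t^{*\otimes p},\triv)$ to locate a direct summand $S^\lambda V_t\otimes S^\lambda V_t^*$ on which the map is a nonzero multiple of $ev_{S^\lambda V_t^*}$. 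Taking $Z=S^\lambda X_t^*$, the snake identity for dual objects splits $\id_Z\otimes ev_{S^\lambda V_t^*}$, hence the original epimorphism after tensoring by $I(Z)$. So the role played in your plan by highest-weight $\Ext$-vanishing is played in the paper by an explicit choice of $Z$ adapted to a single Schur summand.
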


\begin{proof}
 
 We start by considering the case $M'=\triv$. We can choose an epimorphism 
$I(X)\twoheadrightarrow M$ with $X\in\cD_t$. We obtain an epimorphism $\bar{g}:I(X) \rightarrow \triv$. 
 
 It would then be enough to find a nonzero object $Z\in\cD_t$   so that the epimorphism $\bar{g} \otimes \id_Z: T(X) \otimes T(Z) \rightarrow T(Z)$ is split.
 
Since any object of $\cD_t$ is a retract of the mixed tensor power of the generator
$X_t$, 
it is enough to verify the above statement in the case when $I(X) = V_t^{\otimes p} \otimes V_t^{* \otimes q}$. Moreover, since $\bar{g}$ is a epimorphism , we need to have $p=q$.
 
 Formula \eqref{eq:decomposition} tells us that there exists an embedding
$$ i:S^\lambda V_t\otimes S^\lambda V_t^* \to V_t^{\otimes p}\otimes V_t^{*\otimes  p}$$
such that the composition $\bar g\circ i$ is a nonzero multiple of 
$ev_{S^\lambda V_t}$. Therefore,  it is enough to verify that given a Young diagram $\lambda$, there exists an nonzero object $Z\in\cD_t$,  such that the epimorphism 
 $$ \begin{CD}                                                                                                                                                                                                                                                                                                      I(Z) \otimes S^{\lambda} V_t \otimes S^{\lambda} V_t^* @>{\id_Z \otimes ev_{S^{\lambda} V_t^*}}>> Z                                                                                                                                                                                                                                                                                                                    \end{CD}$$
 is split. But this is obviously true, for instance, for  $Z:= S^{\lambda} X_t^*$, when the statement  follows from the definition of a dual object.

 \mbox{}
 
 In the general case, we have an epimorphism $g:M \twoheadrightarrow M'$. 
Consider the pullback of the epimorphism 
$$g \otimes \id_{M'^*}:M\otimes M'^* \twoheadrightarrow M'\otimes M'^*$$ 
along $coev: \triv \rightarrow M' \otimes M'^*$. 
We obtain an epimorphism 
$$ (M\otimes M'^*) \times_{M'\otimes M'^*} \triv \twoheadrightarrow \triv.$$ 
We already know that there exists a nonzero $Z\in\cD_t$ such that the epimorphism $$I(Z) \otimes \left( (M\otimes M'^*) \times_{M'\otimes M'^*} \triv\right)   \twoheadrightarrow I(Z)$$ 
splits.

This gives us a morphism $\phi:I(Z) \rightarrow I(Z) \otimes (M\otimes M'^*)$ such that the following diagram is commutative:
 $$ \xymatrix{ &{I(Z) \otimes M\otimes M'^* } \ar[rr]^{\id_{I(Z)} \otimes g \otimes \id_{M'^*}} &{} &{I(Z) \otimes M'\otimes M'^*} \\ &{} &{} &I(Z) \ar[u]_{\id_Z \otimes coev } \ar[llu]^{\phi}}.$$
Denote the image of $\phi$ under the isomorphism $$\Hom_{\cV_t}(I(Z), I(Z) \otimes M\otimes M'^*) \stackrel{\sim}{\longrightarrow} \Hom_{\cV_t}(I(Z)\otimes M'^*, 
I(Z) \otimes M)$$ by $\bar{\phi}$. The above commutative diagram implies 
that $(\id_{I(Z)} \otimes g) \circ \bar{\phi} = \id_{I(Z)}\otimes M'^*$, 
i.e. the epimorphism $\id_{I(Z)}\otimes g$ splits.
\end{proof}

\section{Universal property}\label{sec:universal_prop}


In this section we will prove a universal property of the functor $I:\cD_t\to\cV_t$.
We use \InnaB{only} a few facts about the categories $\cD_t$ and $\cV_t$. 
This is why we believe the universality theorem we prove will be applicable to
other contexts. Therefore, we present it as a general theorem for a pair of SM categories satisfying certain properties we now list.
\begin{remark}
 \InnaB{The results below are stated for $\C$-linear categories, but hold for $\mathbbm{k}$-linear categories, where $\mathbbm{k}$ is any field.}
\end{remark}

\subsection{Assumptions}
\label{ss:ass}

In what follows $I:\cD\to\cV$ is a symmetric monoidal functor from an additive
$\C$-linear rigid SM category $\cD$ to a tensor $\C$-linear category $\cV$.

We assume the following.
\begin{enumerate}\label{req_subcat_D}
\item\label{req_subcat_D1} $I:\cD\to\cV$ is fully faithful.
\item\label{req_subcat_D2} Any $X\in\cV$ can be presented as an image of a 
map $I(f)$
for some $f:P\to Q$ in $\cD$.
\item\label{req_subcat_D3} For any epimorphism $X\to Y$ in $\cV$ there 
exists a nonzero $T\in\cD$
such that the epimorphism $X\otimes I(T)\to Y\otimes I(T)$ splits.
\end{enumerate}

Our functor $I:\cD_t\to \cV_t$ satisfies the above properties by Proposition \ref{prop:deligneconnection}, Proposition \ref{prop:presentation} and 
Proposition \ref{prp:splitting-ses}.

Our main theorem asserts that if a functor $I:\cD\to\cV$ satisfies the above properties, it is universal in the sense we will now formulate.

\

Universality of the functor $I:\cD\to\cV$ is naturally 2-categorical. We will
note that 2-category (in this paper) $\fZ$ means a category enriched over
categories. Thus, $\fZ$ has objects, and a category of morphisms $\Map_\fZ(x,y)$ between each pair of objects $x,y\in\fZ$ with a strictly associative composition.
  
$\fZ$ is enriched over groupoids if all $\Map_\fZ(x,y)$ are groupoids.
A groupoid is called contractible if there is a unique arrow (it is automatically an isomorphism) between any two objects.

\subsection{Main universality result}

We define a 2-category $\fX$, more precisely, a category enriched over groupoids,
as follows.

The objects of $\fX$ are pairs $(F,\cA)$ where $\cA$ is a \InnaA{$\C$-linear} 
tensor
category and $F:\cD\to\cA$ is a faithful symmetric monoidal (SM) 
\InnaA{$\C$-linear} functor.
 
Given two objects $(F,\cA)$ and $(G,\cB)$ in $\fX$, the groupoid $\Map_\fX(F,G)$
is defined as follows.

\begin{itemize}
\item 
Its objects are pairs $(U,\theta)$ where $U:\cA\to\cB$ is an exact SM 
\InnaA{$\C$-linear} functor 
and $\theta:U\circ F\to G$ is a SM isomorphism of functors.
\item
A morphism from $(U,\theta)$ to $(U',\theta')$ is a SM isomorphism of functors 
$U\to U'$ commuting with $\theta$ and $\theta'$. 
\end{itemize}

We are now able to formulate our main universality result.

\begin{theorem}\label{Mthm:uni-prop}
Assume that the functor $I: \cD \to\cV$ satisfies the assumptions 
\eqref{req_subcat_D1}--\eqref{req_subcat_D3}.
Then $I$ is an initial object in $\fX$.
The latter means that for any object $F:\cD \to \cA$ the groupoid
$\Map_\fX(I,F)$ is contractible. 
\end{theorem}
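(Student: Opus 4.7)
The plan is to construct, for any faithful SM functor $F:\cD\to\cA$, an exact SM functor $U:\cV\to\cA$ together with an SM isomorphism $\theta:U\circ I\xrightarrow{\sim}F$, and then to verify that this data is unique up to unique SM isomorphism. Uniqueness is the easier half: given two such extensions $(U,\theta)$ and $(U',\theta')$, assumption (2) lets us write any $X\in\cV$ as $\Im(I(f))$ for some $f:P\to Q$ in $\cD$; since $U$ and $U'$ are exact SM functors, one obtains canonical isomorphisms $U(X)\cong\Im(U\circ I(f))\cong\Im(F(f))\cong U'(X)$ via $\theta$ and $\theta'$. This forces the candidate natural isomorphism $\alpha:U\to U'$, and the remaining verifications (independence of the chosen presentation, naturality in $X$, and SM coherence) follow formally from full faithfulness of $I$ and the coherence of $\theta,\theta'$.

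For existence, the definition of $U$ on objects is forced by $U(X):=\Im(F(f))$ whenever $X=\Im(I(f))$. The real content is well-definedness and functoriality. To lift a morphism $\phi:X\to Y$ to a morphism of presentations, given epimorphisms $\pi_X:I(P)\twoheadrightarrow X$ and $\pi_Y:I(Q)\twoheadrightarrow Y$, one cannot in general lift $\phi\circ\pi_X$ directly through $\pi_Y$. This is precisely where assumption (3) enters: there exists a nonzero $T\in\cD$ such that $\pi_Y\otimes\id_{I(T)}$ splits, and then $\phi\otimes\id_{I(T)}$ admits a lift $I(P)\otimes I(T)\to I(Q)\otimes I(T)$. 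Since $I$ is fully faithful and symmetric monoidal, this lift comes from a unique morphism $\tilde\phi:P\otimes T\to Q\otimes T$ in $\cD$, to which we may apply $F$.

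To descend the resulting $F(\tilde\phi)$ to a well-defined $U(\phi):U(X)\to U(Y)$ one needs tensoring by $F(T)$ in $\cA$ to be cancellable. Here the assumptions on $\cA$ (rigid tensor category with $\End(\triv)=\C$) become crucial: $F(T)\ne 0$ because $F$ is faithful and $T\ne 0$, and in any such tensor category $-\otimes N$ with $N\ne 0$ is a faithful exact endofunctor. The same mechanism delivers exactness of $U$: given a short exact sequence in $\cV$, apply (3) to split it after $\otimes I(T)$, apply the additive functor $U$ to obtain a split exact sequence in $\cA$ of the form $(-)\otimes F(T)$, and cancel $F(T)$ by faithfulness. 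The SM structure on $U$ is then imported from $F$ by comparing presentations of $X\otimes Y$ with the tensor product of presentations of $X$ and $Y$, which is permitted because $\otimes$ is biexact in $\cA$.

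The main obstacle, in my view, is organizing the auxiliary objects $T$ consistently. The splitting object provided by (3) is not canonical, and both well-definedness of $U$ on morphisms and functoriality under composition $X\to Y\to Z$ require comparing constructions made with different choices of $T$. The mechanism for this comparison is to pass to the common refinement $T\otimes T'$ and use the faithfulness of $-\otimes F(T\otimes T')$, but the bookkeeping must be done carefully, and it is where assumption (3) gets most intensively exploited. Once functoriality and well-definedness of $U$ are in place, the remaining assertions---compatibility with $\theta$, exactness, the SM coherence for $U$, and the fact that the $\alpha$ produced by the uniqueness argument is itself SM---are formal consequences of (1)--(3).
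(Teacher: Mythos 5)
Your route diverges from the paper's, and it has a genuine gap in the key step. The paper splits the problem: it first reduces from exact SM functors (the $2$-category $\fX$) to merely \emph{pre-exact} functors (the $2$-category $\fY$) via Proposition~\ref{Mprp:XtoY}, and assumption~(3) enters \emph{only} in Lemma~\ref{Mlem:preexact}, to show that a faithful SM functor $\cD\to\cA$ is automatically pre-exact. After that, the universality in $\fY$ (Theorem~\ref{Mthm:uni-prop-0}) is established using only (1) and~(2): given $\phi:X\to Y$ with presentations $I(P)\twoheadrightarrow X\hookrightarrow I(Q)$ and $I(P')\twoheadrightarrow Y\hookrightarrow I(Q')$, the paper lifts $\phi$ not by tensoring, but by picking, via~(2), an epi $I(R)\twoheadrightarrow I(P)\times_Y I(P')$ in $\cV$ (and dually an $S$); well-definedness is then the connectedness of the nerve of the category $\cC_{f,\alpha,\beta,\alpha',\beta'}$ (Proposition~\ref{prop:C-connected}). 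No cancellation of a tensor factor is ever needed.

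Your approach, by contrast, uses~(3) directly to lift $\phi\otimes\id_{I(T)}$, applies full faithfulness to land in $\cD$, applies $F$, and then proposes to ``cancel'' $F(T)$. This is where the gap lies: what you obtain is a map
$\Phi:U(X)\otimes F(T)\to U(Y)\otimes F(T)$
in $\cA$, and you invoke faithfulness of $(-)\otimes F(T)$ to recover $U(\phi)$. But faithfulness of $(-)\otimes N$ for $N\neq 0$ says only that
$\Hom(A,B)\to\Hom(A\otimes N,B\otimes N)$
is \emph{injective}; it gives uniqueness of a $g$ with $\Phi=g\otimes\id_N$, not its existence. In general this map is far from surjective (e.g. already for $A=B=\triv$ the target is $\End(N)$, which need not be $\C$), and there is no canonical retraction unless $\dim N$ is invertible --- which fails here, since dimensions in $\cV_t$ can vanish. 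So the existence of a well-defined $U(\phi)$, which is the heart of the matter, is not established by your argument. The same issue infects your exactness argument: applying $U$ to a sequence and cancelling $F(T)$ again presupposes the cancellation. Nothing in your sketch explains \emph{why} the map $\Phi$ you construct should lie in the subspace $\Hom(U(X),U(Y))\otimes\id_{F(T)}$. The paper's fiber-product/zigzag construction is precisely the mechanism that sidesteps this, and is why assumption~(3) is quarantined to the $\fX\to\fY$ reduction rather than used to lift individual morphisms.
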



The following result can be easily shown to be equivalent to
Theorem~\ref{Mthm:uni-prop}. 
\begin{theorem}\label{Mthm:uni-2}
Under the same assumptions the functor $I$ induces for any \InnaC{tensor} category $\cA$ an equivalence of the following categories
\begin{itemize}
\item $\Fun^\ex(\cV,\cA)$, the category of exact SM \InnaA{$\C$-linear} 
functors $\cV\to\cA$,
\item $\Fun^\faith(\cD,\cA)$, the category of faithful SM 
\InnaA{$\C$-linear} functors $\cD\to\cA$. 
\end{itemize} 
\end{theorem}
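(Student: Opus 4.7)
My plan is to construct, given a faithful SM functor $F:\cD\to\cA$ to a tensor category, an exact SM functor $U:\cV\to\cA$ together with a SM isomorphism $\theta:U\circ I\xrightarrow{\sim}F$, and to show that the pair $(U,\theta)$ is unique up to unique SM isomorphism. The guiding observation is that property (2), applied once to $X\in\cV$ and once to the kernel of the resulting epimorphism, produces an exact sequence $I(P_1)\xrightarrow{I(g_X)}I(P_0)\to X\to 0$ with $g_X$ a morphism in $\cD$. An exact SM $U$ extending $F$ must send $X$ to $\operatorname{coker}(F(g_X))$, which both motivates the construction and, together with Step 3 below, yields uniqueness.

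The construction itself sets $U(X):=\operatorname{coker}(F(g_X))$ for a chosen presentation $g_X$ of each $X$. A morphism $\alpha:X\to Y$ in $\cV$ is sent to $U(\alpha)$ defined by lifting $\alpha$ to a morphism between chosen presentations; composition with the epi $I(P_{0,X})\twoheadrightarrow X\xrightarrow{\alpha}Y\hookrightarrow \cdots$ produces a map in $\cV$ between objects in $I(\cD)$, which by full faithfulness (1) comes from a morphism in $\cD$. The main technical obstacle is verifying that $U(X)$ and $U(\alpha)$ are well-defined and functorial. The key lemma, which does all the work, is a \emph{lifting-after-tensoring} principle powered by (3): given $h:I(P)\to I(Q)$ whose image in $\cV$ is contained in a subobject $Y$ admitting an epimorphism $I(P')\twoheadrightarrow Y$, there exists a nonzero $T\in\cD$ and a morphism $\tilde h:P\otimes T\to P'\otimes T$ in $\cD$ such that $I(\tilde h)$ realises the factorisation after tensoring with $I(T)$. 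Indeed, (3) supplies the splitting of $I(P'\otimes T)\twoheadrightarrow Y\otimes I(T)$; composing with it and using (1) to descend the lift from $\cV$ into $\cD$ gives $\tilde h$.

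After applying $F$, this principle yields the desired factorisations in $\cA$ tensored with $F(T)$. To descend these to $\cA$ itself, I use that in any tensor category, tensoring with a nonzero object $F(T)$ is a faithful exact endofunctor: faithfulness of $F$ forces $F(T)\ne 0$ for $T\ne 0$, and simplicity of $\triv_\cA$ together with rigidity makes $\operatorname{coev}_{F(T)}:\triv\hookrightarrow F(T)\otimes F(T)^*$ a monomorphism, from which faithfulness of $-\otimes F(T)$ follows by a standard diagram chase. Consequently, identifications in $\cA$ that hold after tensoring with $F(T)$ already hold in $\cA$; comparing two presentations of the same $X$ (or of $\alpha$) by iterating the lemma in both directions gives canonical isomorphisms between the associated cokernels in $\cA$, establishing well-definedness and functoriality. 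Exactness of $U$ then follows because $U$ is built from cokernels in $\cD$ and every short exact sequence in $\cV$ is, after tensoring with some $I(T)$, a cokernel of a morphism in $I(\cD)$; the SM structure is inherited from $F$ via the chosen presentations and cohered by the same descent argument. Finally, contractibility of $\Map_{\fX}(I,F)$ is obtained by running the well-definedness argument on two candidates $(U_1,\theta_1),(U_2,\theta_2)$ simultaneously, yielding the required unique SM isomorphism $U_1\to U_2$. The main obstacle throughout is the coherent execution of this tensor-descent argument; everything else is formal once the lifting principle from (3) is in place.
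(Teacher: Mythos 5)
Your plan is to construct $U$ via cokernel presentations $I(P_1)\to I(P_0)\to X\to 0$, setting $U(X):=\operatorname{coker}F(g_X)$, and to use assumption (3) plus a lifting-after-tensoring lemma to compare presentations, finally ``descending'' isomorphisms from $\cA$-tensored-with-$F(T)$ to $\cA$. The paper's route is structurally different and sidesteps the place where your argument breaks. The paper uses \emph{image} presentations $I(X)\twoheadrightarrow M\hookrightarrow I(Y)$ (exactly what assumption (2) supplies), defines $U(M)$ as the image of $F(X)\to F(Y)$, and organizes the comparison of choices via the diagram categories $\cC_{f,\alpha,\beta,\alpha',\beta'}$, whose objects $(R,S)$ are produced purely with abelian-category operations: choose an epi $I(R)\twoheadrightarrow I(X)\times_N I(X')$ and a mono $I(Y)\sqcup^M I(Y')\hookrightarrow I(S)$ (again from (2)). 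The crucial structural advantage is that image presentations are stable under precomposition by epimorphisms and postcomposition by monomorphisms, so no lifting at the ``kernel level'' is needed; the induced maps on images are extracted directly in $\cA$, with no auxiliary tensoring. Assumption (3) is used by the paper only in Lemma~\ref{Mlem:preexact}, to show a faithful SM functor $\cD\to\cA$ is pre-exact, not in the construction of $U$.

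The gap in your argument is the descent step. From the lifting lemma you obtain a morphism $\phi_T : U(X)\otimes F(T)\to U'(X)\otimes F(T)$ comparing the cokernels of two presentations after tensoring. You then invoke faithfulness of $-\otimes F(T)$ and claim ``identifications in $\cA$ that hold after tensoring with $F(T)$ already hold in $\cA$,'' concluding that $U(X)\cong U'(X)$. But faithfulness of $-\otimes F(T)$ only says that if two \emph{existing} morphisms $A\to B$ become equal after tensoring then they were equal; it does not let you produce a morphism $A\to B$ from a morphism $A\otimes F(T)\to B\otimes F(T)$. That would require $-\otimes F(T)$ to be \emph{full}, which it is not. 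Concretely, the map $\phi_T$ is constructed from a lift $\tilde h:P\otimes T\to P'\otimes T$ in $\cD$ obtained via the section supplied by (3), and there is no reason for $F(\tilde h)$ (and hence $\phi_T$) to be of the form $\phi\otimes\operatorname{id}_{F(T)}$. ``Iterating the lemma in both directions'' does not help: you only ever get maps after tensoring by further nonzero objects, and you never show they are of split ($\phi\otimes\operatorname{id}$) form. The same difficulty also blocks the cokernel approach at the level of morphisms of presentations: because objects of $I(\cD)$ are not projective in $\cV$, you cannot lift a map $I(P_0'')\to I(P_0)$ over the kernels without tensoring, and once you tensor you are back to the descent problem. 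The paper's image-presentation device is precisely designed to avoid this, and your proof would need to be restructured along those lines (or you would need a genuine argument that the constructed maps are of split form) to close the gap.
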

\begin{remark}
 \InnaC{One can weaken the condition that $\cA$ is a tensor category by not requiring rigidity: namely, $\cA$ is an abelian $\C$-linear SM category with biexact, bilinear bifunctor $-\otimes -$ and a simple unit object $\triv \in \cA$ (hence $\End_{\cA}(\triv)=k$). In that case any dualizable object in $\cA$ is faithfully flat on the full subcategory of dualizable objects in $\cA$.}
\end{remark}

\begin{proof}

The categories of functors in question are groupoids (see for example \cite[Section 2.7]{D1}). A functor 
$f:G\to H$ between groupoids is an equivalence if and only if
for each $h\in H$ the fiber 
$$f_{/h}=\{(g,\theta)|g\in G,\theta:f(g) \stackrel{\sim}{\longrightarrow} h\}$$
is contractible (in particular, non-empty).

The composition with the functor $I:\cD\longrightarrow\cV$ yields the functor
$\Fun^\ex(\cV,\cA)\longrightarrow\Fun^\faith(\cD,\cA)$. Its fiber over $F:\cD\to\cA$
is precisely $\Map_\fX(I,F)$.

This proves the theorem.
\end{proof}

\subsubsection{}
Under the same assumptions \eqref{req_subcat_D1}--\eqref{req_subcat_D3} the 
functor $I:\cD\to\cV$ satisfies another universal property which we will now 
formulate. It has nothing to do with the 
SM structure of the categories involved and it would not be very appealing, 
would it  not appear as an intermediate step in the proof of Theorem~\ref{Mthm:uni-prop}.

\subsection{Pre-exact functors}
 
Let $\cC$ be an additive category endowed with two collections of arrows:
{\sl inflations} (playing the role of monomorphisms) and {\sl deflations} (playing the role of epimorphisms). The only important example for us is the category $\cD$ with inflations defined as the arrows becoming monomorphisms in $\cV$, and deflations becoming epimorphisms in $\cV$.
 
An additive functor $\cC\to\cB$ to an abelian category $\cB$ is called 
{\it pre-exact} if it takes inflations to monomorphisms and deflations to epimorphisms.
We will also call an additive functor $\cC\to\cB$ between two abelian categories pre-exact, if it preserves monomorphisms and epimorphisms.
 
\subsubsection{}
We define a 2-category $\fY$ as follows.

Its objects are pairs $(F,\cA)$ where $\cA$ is a $\C$-linear abelian
category and $F:\cD\to\cA$ is a \InnaA{$\C$-linear faithful} pre-exact functor.
 
Given two objects $(F,\cA)$ and $(G,\cB)$ in $\fY$, the groupoid $\Map_\fY(F,G)$
is defined as follows.

\begin{itemize}
\item 
Its objects are pairs $(U,\theta)$ where $U:\cA\to\cB$ is a 
\InnaA{$\C$-linear} pre-exact functor 
(that is, the one preserving monomorphisms and epimorphisms) and $\theta:U\circ F\to G$ is an isomorphism of functors.
\item
A morphism from $(U,\theta)$ to $(U',\theta')$ is an isomorphism of functors 
$U\to U'$
commuting with $\theta$ and $\theta'$. 
\end{itemize}

We claim the following 
\begin{theorem}\label{Mthm:uni-prop-0}
Under the assumptions \eqref{req_subcat_D1}--\eqref{req_subcat_D3}, the functor 
$I: \cD \to\cV$ is an initial 
object in $\fY$;
that is, for any object $F:\cD \to \cA$ the groupoid
$\Map_\fY(I,F)$ is contractible. 
\end{theorem}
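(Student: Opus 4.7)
The plan is to construct, for a given pre-exact $F:\cD\to\cA$, a pre-exact functor $U:\cV\to\cA$ together with an isomorphism $\theta:U\circ I\cong F$, and then to verify that these are essentially unique, so that $\Map_\fY(I,F)$ is contractible.

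The first step I would establish is that every object $X\in\cV$ is the image of an idempotent coming from $\cD$. By property (2) there is an epimorphism $I(P)\twoheadrightarrow X$; by property (3) it splits after tensoring with some $I(T)$, so $X\otimes I(T)$ is a retract of $I(P\otimes T)$; tensoring further with $I(T^*)$ and using coevaluation together with rigidity of $\cV$ shows that $X$ itself is a retract of $I(P\otimes T\otimes T^*)$. Property (1) then produces an idempotent $e_X\in\End_\cD(P_X)$ with $X\cong\Im I(e_X)$, where $P_X:=P\otimes T\otimes T^*$. Fix such a presentation once and for all for every object and set $U(X):=\Im F(e_X)\subset F(P_X)$, which makes sense since $\cA$ is abelian. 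For a morphism $\varphi:X\to Y$, the composite $I(P_X)\twoheadrightarrow X\xrightarrow{\varphi} Y\hookrightarrow I(P_Y)$ equals $I(h)$ for a unique $h:P_X\to P_Y$ in $\cD$ satisfying $h e_X = h = e_Y h$; applying the additive functor $F$ yields $F(h)F(e_X)=F(h)=F(e_Y)F(h)$, so $F(h)$ induces a canonical morphism $U(\varphi):U(X)\to U(Y)$. Functoriality, independence from the choice of idempotent presentation (two such are compared via $\cD$-morphisms afforded by (1)), and the natural isomorphism $\theta:U\circ I\to F$ (using $I(P)\cong\Im I(\id_P)$) are then formal.

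The substantial content is checking that $U$ is pre-exact. For an epimorphism $\varphi:X\twoheadrightarrow Y$, the lift $h:P_X\to P_Y$ satisfies $\Im I(h)=\Im I(e_Y)$ in $\cV$, and I want the corresponding equality $\Im F(h)=\Im F(e_Y)$ in $\cA$. The inclusion $\subset$ is immediate from $F(e_Y)F(h)=F(h)$; for the reverse inclusion, I would apply (3) to the epimorphism $I(P_X)\twoheadrightarrow Y$ to split it after tensoring with some $I(T')$, transfer the section via (1) to a $\cD$-morphism $\tau:P_Y\otimes T'\to P_X\otimes T'$ with $(h\otimes\id_{T'})\circ\tau=e_Y\otimes\id_{T'}$, and then combine this with rigidity of $\cV$ and the shape of the idempotents from the first step to produce a purely additive factorization of $F(e_Y)$ through $F(h)$. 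Preservation of monomorphisms follows dually using rigidity. Uniqueness of $(U,\theta)$ then reduces to the observation that a morphism $(U,\theta)\to(U',\theta')$ in $\Map_\fY(I,F)$ is forced on $I(\cD)$ by $\theta,\theta'$ and determined on the rest of $\cV$ by the presentations.

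The principal obstacle is the pre-exactness check: property (3) is a statement about tensor products in $\cV$, but $F$ carries no monoidal structure, so tensor identities obtained in $\cD$ (such as $(h\otimes\id_{T'})\circ\tau=e_Y\otimes\id_{T'}$) cannot be transported through $F$ by simple substitution. The resolution must replace each instance of tensoring by a composition of $\cD$-morphisms whose image under the additive functor $F$ carries the needed information; this is exactly where rigidity of $\cV$ and the fact that the idempotent presentations were chosen to live on objects of the form $P\otimes T\otimes T^*$ become essential.
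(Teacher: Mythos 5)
Your opening step is incorrect, and it is a fatal gap rather than a fixable detail. You claim that every $X\in\cV$ is a retract of some $I(P\otimes T\otimes T^*)$, and hence the image of an idempotent $e_X$ coming from $\cD$. Two independent objections:

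\emph{The zigzag argument does not give a retract.} After property (3) you correctly get $X\otimes I(T)$ as a retract of $I(P\otimes T)$, hence $X\otimes I(T)\otimes I(T^*)$ as a retract of $I(P\otimes T\otimes T^*)$. But to conclude that $X$ is a retract of $X\otimes I(T)\otimes I(T^*)$ you would need the composite $\triv\to T\otimes T^*\to\triv$ (coevaluation followed by evaluation) to be the identity of $\triv$. Rigidity gives only that it equals $\dim(T)\cdot\id_\triv$. Property (3) only guarantees $T\neq 0$, not $\dim(T)\neq 0$, and for $\cD_t$ with $t\in\Z$ there are plenty of nonzero objects of dimension zero (already $X_0$ itself when $t=0$). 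The zigzag identities relate $\id_T$ to a composite through $T\otimes T^*\otimes T$; they do not make $\triv$ a retract of $T\otimes T^*$.

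\emph{The conclusion would be absurd.} Suppose every $X\in\cV$ were the image of an idempotent $I(e_X)$ with $e_X\in\End_\cD(P_X)$. Since $\cD$ is Karoubian, $e_X$ already splits in $\cD$; since $I$ is additive and fully faithful it preserves this splitting, so $X$ would lie in the essential image of $I$. Thus $\cV$ and $I(\cD)$ would have the same objects up to isomorphism, contradicting the whole point of the construction: for $t\in\Z$, $\cV_t$ is abelian while $\cD_t\simeq I(\cD_t)$ is not. So no argument can make your Step 1 true, and the rest of the proposal (defining $U(X):=\Im F(e_X)$ and controlling pre-exactness through these idempotent presentations) collapses with it.

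The paper avoids exactly this trap: it keeps the weaker presentation of (2), namely $X\cong\Im I(f)$ with $f:P\to Q$ an arbitrary arrow of $\cD$, equivalently a pair $I(P)\twoheadrightarrow X\hookrightarrow I(Q)$, and defines $U(X)$ as the image of $F(f)$. The nontrivial bookkeeping — well-definedness of $U$ on morphisms, independence of the chosen presentation, functoriality — is handled by introducing, for each arrow $f:M\to N$ and each pair of presentations, a category $\cC_{f,\alpha,\beta,\alpha',\beta'}$ of compatible ``bridging'' diagrams, and showing these categories are nonempty with connected nerve (Proposition~\ref{prop:C-connected}). Pre-exactness of $U$ comes out almost for free from this framework (choose the presentation of the target so that the induced map on tops is the identity). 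Notably, property (3) is not used in the proof of Theorem~\ref{Mthm:uni-prop-0} at all; it enters only in Lemma~\ref{Mlem:preexact} and in the comparison of $\fX$ with $\fY$. Your instinct to use (3) and rigidity in the pre-exactness check therefore aims in the wrong direction: since $F$ is merely additive and pre-exact, not monoidal, tensor-level splittings in $\cV$ cannot be pushed through $F$, which you yourself flag as the ``principal obstacle''; the paper's answer to that obstacle is to never form any tensor product on the $\cA$ side within this proof.
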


\subsection{\texorpdfstring{}{Connection between the universality results}}

The assumptions \eqref{req_subcat_D1}--\eqref{req_subcat_D3} are assumed 
throughout this and the next section.

We will prove in Lemma~\ref{Mlem:preexact} below that a faithful SM functor $F:\cD\to\cA$ to a tensor category is necessarily pre-exact. This implies that an obvious forgetful functor $\#:\fX\to\fY$ is defined. The following result justifies our interest to  the 2-category $\fY$.

\begin{proposition}\label{Mprp:XtoY}
The forgetful functor $\fX\to\fY$ induces for any $F:\cD\to \cA$ in $\fX$ an equivalence
\begin{equation}
\label{Meq:XtoY}
\Map_\fX(I,F)\to\Map_\fY(I,F).
\end{equation}
\end{proposition}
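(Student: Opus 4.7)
The forgetful functor between Hom-groupoids $\Map_\fX(I,F)\to\Map_\fY(I,F)$ is obviously faithful (an SM isomorphism is a fortiori a plain isomorphism), so it remains to establish essential surjectivity and fullness. For essential surjectivity, take $(U,\theta)\in\Map_\fY(I,F)$---a pre-exact $U:\cV\to\cA$ together with a plain natural isomorphism $\theta:U\circ I\cong F$---and I will show $U$ admits a monoidal structure $\mu$ making $\theta$ an SM isomorphism, and that $U$ is automatically exact. To construct $\mu_{X,Y}$, use assumption (2) to present $X=\Im(I(f))$, $Y=\Im(I(g))$ with $f:P\to Q$, $g:P'\to Q'$ in $\cD$. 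Pre-exactness of $U$ together with $\theta$ identifies $U(X)\cong\Im(F(f))$ and $U(Y)\cong\Im(F(g))$; biexactness of $\otimes$ in the tensor category $\cA$ then gives $U(X)\otimes U(Y)\cong\Im(F(f)\otimes F(g))$. On the other side, biexactness of $\otimes$ in $\cV$ identifies $X\otimes Y=\Im(I(f\otimes g))$, hence $U(X\otimes Y)\cong\Im(F(f\otimes g))$, and the SM coherence of $F$ supplies $F(f)\otimes F(g)\cong F(f\otimes g)$. Composing these isomorphisms yields the required $\mu_{X,Y}$.

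For exactness of $U$, consider a short exact sequence $0\to X\to Y\to Z\to 0$ in $\cV$. Property (3) yields a nonzero $T\in\cD$ such that $Y\otimes I(T)\twoheadrightarrow Z\otimes I(T)$ splits; since $\otimes I(T)$ is exact in $\cV$, the whole tensored sequence splits, giving $Y\otimes I(T)\cong X\otimes I(T)\oplus Z\otimes I(T)$. Applying $U$, the SM structure just constructed, and $\theta$, we obtain in $\cA$ an isomorphism $U(Y)\otimes F(T)\cong (U(X)\oplus U(Z))\otimes F(T)$, induced by $U$ of the original sequence tensored with $\id_{F(T)}$. Faithfulness of $F$ and $T\neq 0$ yield $F(T)\neq 0$; since $F(T)$ is dualizable in the tensor category $\cA$, the functor $(-)\otimes F(T)$ is faithful (any $f$ with $f\otimes\id_{F(T)}=0$ vanishes by the standard zig-zag identity) and exact, hence reflects exactness of short sequences. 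It follows that $0\to U(X)\to U(Y)\to U(Z)\to 0$ is already exact in $\cA$.

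For fullness, suppose $\eta:U\to U'$ is a plain isomorphism of functors between two objects $(U,\theta),(U',\theta')$ of $\Map_\fX(I,F)$, compatible with $\theta$ and $\theta'$. The equality $\eta_{I(A)}=(\theta'_A)^{-1}\circ\theta_A$ combined with the SM nature of $\theta,\theta'$ forces $\eta$ to respect the SM structures on the full subcategory $I(\cD)$; for general $X,Y\in\cV$, using presentations $X=\Im(I(f))$, $Y=\Im(I(g))$, pre-exactness of $U$ and $U'$ pins down $\eta_X,\eta_Y,\eta_{X\otimes Y}$ as the unique morphisms extending their values on $I(P),I(Q),\ldots$, so the SM compatibility square for $\eta$ at $(X,Y)$ sits inside---and is determined by---the already commutative SM squares at the presentation level. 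The principal technical obstacle throughout is the well-definedness of $\mu$ in the first step: independence from the chosen presentations, and verification of the unit, pentagon, and symmetry coherence axioms. Both reduce, via naturality and pre-exactness of $U$, to the corresponding coherence properties of $F$ together with biexactness of $\otimes$ in $\cV$ and $\cA$; the verification is routine but bookkeeping-heavy.
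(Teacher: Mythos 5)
Your proposal is correct and follows essentially the same strategy as the paper: both reduce everything to presentations $I(P)\twoheadrightarrow M\hookrightarrow I(Q)$, use pre-exactness of $U$ and biexactness of $\otimes$ to transport the monoidal constraint along these presentations, and handle fullness by injecting the relevant naturality square into the corresponding square over $I(\cD)$ where commutativity is already known. The paper packages the essential-surjectivity step in the language of multicategories (building the compatible family of maps $\Hom_{\cV}(\{M_i\},N)\to\Hom_{\cA}(\{UM_i\},UN)$ from a chosen presentation of each $M_i$), whereas you work directly with binary $\mu_{X,Y}$; this is a cosmetic rather than structural difference, and both treatments defer the detailed coherence bookkeeping.

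The one genuine addition in your argument is the explicit verification that the SM functor $U$ so produced is \emph{exact} and not merely pre-exact, using assumption (3), the splitting lemma, and faithfulness of $(-)\otimes F(T)$. The paper's proof of essential surjectivity stops at ``$U$ extends to a SM functor'' and does not address exactness, even though arrows in $\fX$ are required to be exact SM functors; your extra paragraph fills this in. One small caveat: your parenthetical justification that $(-)\otimes F(T)$ is faithful ``by the standard zig-zag identity'' is too quick, since $F(T)$ may have dimension zero in $\cA$; the correct argument uses simplicity of $\triv$ in a tensor category to deduce that $\operatorname{coev}_{F(T)}$ is a monomorphism, hence $f\otimes\id_{F(T)}=0$ forces $(\id\otimes\operatorname{coev})\circ f=0$ and thus $f=0$. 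The conclusion you draw is nonetheless correct.
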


We proceed as follows. Proposition~\ref{Mprp:XtoY} is proved in Subsection~\ref{ss:proof-XtoY}.
It implies that our main universality theorem 
\ref{Mthm:uni-prop} follows from Theorem~\ref{Mthm:uni-prop-0}.


The proof of Theorem~\ref{Mthm:uni-prop-0} is presented in Section \ref{sec:proof_univ_prop}.
\begin{lemma}
\label{Mlem:preexact}
Any faithful SM functor $F:\cD\to\cA$ to a tensor category is pre-exact.
\end{lemma}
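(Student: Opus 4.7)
My plan is to show that $F$ preserves deflations; the inflation case then follows by duality. Indeed, $\cD$ is rigid and both $I$ and $F$ are symmetric monoidal, so they commute with taking duals up to canonical isomorphism. In any tensor category the duality functor is exact and exchanges monomorphisms with epimorphisms. Hence $f\colon P\to Q$ is an inflation iff $f^\vee\colon Q^\vee\to P^\vee$ is a deflation, and $F(f)$ is a monomorphism in $\cA$ iff $F(f^\vee)\cong F(f)^\vee$ is an epimorphism. So preservation of deflations yields preservation of inflations.

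For the deflation step, let $f\colon P\to Q$ be a deflation, so $I(f)$ is epi in $\cV$. Assumption~(3) produces a nonzero object $T\in\cD$ together with a section $s\colon I(Q)\otimes I(T)\to I(P)\otimes I(T)$ of $I(f)\otimes\id_{I(T)}$ in $\cV$. Since $I$ is symmetric monoidal (so $I(Q\otimes T)\cong I(Q)\otimes I(T)$) and fully faithful by assumption~(1), $s$ is the image under $I$ of a unique $\tilde s\colon Q\otimes T\to P\otimes T$ in $\cD$ with $(f\otimes\id_T)\circ\tilde s=\id_{Q\otimes T}$. Applying the symmetric monoidal functor $F$ yields $(F(f)\otimes\id_{F(T)})\circ F(\tilde s)=\id$, so $F(f)\otimes\id_{F(T)}$ is a split epimorphism in $\cA$.

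It remains to deduce that $F(f)$ itself is epi. Faithfulness of $F$ gives $F(T)\neq 0$: since $T\neq 0$ we have $\id_T\neq 0$, whence $\id_{F(T)}=F(\id_T)\neq 0$. In a tensor category $\cA$ with $\End(\triv)=\C$, tensoring with a nonzero dualizable object is biexact and faithful---equivalently, the tensor product of nonzero objects is nonzero. Since $\otimes$ preserves cokernels, $\operatorname{coker}(F(f))\otimes F(T)\cong\operatorname{coker}\bigl(F(f)\otimes\id_{F(T)}\bigr)=0$, forcing $\operatorname{coker}(F(f))=0$, so $F(f)$ is epi. The main obstacle I foresee is precisely this last step, invoking that $(-)\otimes F(T)$ is faithful when $F(T)\neq 0$. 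This is a standard structural property of tensor categories (reducible to simplicity of $\triv$), but it is the one nontrivial categorical input beyond the assumptions (1)--(3), and its use is essential, since the splitting produced by~(3) only controls $F(f)$ after tensoring with $F(T)$.
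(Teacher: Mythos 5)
Your proof is correct and follows essentially the same route as the paper's: invoke assumption~(3) to obtain a splitting after tensoring with a nonzero $T$, apply $F$ to get a split epimorphism after tensoring with $F(T)\neq 0$, and conclude using that tensoring with a nonzero object in a tensor category is faithful (the paper says ``faithfully flat''). You spell out two points the paper leaves implicit --- the reduction of the inflation case to the deflation case via duality, and the cokernel argument showing that split-after-tensoring implies epi --- but these are elaborations, not a different method.
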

\begin{proof}
Let $A\to B$ be a deflation in $\cD$.
\InnaB{By property \eqref{req_subcat_D3}}, there exists a
nonzero object $Z\in\InnaB{\cD}$ such that the map $A\otimes Z\to B\otimes Z$
 is split. This implies that $F(A)\otimes F(Z)\to F(B)\otimes F(Z)$ is surjective. The object $F(Z)$ is nonzero, therefore, faithfully flat. This implies that $F(A)\to F(B)$ is surjective.

The second part of the claim is proved similarly.
\end{proof}

\begin{lemma}\label{lem:pre_exact_to_exact}
 \InnaA{Let $U: \cV \to \cA$ be a pre-exact additive
\InnaA{$\C$-linear} SM functor, whose restriction to $\cD$ is faithful. Then 
$U$ is exact and faithful.} 
\end{lemma}
\begin{proof}
 \InnaA{We use the ``splitting of epimorphisms'' property \eqref{req_subcat_D3} 
of the 
categories $\cD, \cV$. Indeed, given a short exact sequence 
$$ 0 \to X \to Y \to Z \to 0$$ in $\cV$, we need to show 
that its image under $U$ is exact. The ``splitting of epimorphisms'' property 
implies that there exists an object $D \in \cD$ such that $Y\otimes D \to 
Z\otimes D \to 0$ splits. Since $U$ is $\C$-linear (hence preserves 
direct sums) and SM, the sequence
$$ 0 \to U(D) \otimes U(X) \to U(D) \otimes U(Y) \to U(D) \otimes U(Z) \to 0$$ 
is split exact. Since $U(D) \neq 0$, the object $U(D)$ is fully faithful in 
$\mathcal A$, and we conclude that the sequence $$ 0 \to  U(X) \to 
U(Y) \to  U(Z) \to 0$$ is exact as well. 

To show that $U$ is faithful, we recall that an exact functor $U$ is faithful iff for any object $L \in \cV$, $U(L) \neq0$. Let $L \in \cV$. Due to the presentation property \eqref{req_subcat_D2}, there exist $T, T' \in \cD$ and $f \in \Hom_{\cD}(T, T')$ such that $Im(f) = L$. Since $U$ is faithful on $\InnaB{\cD}$, $U(f) \neq 0$ and hence $U(L) = Im(U(f)) \neq 0$. }
\end{proof}

\subsection{Language of multicategories}

The best way to avoid taking care of various commutativity and associativity constrains, while working with SM categories, is to use the language of multicategories.

Let us remind some basic definitions.

\begin{definition}
A multicategory $\cC$ consists of the following data.
\begin{itemize} 
\item A collection of objects $\Ob\cC$.
\item A set $\Hom_\cC(\{x_i\},y)$ assigned to
any collection of objects $\{x_i\}_{i\in I}$ numbered by a finite set $I$ and to an object $y$.
\item Compositions
\begin{equation}\label{eq:multicomposition}
\Hom_\cC(\{y_j\},z)\times\prod_{j\in J}\Hom_\cC(\{x_i\}_{i\in f^{-1}(j)},y_j)\to\Hom_\cC(\{x_i\}_{i\in I},z),
\end{equation}
for any map $f:I\to J$ of finite sets.
\end{itemize}
The compositions should be associative and the sets $\Hom_\cC(\{x\},x)$
should have unit elements with the standard properties, see \cite[2.1.1]{L}.

\end{definition}

Any multicategory $\cC$ has an underlying category $\cC_1$ obtained by discarding all
non-unary operations. Any collection $\{x_i\}_{i\in I}$ defines a functor $\cC_1\to\Set$ carrying $y$ to $\Hom(\{x_i\},y)$. If this functor is (co)representable,
a representing object can be called $\bigotimes_{i\in I}x_i$. Finally, for each 
map $f:I\to J$ of finite sets a canonical map in $\cC_1$ is defined
\begin{equation}\label{eq:Mmulti}
\bigotimes_{j\in J}(\bigotimes_{i:f(i)=j}x_i)\to\bigotimes_{i\in I}x_i.
\end{equation}

\begin{definition}\label{dfn:sm-via-multi}
A multicategory $\cC$ is called SM category if all functors described above are 
corepresentable, and if all maps \eqref{eq:Mmulti} are isomorphisms.
\end{definition}

A functor $\cC\to\cD$ between multicategories is defined in an obvious way. 
Such a functor between SM categories is what is usually called a lax SM functor. 
It is a SM functor if a canonical morphism
\begin{equation}
\bigotimes_{i\in I}f(x_i)\to f(\bigotimes_{i\in I}x_i)
\end{equation} 
defined for any lax SM functor by universal property of tensor products,
is an isomorphism.

Finally, given two SM functors $f,g:\cC\to\cD$, a morphism $\theta:f\to g$
is just a collection of morphisms $\theta_x:f(x)\to g(x)$ for each $x\in\cC$
giving rise the the commutative diagrams
\begin{equation}
\begin{CD}
\Hom_{\cC}(\{x_i\},y) @>f>> \Hom_{\cD}(\{f(x_i\},f(y))  \\
@V{g}VV @VV{\theta(y)}V \\
\Hom_{\cD}(\{g(x_i)\},g(y))@>>{\theta(x_i)}>\Hom_{\cD}(\{f(x_i)\},g(y))
\end{CD}\notag
\end{equation}

for all $x_i,\ y\in\cC$.

\subsection{Proof of \texorpdfstring{Proposition~\ref{Mprp:XtoY}}{reduction statement}}
\label{ss:proof-XtoY}

We have to verify that for any $F:\cD\to\cA$ in $\fX$ the functor $\Map_\fX(I,F)\to\Map_\fY(I,F)$ induced by the forgetful functor $\#:\fX\to\fY$, is fully faithful
and essentially surjective.

{\sl Full faithfulness.} Let us verify that, given two arrows $(U,\theta)$ and $(U',\theta')$ in $\Map_\fX(I,F)$, any 2-arrow $\phi:(U,\theta)\to(U',\theta')$ in $\fY$ 
is automatically symmetric monoidal.

In other words, we have to verify that for any $M_i$ and $N$ in $\cV$   the diagram

\begin{equation}
\begin{CD}\label{Meq:2arrow}
\Hom_{\cV_t}(\{M_i\},N) @>U>> \Hom_{\cA_t}(\{UM_i\},UN)  \\
@V{U'}VV @VV{\phi(N)}V \\
\Hom_{\cA_t}(\{U'M_i\},U'N)@>>{\phi(M_i)}>\Hom_{\cA_t}(\{UM_i\},U'N)
\end{CD}\notag
\end{equation}
is commutative.
This is so for $M_i,\ N$ belonging to $I(\cD)$ as $\phi$ commutes with $\theta$ and $\theta'$. In order to verify the commutativity of the diagram (\ref{Meq:2arrow}) in general, choose epimorphisms $I(X_i)\twoheadrightarrow M_i$ and a monomorphism 
$N\hookrightarrow I(Y)$ with $X_i,\ Y$ in $\cD$. The diagram (\ref{Meq:2arrow}) will map injectively to the similar diagram for $X_i$ and $Y$ which is commutative.
This proves the claim.

{\sl Essential surjectivity.}
Now, given a morphism $(U,\theta):I\to F$ of functors, we have to extend it, up to isomorphism, to a morphism of SM functors. The functor $U:\cV\to \cA$ is given 
by a map $U:\Ob\ \cV\to\Ob\ \cA$ and a compatible collection of maps
\begin{equation}\label{eq:singlehoms}
\Hom_{\cV}(M,N)\to\Hom_\cA(UM,UN).
\end{equation}

We have to extend these data to a compatible collection of maps
\begin{equation}\label{eq:multihoms}
\Hom_{\cV}(\{M_i\},N) \to \Hom_{\cA}(\{UM_i\},UN).
\end{equation}

Choose for each finite collection of objects $\{X_i\}$ in $\cD$ a tensor product
$\otimes X_i$ with the universal element $\chi\in\Hom_{\cD}(\{X_i\},\otimes X_i)$. Similarly, we choose tensor product of each finite
 collection of objects in $\cV$ and in $\cA$.
 
Choose now for each $M_i\in\cV$ a presentation $I(X_i)\twoheadrightarrow M_i\hookrightarrow I(Y_i)$. We get (by universality of tensor products) the unique presentation
\begin{equation}\label{eq:pres-tensor}
\otimes I(X_i)\twoheadrightarrow \otimes M_i \hookrightarrow \otimes I(Y_i).
\end{equation}
We can now apply to \eqref{eq:pres-tensor} the functor $U$ and compare it
to the tensor product of presentations 
$UI(X_i)\twoheadrightarrow U(M_i) \hookrightarrow UI(Y_i)$. 
Taking into account that the functors $I$ and $F$ are symmetric monoidal, we get a canonical isomorphism $U(\otimes M_i)\to \otimes U(M_i)$.

We can now define the maps \eqref{eq:multihoms} as compositions
\begin{align}
\Hom_{\cV}(\{M_i\},N)=\Hom_{\cV}(\otimes M_i,N)\to\Hom_\cA(U(\otimes M_i),U(N))=\\ \nonumber=\Hom_\cA(\otimes U(M_i),U(N))=\Hom_\cA(\{UM_i\},UN).
\end{align}
Here we used the $=$ signs to denote canonical isomorphisms. 
Compatibility of  maps \eqref{eq:multihoms} with the compositions
\eqref{eq:multicomposition} directly follow (this is a long sequence 
of canonical morphisms) from the compatibility of  the maps
 \eqref{eq:singlehoms}  with the (usual) compositions. 
This means that $U$ extends to a SM functor\InnaA{, which we will denote $U$ 
as well. By Lemma \ref{lem:pre_exact_to_exact}, such a functor $U$ is exact.}

The isomorphism of functors $\theta:UI\to F$
is given by a collection of maps 
$\theta_X:UI(X)\to F(X)$
making the diagrams
\begin{equation}
\begin{CD}
\Hom_{\cD}(X,Y) @>UI>> \Hom_{\cA}(UI(X),UI(Y))  \\
@V{F}VV @VV{\theta_Y}V \\
\Hom_{\cA}(F(X),F(Y))@>>{\theta_X}>\Hom_{\cA}(UI(X),F(Y)) 
\end{CD}
\end{equation}
commutative.  The natural transformation $\theta$ is automatically symmetric monoidal since the maps \eqref{eq:multihoms} are expressed
via \eqref{eq:singlehoms}.
 \begin{remark}
  \InnaA{As it was proved in Lemma \ref{lem:pre_exact_to_exact}, extending a pre-exact faithful functor $U:\cD \to \cA$ we obtain a faithful exact functor $\cV \to \cA$. }
 \end{remark}



\section{Proof of \texorpdfstring{Theorem~\ref{Mthm:uni-prop-0}}{universal property}}\label{sec:proof_univ_prop}

We have a symmetric monoidal functor $I:\cD\to\cV$ satisfying the requirements 
\eqref{req_subcat_D1}--\eqref{req_subcat_D3}. In this 
section we will prove that the 
groupoid
$\Map_\fY(I,F)$ is contractible for any \InnaA{$\C$-linear faithful} pre-exact 
functor $F:\cD\to\cA$ into a
$\C$-linear abelian category. This means that a \InnaA{$\C$-linear} pre-exact 
functor 
$F:\cD\to\cA$ extends to a \InnaA{$\C$-linear} pre-exact functor $\cV\to\cA$ in 
an essentially unique
way. We will prove this in two steps. First of all, we will verify that
$\Map_\fY(I,F)$ is nonempty, that is that the functor $F:\cD\to\cA$ extends to
a pre-exact functor $U:\cV\to\cA$. Then we will prove that any two such extensions are connected by a unique isomorphism.

The idea of the construction of $U$ is very simple: we use existence of presentation of an object $X\in\cV$ as an image of $I(f)$, $f:P\to Q$ in $\cD$, to define $U(X)$ as an image of $F(f):F(P)\to F(Q)$.  One should be careful, however, keeping track of the choices involved. 

We will first describe
our bookkeeping device --- the collection of categories of presentations
for each arrow of $\cV$.

\subsection{Categories \texorpdfstring{$\cC_{f,\alpha,\beta,\alpha',\beta'}$}{of diagrams}}

\subsubsection{}
Assign to each map $f:M\to N$ in $\cV$, together with a choice of presentations 
$I(X)\stackrel{\alpha}{\twoheadrightarrow} M\stackrel{\beta}{\hookrightarrow} I(Y)$ 
and 
 $I(X')\stackrel{\alpha'}{\twoheadrightarrow} N\stackrel{\beta'}{\hookrightarrow} I(Y')$
 a category  $\cC_{f,\alpha,\beta,\alpha',\beta'}$ defined as follows.
\begin{itemize}
\item Its objects are
the diagrams 
\begin{equation}
\label{eq:obj-C}
 \xymatrix{&{} &I(X)  \ar@{->>}[r]^{\alpha} &M \ar[dd]^f \ar@{^{(}->}[r]^{\beta} &I(Y) \ar[rd] &{} \\ &I(R) \ar@{->>}[ur] \ar[dr] &{} &{} &{} &I(S) \\ &{} &I(X')  \ar@{->>}[r]_{\alpha'} &N \ar@{^{(}->}[r]_{\beta'} &I(Y') \ar@{^{(}->}[ru] &{} } 
\end{equation}
(pay attention which of the arrows are supposed to be injective and which are surjective!). Such diagram will be usually denoted for simplicity
as  $(R,S)$.
\item An arrow $(R_1,S_1)\to (R_2,S_2)$ is given by a pair of arrows
$R_1\to R_2$ and $S_2\to S_1$ in $\cD$ so that the diagram below is commutative.
\begin{equation}
\label{eq:ar-C}
 \xymatrix{&{} &{} &I(X)  \ar@{->>}[r]^{\alpha} &M \ar[dd]^f \ar@{^{(}->}[r]^{\beta} &I(Y) \ar[rd] \ar[rrd] &{} &{} \\ &I(R_1) \ar[r] \ar@{->>}[urr] \ar[drr] &I(R_2) \ar@{->>}[ur] \ar[dr] &{} &{} &{} &I(S_2) \ar[r] &I(S_1) \\ &{} &{} &I(X')  \ar@{->>}[r]_{\alpha'} &N \ar@{^{(}->}[r]_{\beta'} &I(Y') \ar@{^{(}->}[ru] \ar@{^{(}->}[rru] &{} &{} }.
\end{equation} 
Composition of the arrows is obvious.
\end{itemize} 

\subsubsection{} We will prove later that the categories 
$\cC_{f,\alpha,\beta,\alpha',\beta'}$ are nonempty and have connected nerve,
see Proposition~\ref{prop:C-connected}.

Let us now show how contractibility of the nerves can be used
in constructing the lifting of $F$.

\begin{itemize}
\item
Choose for each object $M\in\cV$ a presentation
$I(X)\stackrel{\alpha}{\twoheadrightarrow} M\stackrel{\beta}{\hookrightarrow} I(Y)$ and define $U(M)$ by a decomposition
$F(X)\stackrel{\alpha}{\twoheadrightarrow} U(M)\stackrel{\beta}{\hookrightarrow} F(Y)$. The object $U(M)$ so defined is defined uniquely up to unique isomorphism.

\item For any  map $f:M\to N$, define $U(f)$ by the diagram
\begin{equation}
\label{eq:Uobj-C}
 \xymatrix{&{} &F(X)  \ar@{->>}[r]^{\alpha} &U(M) \ar[dd]^{U(f)} \ar@{^{(}->}[r]^{\beta} &F(Y) \ar[rd] &{} \\ &F(R) \ar@{->>}[ur] \ar[dr] &{} &{} &{} &F(S) \\ &{} &F(X')  \ar@{->>}[r]_{\alpha'} &U(N) \ar@{^{(}->}[r]_{\beta'} &F(Y') \ar@{^{(}->}[ru] &{} } .
\end{equation}
This is possible as the category
$\cC_{f,\alpha,\beta,\alpha',\beta'}$ is nonempty. The result is independent of the choice of an object 
$(R,S)\in\cC_{f,\alpha,\beta,\alpha',\beta'}$ as the nerve of the category is 
connected and any arrow \eqref{eq:ar-C} in it induces the same map $U(M)\to 
U(N)$.
\end{itemize}

It remains to verify a number of properties of the construction.
It is done in the following subsection.

\subsection{End of the proof}

\subsubsection{\texorpdfstring{$U$}{U} is a functor} We  have to verify that 
$U(g)\circ U(f)=U(g\circ f)$.

Let $(R_1, S_1)$ be an object in the category 
$\mathcal{C}_{f, \alpha, \beta, \alpha', \beta'}$ and let $(R_2, S_2)$ be an
 object in the category $\mathcal{C}_{f, \alpha', \beta', \alpha'', \beta''}$,
see \eqref{eq:composition-C}. 
We then have triples $$ R_1 \rightarrow X' \twoheadleftarrow R_2, \; \; S_2
 \leftarrow Y' \hookrightarrow S_1.$$

Consider the fiber product $I(R_1) \times_{I(X')} I(R_2)$ and the cofiber coproduct \newline
 $I(S_1)\sqcup^{I(Y')} I(S_2)$; the existence of  presentations implies that we can choose objects $R, S$ in $\cD$ with morphisms 
$$ I(R) \twoheadrightarrow I(R_1) \times_{I(X')} I(R_2), \;\; I(S_1) \sqcup^{I(Y')} I(S_2) \hookrightarrow I(S).$$

Since epimorphisms  in $\cA$ are preserved by base change, and monomorphisms
are preserved by cobase change, we obtain the following commutative diagram.
\begin{equation}\label{eq:composition-C}
\xymatrix
{&{} &{} &I(X)  \ar@{->>}[r]^{\alpha} &M \ar[dd]^f \ar@{^{(}->}[r]^{\beta} &I(Y) \ar[rd] &{} &{} \\
 &{} &I(R_1) \ar@{->>}[ur] \ar[dr] &{} &{} &{} &I(S_1) \ar[rd] &{} \\
 &I(R) \ar@{->>}[ru] \ar[rd] &{} &I(X')  \ar@{->>}[r]_{\alpha'} &N \ar[dd]^{g} \ar@{^{(}->}[r]_{\beta'} &I(Y') \ar@{^{(}->}[ru] \ar[rd] &{} &I(S)  \\\ &{} &I(R_2) \ar@{->>}[ur] \ar[dr] &{} &{} &{} &I(S_2) \ar@{^{(}->}[ur] &{} \\ &{} &{} &I(X'')  \ar@{->>}[r]^{\alpha''} &L \ar@{^{(}->}[r]^{\beta''} &I(Y'') \ar@{^{(}->}[ru] &{} &{} }
\end{equation}

This commutative diagram implies that $(R, S)$ is an object in the category $\mathcal{C}_{f, \alpha, \beta, \alpha'', \beta''}$.

Diagram \eqref{eq:composition-C} gives rise to a diagram
\begin{equation}\label{eq:Ucomposition-C}
\xymatrix
{&{} &{} &F(X)  \ar@{->>}[r]^{\alpha} &U(M) \ar[dd]^{U(f)} \ar@{^{(}->}[r]^{\beta} &F(Y) \ar[rd] &{} &{} \\
 &{} &F(R_1) \ar@{->>}[ur] \ar[dr] &{} &{} &{} &F(S_1) \ar[rd] &{} \\
 &F(R) \ar@{->>}[ru] \ar[rd] &{} &F(X')  \ar@{->>}[r]_{\alpha'} &U(N) \ar[dd]^{U(g)} \ar@{^{(}->}[r]_{\beta'} &F(Y') \ar@{^{(}->}[ru] \ar[rd] &{} &F(S)  \\\ &{} &F(R_2) \ar@{->>}[ur] \ar[dr] &{} &{} &{} &F(S_2) \ar@{^{(}->}[ur] &{} \\ &{} &{} &F(X'')  \ar@{->>}[r]^{\alpha''} &U(L) \ar@{^{(}->}[r]^{\beta''} &F(Y'') \ar@{^{(}->}[ru] &{} &{} },
\end{equation}
where $U(f)$ and $U(g)$ are uniquely determined by the condition that they make the diagram commutative. This implies that their composition coincides with $U(gf)$.

\subsubsection{Independence of the choice of presentations}

The functor $U$ constructed above used a choice of presentation 
 $I(X)\stackrel{\alpha}{\twoheadrightarrow}M\stackrel{\beta}{\hookrightarrow}
I(Y)$ and of  decomposition
$F(X)\stackrel{\alpha}{\twoheadrightarrow}U(M)\stackrel{\beta}{\hookrightarrow}
F(Y)$ for each object $M\in\cV$.

Any (other) presentation 
\begin{equation}\label{eq:pres1}
I(X')\stackrel{\alpha'}{\twoheadrightarrow} M\stackrel{\beta'}{\hookrightarrow}
 I(Y')
\end{equation}
 of $M$, and, accordingly, a decomposition 
$ F(X')\stackrel{\alpha'}{\twoheadrightarrow} U'(M)\stackrel{\beta'}
{\hookrightarrow} F(Y'),$
gives rise to an isomorphism $U(M)\to U'(M)$, once more, since the category
$\cC_{\id,\alpha,\beta,\alpha',\beta'}$ has connected nerve.

This allows us to claim that any presentation \eqref{eq:pres1} gives rise
to a unique decomposition 
$$ F(X')\stackrel{\alpha'}{\twoheadrightarrow} U(M)\stackrel{\beta'}
{\hookrightarrow} F(Y').$$

That is, our construction does not depend, up to unique isomorphism, on the choices.

\subsubsection{Isomorphisms \texorpdfstring{$U\circ I(X)\to F(X)$}{}}
Choosing a trivial presentation $I(X)\to I(X)\to I(X)$ for $I(X)$,  we get 
a canonical isomorphism $U(I(X))\to F(X)$.

\subsubsection{Pre-exactness}
We claim that the functor $U$ preserves injective and surjective morphisms.
In fact, if, for instance, $f$ is surjective, one can choose 
$\alpha'=f\circ\alpha$, so that we can choose $R=X'=X$ in the notation 
of \eqref{eq:Uobj-C}. This implies $U(f)$ is surjective,. The dual statment us
similar.

\

We still have to prove  that the nerves of the categories
$\cC_{f,\alpha,\beta,\alpha',\beta'}$ are connected.

\begin{proposition} 
\label{prop:C-connected}
The category $\cC_{f,\alpha,\beta,\alpha',\beta'}$
is nonempty and has a connected nerve.
\end{proposition}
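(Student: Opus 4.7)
My plan is to prove both nonemptiness and connectedness of the nerve of $\cC_{f,\alpha,\beta,\alpha',\beta'}$, using only axioms (1) and (2) together with the abelian structure of $\cV$ (axiom (3) plays no role here).

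For nonemptiness I would construct $R$ and $S$ independently. To produce $R$, form the pullback $P := I(X) \times_N I(X')$ in $\cV$ of the morphisms $f\circ\alpha$ and $\alpha'$. Since $\alpha'$ is epi, the projection $P \twoheadrightarrow I(X)$ is epi as well. By axiom (2), write $P$ as the image of some $I(\phi)$ with $\phi$ a morphism in $\cD$ of source $R$; the resulting epi $I(R) \twoheadrightarrow P$ composes with the two pullback projections to yield morphisms $I(R) \twoheadrightarrow I(X)$ and $I(R) \to I(X')$ in $\cV$, which by axiom (1) descend uniquely to a deflation $R \twoheadrightarrow X$ and an arrow $R \to X'$ in $\cD$. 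The universal property of the pullback supplies exactly the top-half commutativity of (\ref{eq:obj-C}). Dually, form the pushout $Q := I(Y) \sqcup_M I(Y')$, in which $I(Y') \hookrightarrow Q$ is mono; present $Q$ as the image of some $I(\psi)$ with target $S$ so that $Q \hookrightarrow I(S)$ is mono, extract arrows $Y \to S$ and $Y' \hookrightarrow S$ (an inflation) in $\cD$, and use the pushout universal property for the bottom-half commutativity.

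For connectedness I would show that any two objects $(R_1,S_1), (R_2,S_2) \in \cC$ admit a common target, giving a zigzag $(R_1,S_1) \to (\tilde R, \tilde S) \leftarrow (R_2,S_2)$. The candidate is $\tilde R := R_1 \oplus R_2$, $\tilde S := S_1 \oplus S_2$ in $\cD$, equipped with the ``from-sum'' maps $[a_1,a_2] : \tilde R \to X$ and $[b_1,b_2] : \tilde R \to X'$ assembled from the structure maps $a_i : R_i \to X$, $b_i : R_i \to X'$ of $(R_i,S_i)$, and the ``into-sum'' maps $\langle c_1,c_2\rangle : Y \to \tilde S$ and $\langle d_1,d_2\rangle : Y' \to \tilde S$ assembled from $c_i : Y \to S_i$, $d_i : Y' \to S_i$. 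The map $I(\tilde R) \to I(X)$ is epi because its restriction to the $I(R_1)$-summand already is, and dually $I(Y') \to I(\tilde S)$ is mono because its composition with the first projection already is. The commutativity of (\ref{eq:obj-C}) for $(\tilde R,\tilde S)$ is then checked componentwise on the summands of $\tilde R$ (for the two paths $I(\tilde R) \to N$) and on the summands of $\tilde S$ (for the two paths $M \to I(\tilde S)$), in each case reducing to the known commutativity of the individual $(R_i,S_i)$. Finally, the arrows $(R_i,S_i) \to (\tilde R,\tilde S)$ are furnished by the inclusion $\iota_i : R_i \hookrightarrow \tilde R$ paired with the projection $\pi_i : \tilde S \twoheadrightarrow S_i$, and the compatibilities $[a_1,a_2]\circ\iota_i = a_i$, $[b_1,b_2]\circ\iota_i = b_i$, $\pi_i \circ \langle c_1,c_2\rangle = c_i$, $\pi_i \circ \langle d_1,d_2\rangle = d_i$ are immediate.

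The main technical content is concentrated in the nonemptiness step, where one must translate universal constructions in the abelian category $\cV$ back to concrete data in the additive category $\cD$ using the interplay of axioms (1) and (2). Connectedness is then essentially formal, the key observation being that the additive structure of $\cD$ allows one to amalgamate any two objects of $\cC$ into a single direct-sum object equipped with the obvious canonical arrows.
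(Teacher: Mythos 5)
Your proof is correct. The nonemptiness argument coincides with the paper's: form the pullback $I(X)\times_N I(X')$ (which projects epimorphically onto $I(X)$ since $\alpha'$ is epi), cover it via axiom~(2) by an epimorphism from some $I(R)$, and transport the resulting maps back to $\cD$ by axiom~(1); dually, embed the pushout $I(Y)\sqcup^M I(Y')$ into some $I(S)$. Your connectedness argument, however, takes a genuinely different and more elementary route. The paper connects an arbitrary object $(R',S')$ to the distinguished $(R,S)$ built in the nonemptiness step by constructing a common source $(R'',S'')$ covering the further pullback $R\times_{X\times_N X'}R'$ (and embedding into the dual pushout), invoking preservation of epis under base change, monos under cobase change, and one more round of axiom~(2). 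You instead observe that any two objects $(R_1,S_1),(R_2,S_2)$ admit a common \emph{target} $(\tilde R,\tilde S)=(R_1\oplus R_2,\,S_1\oplus S_2)$: since $\cD$ is additive and $I$ is $\C$-linear (hence preserves biproducts), the epi condition for $I(\tilde R)\to I(X)$ and the mono condition for $I(Y')\to I(\tilde S)$ follow from one summand/factor, the commutativity of~(\ref{eq:obj-C}) is checked componentwise via the universal properties of biproducts, and the inclusions $\iota_i$ paired with projections $\pi_i$ satisfy~(\ref{eq:ar-C}) directly. This avoids the secondary pullback/pushout step, makes no reference to the distinguished object from the nonemptiness part, and uses axiom~(2) only once; it is a cleaner argument. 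Your remark that axiom~(3) is not needed also applies to the paper's proof.
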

\begin{proof}
We  will first of all present an explicit construction of an object in
$\cC_{f,\alpha,\beta,\alpha',\beta'}$. Then we will prove that any other object
of this category is connected to it by a zigzag of arrows.

{\sl Existence of an object.} 

Given presentations
$$\begin{CD}I(X)@>\alpha>> M @>\beta>> I(Y)\end{CD}$$
and
$$\begin{CD}I(X')@>\alpha'>> N @>\beta'>> I(Y')\end{CD},$$
choose an epimorphism 
$I(R)\twoheadrightarrow I(X)\times_NI(X')$ and 
a monomorphism $\begin{CD}I(Y)\sqcup^MI(Y') @>\beta>> I(S)\end{CD}.$

It is easy to see that the resulting commutative diagram
  
$$ \xymatrix{&{} &X  \ar@{->>}[r]^{\alpha} &M \ar[dd]^f \ar@{^{(}->}[r]^{\beta} &Y \ar[rd] &{} \\ &R \ar@{->>}[ur] \ar[dr] &{} &{} &{} &S \\ &{} &X'  \ar@{->>}[r]_{\alpha'} &N \ar@{^{(}->}[r]_{\beta'} &Y' \ar@{^{(}->}[ru] &{} } $$ 
belongs to $\cC_{f,\alpha,\beta,\alpha',\beta'}$.

{\sl Connectedness.}

  Consider the object $(R, S)$ in $\mathcal{C}_{f, \alpha, \beta, \alpha', \beta'}$ we have just constructed.

  Let $(R', S')$ be any object in $\mathcal{C}_{f, \alpha, \beta, \alpha', \beta'}$. We will now prove that there exists an object $(R'', S'')$ in $\mathcal{C}_{f, \alpha, \beta, \alpha', \beta'}$ together with arrows $(R'', S'') \rightarrow (R', S')$, $(R'', S'') \rightarrow 
  (R, S)$.
  
By definition of objects in $\mathcal{C}_{f, \alpha, \beta, \alpha', \beta'}$, there are canonical morphisms $$R \longrightarrow X \times_N X' \longleftarrow R', \; \; S' \longleftarrow Y \sqcup_M Y' \longrightarrow S$$ Moreover, the construction of the object $(R, S)$ tells us that $R \rightarrow X \times_N X'$ is an epimorphism and $Y \sqcup_M Y' \longrightarrow S$ is a monomorphism.

We then consider the objects $$ R \times_{X \times_N X'} R' , \; \; S' \sqcup_{Y \sqcup_M Y'} S $$ in $\mathcal{A}$. 
Since fiber products preserve epimorphisms in abelian categories, 
the canonical morphism $$R \times_{X \times_N X'} R' \longrightarrow R'$$ is an epimorphism. For the same (dual) reason the canonical morphism 
$$ S \longrightarrow S' \sqcup_{Y \sqcup_M Y'} S $$ is a monomorphism.

Now fix two objects $R'', S''$ in $\cV$ with arrows 
$$R'' \twoheadrightarrow R \times_{X \times_N X'} R', \; \;  S' \sqcup_{Y \sqcup_M Y'} S \hookrightarrow  S''$$

Thus we obtain the following commutative diagram:
$$ \xymatrix{&{} &R \ar@{->>}[r] \ar[rdd] &X  \ar@{->>}[r]^{\alpha} &M \ar[dd]^f \ar@{^{(}->}[r]^{\beta} &Y \ar[r] \ar[rdd] &S \ar[rd] &{} \\ &R'' \ar[ur] \ar@{->>}[dr] &{} &{} &{} &{} &{} &S'' \\ &{} &R' \ar@{->>}[ruu] \ar[r] &X'  \ar@{->>}[r]_{\alpha'} &N \ar@{^{(}->}[r]_{\beta'} &Y' \ar@{^{(}->}[r] \ar@{^{(}->}[ruu] &S' \ar@{^{(}->}[ru] &{} } $$

It remains to show that $(R'', S'')$ is indeed an object in $\mathcal{C}_{f, \alpha, \beta, \alpha', \beta'}$, i.e. that $R'' \rightarrow X$ is an epimorphism and that $Y' \rightarrow S''$ is a monomorphism.

The construction of $(R'', S'')$ described above implies that we have the following commutative squares:
 $$\xymatrix{ &R'' \ar[r] \ar@{->>}[d] &R \ar@{->>}[d] &{} &Y' \ar@{^{(}->}[r] \ar@{^{(}->}[d] &S \ar[d] \\ &R' \ar@{->>}[r] &X &{} &S' \ar@{^{(}->}[r] &S''}$$
so the above statement clearly holds. This completes the proof.
\end{proof}

\section{Deligne's conjecture}\label{sec:Deligne_conj}
%
%
%
%
%

\subsection{Introduction}
In this section we show that universality of tensor category $\cV_t$
in the sense of Theorem~\ref{Mthm:uni-prop} easily implies positive answer
to Deligne's question \cite[Question (10.18)]{D}.

Let $t \in \C$. 
Let $\mathcal{T}$ be \InnaB{a} tensor category, and let $X$ be an object in $\mathcal{T}$
 of dimension $t$. Consider the category $\rp_{\mathcal{T}}(GL(X), \varepsilon)$
 of $\pi(\mathcal{T})$-equivariant representations of the affine group scheme
 $GL(X)$ in $\mathcal{T}$, see \cite[Section 10.8]{D} or below. This category is a tensor category containing $X$.
 Since $X$ has dimension $t$, it gives rise to a SM functor 
$$F_X:
\cD_t\longrightarrow \rp_{\mathcal{T}}(GL(X), \varepsilon) \;\;\; X_t \mapsto X
$$


\begin{lemma}
 The functor $F_X$ is faithful if and only if the object $X\in\cT$ is not annihilated by any Schur functor.
\end{lemma}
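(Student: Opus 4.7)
The plan is to prove both implications by inspecting the Schur--Weyl decomposition of morphism spaces in $\cD_t$ recalled just above the lemma.

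For the ``only if'' direction I would argue by contrapositive: if $S^\lambda X = 0$ in $\cT$ for some partition $\lambda$, then since Schur functors commute with any SM functor we get $F_X(S^\lambda X_t) = S^\lambda X = 0$ in $\rp_\cT(GL(X),\varepsilon)$. Recall that $S^\lambda X_t$ is a nonzero indecomposable object of $\cD_t$ with $\End_{\cD_t}(S^\lambda X_t)=\C\cdot\id$, so its identity is a nonzero morphism lying in the kernel of $F_X$, showing that $F_X$ is not faithful.

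For the nontrivial ``if'' direction, the plan is first to reduce to a minimal family of Hom spaces. Since $\cD_t$ is the Karoubi envelope of $\cD^\circ_t$, whose objects are the mixed tensor powers $X_t^{p,q}$, it is enough to check faithfulness on $\cD^\circ_t$. The rigidity adjunction (compatible with any SM functor) yields
$$\Hom_{\cD_t}(X_t^{p,q},X_t^{r,s}) \;\cong\; \Hom_{\cD_t}(X_t^{p+s,q+r},\triv),$$
reducing the task to injectivity of $F_X$ on $\Hom_{\cD_t}(X_t^{a,b},\triv)\cong \Hom_{\cD_t}(X_t^{\otimes a},X_t^{\otimes b})$ for all $a,b\ge 0$. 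By the displayed vanishing $\Hom_{\cD_t}(S^\lambda X_t,S^\mu X_t)=0$ for $\lambda\neq\mu$, this Hom space vanishes unless $a=b$, so only the case $a=b=r$ needs attention.

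For $a=b=r$, I would invoke formula~\eqref{eq:decomposition}, which provides an $S_r\times S_r$-equivariant identification
$$\Hom_{\cD_t}(X_t^{r,r},\triv) \;\cong\; \bigoplus_{\lambda\vdash r}(Y_\lambda\otimes Y_\lambda^*)\cdot ev_{S^\lambda X_t^*},$$
in which the $\lambda$-summand carries isotypic type $Y_\lambda\otimes Y_\lambda^*$. Because $F_X$ is $S_r\times S_r$-equivariant, the image of the $\lambda$-summand lands in the $(Y_\lambda\otimes Y_\lambda^*)$-isotypic component of $\Hom_{\rp_\cT(GL(X),\varepsilon)}(X^{r,r},\triv)\cong\End(X^{\otimes r})$; applying Schur--Weyl in the target expresses this component as $Y_\lambda\otimes Y_\lambda^*\otimes\End(S^\lambda X)$, which vanishes iff $S^\lambda X=0$ and otherwise contains the nonzero morphism $ev_{S^\lambda X^*}$ by rigidity. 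Hence the restriction of $F_X$ to the $\lambda$-summand is injective iff $S^\lambda X\neq 0$, and since distinct isotypic components are linearly independent this yields injectivity of $F_X$ on every $\Hom_{\cD_t}(X_t^{r,r},\triv)$ precisely under the hypothesis that every $S^\lambda X$ is nonzero. The main technically delicate point I anticipate is setting up cleanly the $S_r\times S_r$-equivariant Schur--Weyl isotypic decomposition of $\End(X^{\otimes r})$ in the target tensor category and verifying its strict compatibility with $F_X$, so that the separation of $\lambda$-components is genuine rather than an artifact of the source.
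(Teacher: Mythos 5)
Your proof is correct and takes essentially the same route as the paper: reduce via rigidity to the spaces $\Hom_{\cD_t}(X_t^{\otimes r},X_t^{\otimes r})\cong\C[S_r]$, decompose them into matrix blocks indexed by partitions, and observe that the kernel of $F_X$ on such a space is detected block-by-block by the vanishing of $S^\lambda X$. The paper compresses this by saying the Hom space ``is generated by the idempotents $\varepsilon_\lambda$,'' leaving the two-sided-ideal (equivalently, $S_r\times S_r$-equivariance) argument implicit; you spell out exactly that equivariance and apply Schur's lemma to the irreducible $Y_\lambda\otimes Y_\lambda^*$-summands, which is the honest way to see that annihilating $\varepsilon_\lambda$ forces annihilating the whole $\lambda$-block and nothing else. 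The only cosmetic caveat is that ``applying Schur--Weyl in the target'' really just means applying Schur's lemma for the $S_r\times S_r$-action to the decomposition $X^{\otimes r}=\bigoplus_\lambda Y_\lambda\otimes S^\lambda X$, which holds in any Karoubian $\C$-linear SM category; you don't need (and should not claim) that $\End(S^\lambda X)$ is one-dimensional in the target, and indeed you only use $\End(S^\lambda X)\neq 0\Leftrightarrow S^\lambda X\neq 0$ together with the nonvanishing of $ev_{S^\lambda X^*}$, which is exactly right.
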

\begin{proof}
Schur functors are given by idempotents $e_{\lambda} \in \End(X^{\otimes |\lambda|})$ which are images under $F_X$ of the corresponding idempotents $\varepsilon_{\lambda} \in \End(X_t^{\otimes  |\lambda|})$. Thus $$F_X(\varepsilon_{\lambda}) \neq 0 \; \; \Leftrightarrow \;\;S^{\lambda} X \neq 0$$ for any $\lambda$. We need to show that
$$\forall \lambda, F_X(\varepsilon_{\lambda}) \neq 0 \; \; \Leftrightarrow \;\; F_X \text{ is faithful}$$

Indeed, recall that we have canonical isomorphisms 
$$\Hom_{\cD_t}(X_t^{\otimes r} \otimes X_t^{* \otimes s} , X_t^{\otimes r'} \otimes X_t^{* \otimes s'} ) \cong  \Hom_{\cD_t}(X_t^{\otimes r+s'} , X_t^{\otimes r'+s})$$ The latter space is generated by idempotents $\{\varepsilon_{\lambda}\}_{|\lambda|=r+s'}$ if $r+s' = r'+s$, and is zero otherwise (cf. \cite[Section 10]{D}). 
\end{proof}

\begin{theorem}\label{thrm:Deligne_conj}

\mbox{}

\begin{enumerate}[label=(\alph*)]
 \item  If $X$ is not annihilated by any Schur functor then $F_X$ uniquely 
factors through the embedding $I:\cD_t\to\cV_t$ and gives rise to an equivalence of tensor categories
 $$\cV_t\longrightarrow \rp_{\mathcal{T}}(GL(X), \varepsilon)$$
 sending $V_t$ to $X$.
 \item If $X$ is annihilated by some Schur functor then there exists a unique pair  $m, n \in \mathbb{Z}_+$, $m-n=t$, such that $F_X$ factors through the SM functor $\cD_t\longrightarrow \rp(\gl(m|n))$ and gives rise to an equivalence of tensor categories
 $$ \rp(\gl(m|n)) \longrightarrow \rp_{\mathcal{T}}(GL(X), \varepsilon)$$
 sending the standard representation $\mathbb{C}^{m |n}$ to $X$.
\end{enumerate}

\end{theorem}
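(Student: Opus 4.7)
The plan is to derive Theorem \ref{thrm:Deligne_conj} from the universal property of $\cV_t$ proved in Theorem \ref{Mthm:uni-prop}, combined with Deligne's Tannakian analysis of representations of affine group schemes in tensor categories as developed in \cite[\S 10]{D}.

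For part (a), the preceding lemma tells us that $F_X$ is faithful, so Theorem \ref{Mthm:uni-2} yields a unique factorization $F_X = \tilde F_X \circ I$, where $\tilde F_X: \cV_t \to \rp_{\cT}(GL(X), \varepsilon)$ is an exact SM functor sending $V_t \mapsto X$. To show $\tilde F_X$ is an equivalence we use two facts from \cite{D}: every object of $\rp_{\cT}(GL(X), \varepsilon)$ is a subquotient of a direct sum of mixed tensor powers $X^{p,q}$, and the Hom-spaces between the $X^{p,q}$ agree with those between the $X_t^{p,q}$ in $\cD_t$ (the latter being essentially the content of faithfulness of $F_X$, combined with Deligne's identification of morphisms via walled Brauer diagrams). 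From these, fullness of $\tilde F_X$ on the image of $\cD_t$ is immediate, and Proposition \ref{prop:presentation} together with exactness of $\tilde F_X$ propagates fullness to all of $\cV_t$. Essential surjectivity then follows, since any object of $\rp_{\cT}(GL(X),\varepsilon)$ is the image of a morphism between tensor powers of $X$, which by fullness is the image of a morphism in $\cV_t$. Faithfulness follows from exactness: the kernel of $\tilde F_X$ is a tensor ideal, and if it were nonzero it would contain some simple $L_t(\lambda)$; but $L_t(\lambda)$ is a subquotient of some $V_t^{p,q}$ whose endomorphism algebra matches that of $X^{p,q}$, so no such simple can be annihilated.

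For part (b), Deligne's combinatorial description of Schur-functor vanishing shows that if $X$ is annihilated by some Schur functor, then the set of partitions $\lambda$ with $S^\lambda X = 0$ has the form $\{\lambda \mid \lambda_{m+1} \geq n+1\}$ for a unique pair $(m,n) \in \Z_+ \times \Z_+$ (the ``$(m,n)$-hook'' condition), and matching dimensions forces $m - n = t$. This set of Schur idempotents generates the kernel of the canonical quotient $\cD_t \to \rp(\gl(m|n))$, so $F_X$ factors uniquely through this quotient. The induced exact SM functor $\rp(\gl(m|n)) \to \rp_{\cT}(GL(X), \varepsilon)$ sends the standard representation $\C^{m|n}$ to $X$, and an argument parallel to part (a) --- using that both categories are generated by this object via tensor powers and subquotients, and that the corresponding Hom-spaces agree --- shows it is an equivalence.

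The principal obstacle in both parts is establishing the compatibility between Hom-spaces among tensor powers $X^{p,q}$ in $\rp_{\cT}(GL(X), \varepsilon)$ and the corresponding diagram algebras attached to $\cD_t$ (resp.\ $\rp(\gl(m|n))$). This is a core output of Deligne's Tannakian formalism, which we invoke rather than reprove; once it is in hand, the present theorem is a fairly formal consequence of Theorem \ref{Mthm:uni-prop} and Proposition \ref{prop:presentation}.
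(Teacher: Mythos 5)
Your overall plan for part (a) is aligned with the paper: invoke the universal property (Theorem~\ref{Mthm:uni-prop}/\ref{Mthm:uni-2}) to extend $F_X$ to an exact SM functor $\tilde F_X:\cV_t\to\rp_\cT(GL(X),\epsilon)$. But the argument that $\tilde F_X$ is an equivalence is where you diverge from the paper and where gaps appear. You attempt a direct verification of full faithfulness and essential surjectivity, but essential surjectivity requires that every object of $\rp_\cT(GL(X),\epsilon)$ be the image of a morphism between mixed tensor powers of $X$; Deligne's definition only gives that every object is a \emph{subquotient} of a direct sum of tensor powers, and the stronger ``image of a morphism'' statement is precisely the analogue of Proposition~\ref{prop:presentation} for $\rp_\cT(GL(X),\epsilon)$, which you would have to prove independently (proving it via the eventual equivalence with $\cV_t$ would be circular). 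The paper sidesteps this by going through the Tannakian formalism: it shows $\pi(\cV_t)\to GL(V_t)$ is an isomorphism --- a direct consequence of Theorem~\ref{Mthm:uni-2} applied to module categories $\Mod(A)$ --- and then applies Deligne's relative Tannakian reconstruction (Proposition~\ref{prop:Delignes_fund_grps} via Corollary~\ref{lem:equiv_relative_repr_cat}) to deliver the equivalence in one stroke, without ever needing to verify fullness or essential surjectivity by hand.

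Part (b) has a more serious structural gap. You propose to first identify the set of partitions killing $X$ as an $(m,n)$-hook, then factor $F_X$ through the functor $\cD_t\to\rp(\gl(m|n))$, and finally argue the induced functor $\rp(\gl(m|n))\to\rp_\cT(GL(X),\epsilon)$ is an equivalence ``parallel to part (a).'' But the factorization of $F_X$ through the Karoubian functor $\cD_t\to\rp(\gl(m|n))$ only lands you in the full Karoubian subcategory of tilting objects of $\rp(\gl(m|n))$, not the whole abelian category; passing from there to an exact functor defined on all of $\rp(\gl(m|n))$ would require establishing a universal (abelian envelope) property for $\rp(\gl(m|n))$ relative to the quotient of $\cD_t$ --- a substantial separate result you neither cite nor prove. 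Moreover, your claim that the Schur idempotents in the hook generate the kernel of $\cD_t\to\rp(\gl(m|n))$ as a tensor ideal is plausible but nontrivial and is not something you can take for granted. The paper avoids all of this by invoking Deligne's super-Tannakian recognition theorem (\cite[Theorem 0.6]{D2}, fed by \cite[Proposition 0.5(ii)]{D2} to propagate the Schur-vanishing to all subquotients): this produces a super-fiber functor $S:\rp_\cT(GL(X),\epsilon)\to\sVect$, $S(X)\cong\C^{m|n}$, and then Corollary~\ref{lem:equiv_relative_repr_cat} applied to $S$ gives the equivalence with $\rp(\gl(m|n))$ immediately --- no combinatorics of tensor ideals and no abelian envelope property for $\rp(\gl(m|n))$ are needed.
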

A proof will be given in Subsection \ref{ssec:proof_Del_conj}.
\begin{remark}
 Note that for $t \notin \Z$, this theorem was proved by V. Ostrik in
 \cite[Appendix B]{D}, in a similar manner. Our proof relies only on the 
universal property of $\cV_t$, and therefore works for any $t \in \C$.
\end{remark}

\subsection{Algebraic groups in tensor categories and their representations:
reminder}

The content of this subsection is mostly taken from \cite{D1}, Sect. 7.

\subsubsection{}
Let $\cT$ be a tensor category. The category $\Ind\cT$ inherits a symmetric monoidal structure. Algebraic groups in $\cT$ (or $\cT$-algebraic groups) are group objects in the category of $\cT$-affine schemes; thus, these are just commutative Hopf algebra objects in $\Ind\cT$. Yoneda lemma allows one to
identify $\cT$-algebraic groups with the corresponding "functors of points" ---
these are corepresentable functors $\Com(\Ind\cT)\to\Grps$ from commutative algebras in $\Ind\cT$ to groups. A representation $V$ of a $\cT$-algebraic group 
is an object $V\in\cT$ endowed with a structure of left comodule of the appropriate Hopf algebra.

Given a tensor functor $F:\cT\to\cT'$ and a $\cT$-algebraic group $G$, the image
$F(G)$ is obtained by applying the functor $F$ to the corresponding Hopf algebra
object of $\Ind\cT$.

In case the tensor functor $F:\cT\to\cT'$ is right exact, 
the $\cT'$-algebraic group $F(G)$ can be also described in terms of 
the functor of points. Recall that a right exact tensor functor 
$F:\cT\to\cT'$ induces a tensor functor $F:\Ind\cT\to\Ind\cT'$ 
commuting with small colimits. 

By the Adjoint Functor Theorem (see \cite{F}),
$F$ admits a right adjoint functor $F^!:\Ind\cT'\to\Ind\cT$ which is
automatically lax symmetric monoidal 
(see Definition~\ref{dfn:sm-via-multi} and a discussion following it).

Then the $\cT'$-algebraic group $F(G)$ defines a functor
$\Com(\Ind\cT')\to\Grps$ given by the formula
\begin{equation}\label{eq:FofG}
F(G)(B)=G(F^!(B)).
\end{equation}

\subsubsection{}
For an algebra $A\in\Com(\Ind\cT)$ the category of $A$-modules $\Mod_A$ is 
defined in a standard way. The functor $i_A:\cT\to\Mod_A$ carries $X\in\cT$ to $A\otimes X$. 

The fundamental group $\pi(\cT)$ is defined as the $\cT$-algebraic group
defined by the functor of points by the formula
\begin{equation}
A\mapsto \Aut^{\otimes}(i_A:\cT\to\Mod_A).
\end{equation} 

The fundamental group of a tensor category is affine (that is, the functor of points is corepresentable) if, for instance, the base field is perfect and the category is pre-Tannakian (see \cite[Section 8.1]{D1} for definition).

\subsubsection{}
Let $X\in\cT$. Define a functor $\Com(\Ind\cT)\to\Grps$ by the formula
  
$$ A\mapsto\Aut(i_A(X)).$$  

Again, if the base field is perfect and the category is pre-Tannakian, then this functor in corepresentable 
by a $\cT$-algebraic group denoted $GL(X)$. One has an obvious
evaluation map $\epsilon:\pi(\cT)\to GL(X)$ given, on the level of functors of
 points, by the assignment of $\theta(X):i_A(X)\to i_A(X)$ to an
 automorphism $\theta$ of the functor $i_A:\cT\to\Mod_A$.

In particular, the homomorphism $\epsilon$ described above endows any object $X\in\cT$ with a canonical action of $\pi(\cT)$. 

For example, $\pi(\sVect)$ is the group of two elements, with the nontrivial element acting on any super space $V$ by $1$ on its even part and by $-1$ on the odd part.

\subsubsection{Functoriality}
\label{sss:functoriality}
 Given an exact SM functor $F:\cT\to\cT'$
and an object $X\in\cT$, one has a natural homomorphism
\begin{equation}\label{eq:functoriality-gl}
F(GL(X))\to GL(F(X))
\end{equation}
defined by the functors of points as follows. For a fixed
$B\in\Com(\Ind\cT')$ the group $GL(F(X))(B)$ is the automorphism group 
of the free $B$-module $B\otimes F(X)$. The group $F(GL(X))(B)$
is the automorphism group of the free $F^!(B)$-module 
$F^!(B)\otimes X$.
Given $\alpha\in\Aut_{F^!(B)}(F^!(B)\otimes X)$, we can apply $F$ 
to get $F(\alpha)\in\Aut_{FF^!(B)}(FF^!(B)\otimes F(X))$. Making base change along the ring homomorphism $FF^!(B)\to B$, we get an element of $GL(F(X))(B)$.

The homomorphism $F(GL(Z))\to GL(F(Z))$ is an isomorphism
--- this follows from the explicit construction of the Hopf algebras in $\mathcal{T}$ and $\mathcal{T}'$ corresponding to the affine group scheme $GL(Z)$ and $GL(F(Z))$ (see for example \cite{Et}). 

The functor $F:\cT\to\cT'$ induces a map of the respective fundamental groups
\begin{equation}\label{eq:functoriality-pi}
\pi(\cT')\to F(\pi(\cT))
\end{equation}
defined as follows. The algebraic group $F(\pi(\cT))$ defines,
 according to \eqref{eq:FofG}, the functor of points carrying
 $B\in\Com(\Ind\cT')$ to the group $Aut^\otimes(\phi_B)$ where the
 functor $\phi_B:\cT\to\Mod_{F^!(B)}$ is defined by the formula
 $\phi_B(Y)=F^!(B)\otimes Y$. Deligne suggests another functor 
of points
$$ B\mapsto Aut^\otimes(\psi_B),$$
with $\psi_B:\cT\to\Mod_B$ defined by the formula 
$\psi_B(X)=B\otimes F(X)$. One has a canonical morphism
$Aut^\otimes(\phi)\to Aut^\otimes(\psi)$ defined in the same way as 
\eqref{eq:functoriality-gl}. It is proven in \cite[8.6]{D1}, that this map is also an isomorphism.

Taking into account the alternative description of the functor of points for $F(\pi(\cT))$, one defines the map \eqref{eq:functoriality-pi} in terms of functors of points as follows. Denote 
$\phi'_B:\cT'\to\Mod_B$ the functor carrying $Y\in\cT'$ to $B\otimes Y$, so that $\pi(\cT')$ is defined by the tensor automorphisms of $\phi'$. 
Given $B\in\Com(\cT')$ and $\alpha\in Aut^\otimes(\phi'_B)$, 
we construct its image in $Aut^\otimes(\psi)$ by composing $\alpha$ with $F:\cT\to\cT'$.

\subsubsection{}
Given a $\cT$-algebraic group $G$
and a homomorphism $\epsilon:\pi(\cT)\to G$, one defines the tensor
category $\rp_\cT(G,\epsilon)$ as the full subcategory of representations
of $G$ such that the representation of $\pi(\cT)$ obtained via the pullback along $\epsilon$, is the standard one.

In case $\cT=\sVect$ and $G=GL(m|n)$, the category $\rp_\cT(G,\epsilon)$ is precisely
the "small" category $\rp(\gl(m|n))$ defined in Section \ref{ssec:rep_superalgebras}.

\mbox{}

The following result of Deligne  is a relative version of the Tannakian reconstruction (Deligne, \cite[Theorem 8.17]{D1}):
\begin{proposition}\label{prop:Delignes_fund_grps}
 Let $\mathcal{T}, \mathcal{T}'$ be two pre-Tannakian tensor categories and let $F: \mathcal{T} \rightarrow \mathcal{T}'$ be an exact SM functor. 
This functor induces an equivalence
 $\mathcal{T}\rightarrow\rp_{\mathcal{T}'}(F(\pi(\mathcal{T})), \epsilon)$. 
\end{proposition}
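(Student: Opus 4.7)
The plan is to construct a canonical lift $\tilde{F}: \mathcal{T} \to \rp_{\mathcal{T}'}(F(\pi(\mathcal{T})), \epsilon)$ of $F$ and then verify it is an equivalence of tensor categories by establishing full faithfulness and essential surjectivity. The functor $\tilde{F}$ is defined by endowing each $F(X)$ with an action of $F(\pi(\mathcal{T}))$ via the functoriality discussed in Subsection \ref{sss:functoriality}: the tautological action of $\pi(\mathcal{T})$ on $X$ is pushed forward along $F$ using the isomorphism $F(GL(X)) \xrightarrow{\sim} GL(F(X))$ from \eqref{eq:functoriality-gl}, and compatibility with the standard $\pi(\mathcal{T}')$-action through $\epsilon$ follows from the very construction of \eqref{eq:functoriality-pi}.

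For full faithfulness, using rigidity (writing $\Hom(X,Y) = \Hom(\triv, X^\vee \otimes Y)$) the question reduces to showing that for every $Z \in \mathcal{T}$ the natural map
\[
\Hom_{\mathcal{T}}(\triv, Z) \longrightarrow \Hom_{\mathcal{T}'}(\triv, F(Z))^{F(\pi(\mathcal{T}))}
\]
is an isomorphism. The key input is that $\pi(\mathcal{T})$, by its very definition as tensor automorphisms of the identity functor, has the property that a morphism in $\mathcal{T}$ is precisely a $\pi(\mathcal{T})$-equivariant morphism in the ambient $\Ind\mathcal{T}$. Combining this with the $(F, F^!)$ adjunction on ind-categories and the description \eqref{eq:FofG} of the functor of points of $F(\pi(\mathcal{T}))$, the right-hand side unwinds to $\Hom_{\Ind\mathcal{T}}^{\pi(\mathcal{T})}(F^!(\triv), Z)$; the unit $\triv \to F^!(\triv)$ is $\pi(\mathcal{T})$-equivariant and an isomorphism on invariants (this is where the affineness of $\pi(\mathcal{T})$, guaranteed by the pre-Tannakian hypothesis on $\mathcal{T}$, is used), yielding the required identification.

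For essential surjectivity, one uses the regular representation: the commutative Hopf algebra $A \in \Ind\mathcal{T}$ corepresenting $\pi(\mathcal{T})$ pushes forward under $F$ to a commutative Hopf algebra $F(A) \in \Ind\mathcal{T}'$ corepresenting $F(\pi(\mathcal{T}))$. Any object $V \in \rp_{\mathcal{T}'}(F(\pi(\mathcal{T})), \epsilon)$ embeds into $F(A) \otimes V_0$ (for $V_0$ being $V$ with trivial action) by the standard coaction-into-regular embedding; since $F(A) \otimes V_0$ is a direct limit of objects in the essential image of $\tilde{F}$ (as $V_0$ is itself such, being a trivial representation), and since the essential image of $\tilde{F}$ is closed under sub-quotients by exactness of $F$ together with the full faithfulness established above, $V$ lies in the essential image.

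The main obstacle I expect is the full-faithfulness step, specifically the rigorous conversion of $F(\pi(\mathcal{T}))$-equivariance in $\mathcal{T}'$ into a condition inside $\Ind\mathcal{T}$ through the lax monoidal right adjoint $F^!$. Showing that $F^!$ behaves well enough with respect to the Hopf-algebra structures and with respect to $\pi(\mathcal{T})$-invariants is delicate; it relies essentially on the affineness of both fundamental groups, which is why pre-Tannakianity of both $\mathcal{T}$ and $\mathcal{T}'$ is hypothesized rather than mere tensoriality.
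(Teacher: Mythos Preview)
The paper does not supply its own proof of this proposition: it is quoted as a result of Deligne, with a reference to \cite[Theorem 8.17]{D1}. So there is no in-paper argument against which to compare yours.

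Your outline follows the standard strategy of Deligne's original proof---construct the lift, then verify full faithfulness and essential surjectivity---and is broadly correct. One place that would need more care in a full write-up is the adjunction step in full faithfulness: the identification of $\Hom_{\mathcal{T}'}(\triv, F(Z))^{F(\pi(\mathcal{T}))}$ with $\Hom_{\Ind\mathcal{T}}^{\pi(\mathcal{T})}(F^!(\triv), Z)$ is not a direct consequence of the $(F,F^!)$ adjunction, since the adjunction gives $\Hom_{\mathcal{T}'}(F(X),Y)\cong\Hom_{\Ind\mathcal{T}}(X,F^!(Y))$ rather than the direction you need. What actually makes the argument go is a comparison of coaction maps for the Hopf algebras $A$ and $F(A)$, together with exactness of $F$; your phrasing via the unit $\triv\to F^!(\triv)$ obscures this and the claim that this unit is ``an isomorphism on invariants'' is not obviously well-posed as stated. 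The essential surjectivity sketch is fine in spirit, though the assertion that the essential image is closed under subobjects requires knowing that $F$ reflects subobjects, which again uses the full faithfulness already established plus exactness.
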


\subsection{Proof of the Deligne conjecture}\label{ssec:proof_Del_conj}

The following result is an easy consequence of Proposition~\ref{prop:Delignes_fund_grps}.
 
\begin{corollary}\label{lem:equiv_relative_repr_cat}
  Let $\mathcal{T}, \mathcal{T}'$ be two pre-Tannakian tensor categories and let $Z$ be a generating object of $\mathcal{T}$ in the sense that 
the canonical map $\pi(\mathcal{T})\to GL(Z)$ is an isomorphism. 
Let $F: \mathcal{T} \rightarrow \mathcal{T}'$ be an exact SM functor. 
This functor induces an equivalence of tensor categories 
$$F: \mathcal{T} \longrightarrow \rp_{\cT'}(GL(F(Z)), \epsilon).$$ 
\end{corollary}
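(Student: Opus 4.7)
The plan is to reduce this statement to Deligne's relative Tannakian reconstruction (Proposition~\ref{prop:Delignes_fund_grps}) by identifying the fundamental-group-based target category with the one based on $GL(F(Z))$. Concretely, Proposition~\ref{prop:Delignes_fund_grps} immediately yields an equivalence of tensor categories
\[
F : \mathcal{T} \longrightarrow \rp_{\mathcal{T}'}\bigl(F(\pi(\mathcal{T})),\, \epsilon_{\mathcal{T}'}\bigr),
\]
where $\epsilon_{\mathcal{T}'} : \pi(\mathcal{T}') \to F(\pi(\mathcal{T}))$ is the canonical map \eqref{eq:functoriality-pi}. So the task reduces to producing a canonical isomorphism of $\mathcal{T}'$-algebraic groups
\[
F(\pi(\mathcal{T})) \;\xrightarrow{\;\sim\;}\; GL(F(Z))
\]
that identifies $\epsilon_{\mathcal{T}'}$ with the evaluation map $\epsilon : \pi(\mathcal{T}') \to GL(F(Z))$ appearing in the definition of $\rp_{\mathcal{T}'}(GL(F(Z)),\epsilon)$.

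The isomorphism itself will be built by composing two ingredients. First, applying $F$ to the given isomorphism $\pi(\mathcal{T}) \xrightarrow{\sim} GL(Z)$ (i.e.\ applying $F$ to the underlying Hopf algebra objects in $\Ind\mathcal{T}$) yields an isomorphism $F(\pi(\mathcal{T})) \xrightarrow{\sim} F(GL(Z))$ of $\mathcal{T}'$-algebraic groups. Second, subsection~\ref{sss:functoriality} provides a natural isomorphism $F(GL(Z)) \xrightarrow{\sim} GL(F(Z))$. Composing these gives the desired identification.

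The main thing to check is the compatibility of the canonical evaluation maps under this composition, so that the two notions of ``equivariant representation of the group acting on the generator'' agree. This amounts to verifying that the diagram
\[
\begin{CD}
\pi(\mathcal{T}') @>\epsilon_{\mathcal{T}'}>> F(\pi(\mathcal{T})) @>\sim>> F(GL(Z)) \\
@| @. @VV\sim V \\
\pi(\mathcal{T}') @= \pi(\mathcal{T}') @>\epsilon>> GL(F(Z))
\end{CD}
\]
commutes, which can be done directly on functors of points: both routes send an automorphism $\theta \in \Aut^{\otimes}(\phi'_B)$ of the tautological functor $\phi'_B : \mathcal{T}' \to \Mod_B$ to the automorphism of $B \otimes F(Z)$ given by $\theta_{F(Z)}$, as follows from the explicit descriptions in subsection~\ref{sss:functoriality}. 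Once this diagram commutes, pullback along the two isomorphic pairs $(\pi(\mathcal{T}') \to F(\pi(\mathcal{T})))$ and $(\pi(\mathcal{T}') \to GL(F(Z)))$ identify the two categories of equivariant representations, and composition with the equivalence from Proposition~\ref{prop:Delignes_fund_grps} yields the claimed equivalence $F : \mathcal{T} \xrightarrow{\sim} \rp_{\mathcal{T}'}(GL(F(Z)),\epsilon)$.

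The main obstacle — in the sense of the only nontrivial verification rather than a deep difficulty — is the commutativity of the diagram above, i.e.\ checking that the two canonical constructions of a map $\pi(\mathcal{T}') \to GL(F(Z))$ really coincide. Everything else is a formal assembly of Deligne's theorem and the functoriality of $GL(-)$.
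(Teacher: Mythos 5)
Your proposal is correct and follows essentially the same route as the paper's own proof: reduce via Proposition~\ref{prop:Delignes_fund_grps}, build the isomorphism $F(\pi(\mathcal{T})) \to GL(F(Z))$ as $F(\pi(\mathcal{T})) \xrightarrow{\sim} F(GL(Z)) \xrightarrow{\sim} GL(F(Z))$, and verify the $\pi(\mathcal{T}')$-equivariance of this identification by chasing functors of points. The paper's verification of the commutative square is carried out by explicitly inserting the diagonal evaluation map $\epsilon'_Z$ (through the $\psi_B$-description of $F(\pi(\mathcal{T}))$) to cut the diagram into two simpler pieces, whereas you trace both routes directly to the common answer $\theta_{F(Z)}$ — a cosmetic difference, not a substantive one.
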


\begin{proof}[Proof of Corollary \ref{lem:equiv_relative_repr_cat}:]\label{prooflem:equiv_relative_repr_cat}

By Proposition \ref{prop:Delignes_fund_grps}, we only need to prove that there is a $\pi(\mathcal{T}')$-equivariant isomorphism $F(\pi(\mathcal{T})) \cong GL(F(Z))$.


Since $\pi(\cT)\cong GL(Z)$, it remains to verify that the following diagram of affine groups schemes  over $\cT'$ is commutative.

\begin{equation}\label{eq:functoriality-pi-GL}
\xymatrix{  &F(\pi(\mathcal{T})) \ar[r] &F(GL(Z))\ar[d] \\
  &\pi(\mathcal{T}') \ar[u] \ar[r]& GL(F(Z))
  }
\end{equation}

Let us write down the respective functors $\Com(\cT')\to \Grps$. 

We get the diagram where the functors $\phi_B,\ \phi'_B$ and $\psi_B$ 
have the same meaning as in \ref{sss:functoriality}. 

\begin{equation}\label{eq:functoriality-pi-GL-pts} 
\xymatrix{  
&\left(B \mapsto Aut^{\otimes}(\phi_B)\right)\ar[d]^\cong\ar[r]^-{\epsilon_Z}
&\left(B\mapsto Aut_{F^!(B)}(F^!(B)\otimes Z)\right)\ar[dd]\\
&\left(B\mapsto Aut^\otimes(\psi_B)\right)\ar[dr]^{\epsilon'_Z}\\
&\left(B\mapsto Aut^\otimes(\phi'_B)\right)\ar[u]\ar[r]^-{\epsilon_{F(Z)}}
&\left(B\mapsto Aut_B(B\otimes F(Z))\right) 
}
\end{equation}
and the maps $\epsilon_Z,\ \epsilon'_Z$ and $\epsilon_{F(Z)}$ are defined as evaluations
at $Z,\ Z$, and $F(Z)$ respectively.

One easily verifies that the diagonal arrow $\epsilon'_Z$ cuts the diagram into a commutative triangle and a commutative square. This implies the commutativity of the whole diagram.


\end{proof}

\subsubsection{}
\begin{proof}[Proof of Theorem \ref{thrm:Deligne_conj}:]

 \begin{enumerate}[label=(\alph*)]
 \item Assume $X$ is not annihilated by any Schur functor. By 
Theorem~\ref{Mthm:uni-prop}, the functor $F_X$ extends uniquely (up to unique isomorphism) to an exact SM functor $$U_X: \cV_t \longrightarrow \rp_\cT(GL(X),\epsilon). $$
 The statement of Theorem \ref{thrm:Deligne_conj} will follow from Proposition \ref{lem:equiv_relative_repr_cat} once we show that the canonical homomorphism of affine group schemes in $\cV_t$
 $$\pi(\cV_t) \rightarrow GL(V_t)$$
 is an isomorphism.
 
Let us compare the respective functors of points $\Com(\cV_t)\longrightarrow\Grps$.

The functor represented by $\pi(\cV_t)$ 
carries $A\in\Com(\cV_t)$ to the automorphism group of the tensor
functor $i_A:\cV_t\to\Mod_A$.

The functor represented by $GL(V_t)$ carries $A$ to the automorphism group
of $A\otimes V_t$ considered as $A$-module. 

The map $\pi(\cV_t)\to GL(V_t)$ is defined by evaluation of any automorphism of $i_A$ at $V_t\in\cV_t$. This is an isomorphism by universality of $\cV_t$,
see~Theorem~\ref{Mthm:uni-2}: we have an equivalence

$$\Fun^\ex(\cV_t,\Mod(A))\longrightarrow\Fun^\faith(\cD_t,\Mod(A))$$

which yields an isomorphism
$$ \Aut^{\InnaB{\otimes}}(i_A)\longrightarrow\Aut_A(A\otimes V_t).$$
This completes the proof of Part (a).

\item Assume $X$ is annihilated by some Schur functor $S^{\lambda}$.
By \cite[Proposition 0.5(ii)]{D2}, this means that any subquotient of a finite direct 
sum of mixed tensor powers of $X$ is annihilated by some Schur functor.


Recall that the objects of the category $\rp_{\mathcal{T}}(GL(X), \epsilon)$ are
subquotients of direct sums of mixed tensor powers of $X$. 
Therefore, the category $\rp_{\mathcal{T}}(GL(X), \epsilon)$ satisfies the
 conditions of \cite[Theorem 0.6]{D2} and thus is super-Tannakian, i.e. possesses
 a super-fiber functor 
$$S:\rp_{\mathcal{T}}(GL(X), \epsilon) \longrightarrow \sVect.$$

The image of the object $X$ under the super-fiber functor is then isomorphic to the super vector space $\mathbb{C}^{m|n}$ for some $m, n \in \mathbb{Z}$ such that $m-n = t$.

Applying Proposition \ref{lem:equiv_relative_repr_cat} to the super-fiber functor $S$, we obtain an equivalence of categories 
$$\rp_{\mathcal{T}}(GL(X), \epsilon) \stackrel{\sim}{\longrightarrow}
\rp_{\sVect}(GL(m|n), \epsilon)=\rp(\gl(m|n)). $$

\end{enumerate}
This completes the proof of Theorem \ref{thrm:Deligne_conj}.
\end{proof}

We can apply Theorem~\ref{thrm:Deligne_conj} to the following 
construction, due to Deligne (see \cite[Section 10]{D}).

Choose $t_1 \in \C-\Z$. Consider the tensor category 
$\mathcal{T} :=\cD_{t_1} \otimes \cD_{t-t_1}$ and the object 
$X := V_{t_1}\otimes\triv\oplus\triv\otimes V_{t-t_1}$.

%

The object $X$  has dimension $t$, so it gives rise to a tensor functor
$\cD_t\to\cT$. Deligne proves \cite[Conjecture (10.17)]{D1} that this functor is fully faithful. By Theorem~\ref{thrm:Deligne_conj}, we obtain:

\begin{corollary}\label{cor:Deligne_conj1}
 For any $t_1 \in \C-\Z$, there is a unique canonical equivalence
 $$\cV_t\longrightarrow\rp_{\cD_{t_1}\otimes\cD_{t-t_1}}(GL(X), \epsilon)$$
 carrying $V_t$ to $X$.
\end{corollary}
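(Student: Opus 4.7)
The plan is to apply Theorem~\ref{thrm:Deligne_conj}(a) to the pre-Tannakian category $\cT := \cD_{t_1}\otimes \cD_{t-t_1}$ together with the distinguished object $X := V_{t_1}\otimes\triv \oplus \triv\otimes V_{t-t_1}$. First I would observe that since $t_1 \notin \Z$ (and hence $t - t_1 \notin \Z$ as well, given $t \in \Z$), both factors $\cD_{t_1}$ and $\cD_{t-t_1}$ are semisimple tensor categories, so their Deligne tensor product $\cT$ is pre-Tannakian. The object $X$ has dimension $t_1 + (t - t_1) = t$, so it gives rise to a canonical SM functor $F_X: \cD_t \to \cT$ sending the generator $X_t$ of $\cD_t$ to $X$.

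The key external input, which I would invoke next, is Deligne's result cited in the paragraph immediately preceding the corollary (\cite[Conjecture (10.17)]{D1}): the functor $F_X$ is fully faithful. In particular, $F_X$ is faithful, so by the lemma stated right after the definition of $F_X$ in Section~\ref{sec:Deligne_conj} (the one characterizing faithfulness of $F_X$ in terms of non-annihilation by Schur functors), the object $X$ is not annihilated by any Schur functor.

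Having verified the hypothesis of Theorem~\ref{thrm:Deligne_conj}(a), I would then conclude immediately that $F_X$ factors uniquely through the embedding $I:\cD_t \to \cV_t$ and gives rise to an equivalence of tensor categories $\cV_t \stackrel{\sim}{\longrightarrow} \rp_\cT(GL(X),\epsilon)$ carrying $V_t$ to $X$. Uniqueness of the equivalence (up to unique $\otimes$-isomorphism) follows from the universal property of $I:\cD_t \to \cV_t$ in Theorem~\ref{Mthm:uni-prop}: any exact SM functor out of $\cV_t$ to a tensor category is determined, up to unique SM isomorphism, by its restriction to $\cD_t$ along $I$.

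I do not expect a substantive obstacle; the statement is essentially a formal consequence of the already-established Theorem~\ref{thrm:Deligne_conj}(a), and the only nontrivial input that is not proved in the present excerpt is Deligne's full faithfulness result, which is used precisely to unlock the Schur-non-annihilation hypothesis. If one wanted a self-contained verification of that hypothesis (avoiding the citation), the natural approach would be to decompose $S^\lambda X$ using the binomial/plethystic formula for $S^\lambda(A \oplus B)$ in terms of Littlewood--Richardson coefficients and observe that a top summand of the form $S^\lambda V_{t_1} \otimes \triv$ appears; since $\cD_{t_1}$ is semisimple with $t_1 \notin \Z$, this summand is nonzero, hence $S^\lambda X \neq 0$.
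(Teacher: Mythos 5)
Your proof is correct and follows exactly the paper's route: invoke Deligne's full faithfulness of $\cD_t\to\cD_{t_1}\otimes\cD_{t-t_1}$, deduce via the faithfulness lemma that $X$ is not annihilated by any Schur functor, and apply Theorem~\ref{thrm:Deligne_conj}(a). Your closing remark — that one can bypass Deligne's citation by noting $S^\lambda V_{t_1}\boxtimes\triv$ is a nonzero summand of $S^\lambda X$ — is a correct and self-contained alternative way to verify the Schur-non-annihilation hypothesis, though it is not needed given the citation.
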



\section*{Acknowledgements}
We would like to thank P. Etingof and V. Ostrik for stimulating discussions and 
advice, and J. Bernstein for useful discussions. We are also thankful to the
referee, to K. Coulembier \InnaD{and to A. Savage} for helpful discussions and for pointing out numerous typos in the previous versions of the 
manuscript. The first author was supported by the ERC under grant agreement No 669655 (PI: Prof. David Kazhdan). The second author was supported by the grant ISF-446/15. The third
author was supported by NSF grant 1701532.

\end{document}